\documentclass{article}

\bibliographystyle{abbrv}

\usepackage{amsfonts}

\usepackage{amsthm}

\usepackage{amssymb}

\usepackage{amsmath}

\usepackage{enumerate}

\usepackage[all]{xy}

\usepackage{graphicx}

\newcommand{\N}{\mathbb{N}}

\newcommand{\R}{\mathbb{R}}

\newcommand{\C}{\mathbb{C}}

\newcommand{\Ga}{\Gamma}

\newcommand{\sw}{\mathcal{S}}

\newcommand{\wgn}{\widetilde{G_n}}

\newcommand{\ahn}{\mathcal{A}(\mathcal{V}_n)}

\newcommand{\V}{\mathcal{V}}

\newcommand{\Cliff}{\text{Cliff}}

\newcommand{\Hi}{\mathcal{H}_0}

\newcommand{\umtwist}{\prod^{U,max}C^*(P_R(\wgn);\mathcal{A}(\mathcal{V}_n))^{\Ga_n}}

\newcommand{\umk}{\prod^{U,max}C^*(P_R(\wgn);\mathcal{K}(\mathcal{L}^2_n))^{\Ga_n}}

\newcommand{\umlk}{\prod^{U,L,max}C^*(P_R(\wgn);\mathcal{K}(\mathcal{L}^2_n))^{\Ga_n}}

\newcommand{\ulmtwist}{\prod^{U,L,max}C^*(P_R(\wgn);\mathcal{A}(\mathcal{V}_n))^{\Ga_n}}

\newcommand{\uatwist}{\prod^U\C[P_R(\wgn);\ahn]^{\Ga_n}}

\newcommand{\ualtwist}{\prod^{U,L}\C[P_R(\wgn);\ahn]^{\Ga_n}}

\theoremstyle{plain} 

\theoremstyle{plain} 

\theoremstyle{plain} 

\theoremstyle{plain}

\theoremstyle{plain} \newtheorem{maxthe}{Theorem}[section] 

\theoremstyle{remark} \newtheorem{maxgen}[maxthe]{Remark}

\theoremstyle{definition} \newtheorem{uniprod}[maxthe]{Definition}

\theoremstyle{plain} \newtheorem{uniiso}[maxthe]{Theorem}

\theoremstyle{plain} \newtheorem{unises}[maxthe]{Lemma}

\theoremstyle{definition} \newtheorem{locdef}[maxthe]{Definition}

\theoremstyle{plain} \newtheorem{lociso}[maxthe]{Proposition}

\theoremstyle{plain} \newtheorem{locthe}[maxthe]{Theorem}

\theoremstyle{plain} \newtheorem{mvthe}[maxthe]{Theorem}

\theoremstyle{definition} \newtheorem{c*h}[maxthe]{Definition}

\theoremstyle{definition} \newtheorem{suppv}[maxthe]{Definition}

\theoremstyle{definition} \newtheorem{dubyas}[maxthe]{Definition}

\theoremstyle{plain} 

\theoremstyle{definition} \newtheorem{twisted}[maxthe]{Definition}

\theoremstyle{definition} \newtheorem{unitwist}[maxthe]{Definition}

\theoremstyle{plain} \newtheorem{max=red}[maxthe]{Proposition}

\theoremstyle{definition} \newtheorem{supp}[maxthe]{Definition}

\theoremstyle{definition} \newtheorem{restwist}[maxthe]{Definition}

\theoremstyle{definition} \newtheorem{basicp}[maxthe]{Definition}

\theoremstyle{plain} \newtheorem{basiclem}[maxthe]{Lemma}

\theoremstyle{definition} \newtheorem{lipdef}[maxthe]{Definition}

\theoremstyle{definition} \newtheorem{lipisom}[maxthe]{Definition}

\theoremstyle{definition} \newtheorem{liphom}[maxthe]{Definition}

\theoremstyle{plain} \newtheorem{liplem}[maxthe]{Lemma}

\theoremstyle{plain} \newtheorem{simlem}[maxthe]{Lemma}

\theoremstyle{plain} \newtheorem{isolem}[maxthe]{Lemma}

\theoremstyle{plain} \newtheorem{insum}[maxthe]{Lemma}

\theoremstyle{plain} \newtheorem{mvcor}[maxthe]{Corollary}

\theoremstyle{plain} \newtheorem{splitlem}[maxthe]{Lemma}

\theoremstyle{plain} \newtheorem{dddthe}{Theorem}[section]

\theoremstyle{plain} \newtheorem{alphalem}[dddthe]{Lemma}

\theoremstyle{plain} \newtheorem{betalem}[dddthe]{Lemma}

\theoremstyle{plain} \newtheorem{gammalem}[dddthe]{Lemma}

\theoremstyle{plain} \newtheorem{gammaiso}[dddthe]{Lemma}

\theoremstyle{plain} \newtheorem{complem}[dddthe]{Lemma}

\theoremstyle{definition} \newtheorem{geot}{Definition}[section]

\theoremstyle{plain} \newtheorem{geotex}[geot]{Lemma}

\theoremstyle{plain} \newtheorem{geotthe}[geot]{Theorem}

\theoremstyle{plain} \newtheorem{geotc}[geot]{Corollary}

\theoremstyle{remark} \newtheorem{gtprob}[geot]{Problem}

\theoremstyle{remark} \newtheorem{gtprob2}[geot]{Problem}

\theoremstyle{remark} \newtheorem{gtprob3}[geot]{Problem}

\title{Higher index theory for certain expanders and Gromov monster groups II}

\author{Rufus Willett and Guoliang Yu\footnote{Partially supported by the NSF.}}

\date{}

\begin{document}

\maketitle

\begin{abstract}
In this paper, the second of a series of two, we continue the study of higher index theory for expanders.  We prove that if a sequence of graphs has girth tending to infinity, then the maximal coarse Baum-Connes assembly map is an isomorphism for the associated metric space $X$. As discussed in the first paper in this series, this has applications to the Baum-Connes conjecture for `Gromov monster' groups. 

We also introduce a new property, `geometric property (T)'.  For the metric space associated to a sequence of graphs, this property is an obstruction to the maximal coarse assembly map being an isomorphism.  This enables us to distinguish between expanders with girth tending to infinity, and, for example, those constructed from property (T) groups.
\end{abstract}

\tableofcontents

\section{Introduction}
 
This paper is the second of a series of two in which we study higher index theory, in particular, the coarse Baum-Connes conjecture, for spaces of graph with large girth.  The reader can find a complete introduction and definitions and conventions in the first paper in this series, \cite{Willett:2010ud}, particularly Sections 1, 2, 3, and 4.

The main purpose of this second paper is to prove the following theorem.

\begin{maxthe}\label{maxthe}
Assume that $X=\sqcup G_n$ is a space of graphs with large girth and bounded geometry.  Then the maximal coarse assembly map
$$
\mu:\lim_{R\to\infty}K_*(P_R(X))\to K_*(C^*_{max}(X))
$$
is an isomorphism.
\end{maxthe}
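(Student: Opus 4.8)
The plan is to run the ``Dirac--dual-Dirac'' strategy used to prove the coarse Baum--Connes conjecture for spaces admitting a uniform embedding into Hilbert space, but carried out $\Ga_n$-equivariantly on the universal covers $\wgn$ of the graphs $G_n$, uniformly in $n$, and with all completions taken in the maximal norm. The first step is the geometric reduction coming from large girth: for each $R$ there is an $N$ such that for $n\geq N$ the covering map $\wgn\to G_n$ is isometric on balls of radius $2R$, so a finite-propagation operator of propagation at most $R$ on $\ell^2(G_n)\otimes\Hi$ lifts canonically to a $\Ga_n$-invariant operator of the same propagation on $\ell^2(\wgn)\otimes\Hi$, compatibly with sums, products and adjoints. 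Using bounded geometry to keep all matrix-entry estimates uniform in $n$, this yields identifications of $K_*(C^*_{max}(X))$ with $K_*(\umk)$ and of the $K$-theory of the maximal localization algebra of $X$ with $K_*(\umlk)$, intertwining evaluation at zero. Combined with the standard local computation identifying the $K$-theory of the maximal localization side with $\lim_{R\to\infty}K_*(P_R(X))$, the theorem reduces to showing that, for each $R$, evaluation at zero
$$
\umlk\longrightarrow\umk
$$
induces an isomorphism in $K$-theory.

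To prove this I would introduce a twist. Each $\wgn$ is automatically a tree, and the $G_n$ have bounded geometry, so the covers $\wgn$ coarsely embed into Hilbert spaces $\V_n$ with a single modulus of control; this produces the Higson--Kasparov--Trout algebras $\ahn=\mathcal{A}(\V_n)$ and hence the twisted uniform maximal algebras $\umtwist$ and their localized versions $\ulmtwist$. The coarse embeddings determine a Bott homomorphism $\beta$ from the $\mathcal{K}(\mathcal{L}^2_n)$-coefficient algebras to the $\ahn$-coefficient ones intertwining the two evaluation maps, giving a commuting square
$$
\xymatrix{
K_*(\umlk) \ar[r] \ar[d]_\beta & K_*(\umk) \ar[d]^\beta \\
K_*(\ulmtwist) \ar[r] & K_*(\umtwist)
}
$$
I would then prove two things. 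First, the bottom horizontal map is an isomorphism --- a ``twisted maximal coarse Baum--Connes isomorphism'' --- obtained by a coarse Mayer--Vietoris/cutting argument over the trees $\wgn$ (uniformly of asymptotic dimension one), reducing the statement to $0$-dimensional controlled decompositions, where the twisted evaluation map is plainly an isomorphism because $\ahn$ is an inductive limit of algebras built from $\mathcal{S}$ and finite-dimensional Clifford algebras. Second, both vertical maps are isomorphisms: this is infinite-dimensional Bott periodicity applied fibrewise in $n$ and then assembled, the point being that the Bott map, the Dirac map and the homotopies witnessing $\alpha\circ\beta\simeq\mathrm{id}$ are all implemented by asymptotic morphisms with propagation and norm bounds independent of $n$, so that they descend to the uniform products and remain continuous for the maximal norms. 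A diagram chase then shows the top horizontal map is an isomorphism, which is the theorem.

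The main obstacle, throughout, is the behaviour of the maximal completion. The uniform-product construction, the passage to covers, the twist by $\ahn$, and --- most delicately --- the six-term exact sequences invoked in the twisted isomorphism all require that the maximal norms be compatible with the relevant ideals, quotients, inductive limits and tensor decompositions; since maximal Roe algebras and maximal group $C^*$-algebras are not exact in general, verifying these compatibilities in the cases at hand (in particular a suitable exactness statement for maximal uniform products, and agreement of the maximal and reduced completions of the twisted algebras at the $0$-dimensional stage) is where the genuine work lies, and is presumably the purpose of the chain of lemmas preceding this theorem. A second real difficulty is producing a single coarse embedding, Bott map and Bott-periodicity homotopy valid for all the covers $\wgn$ at once with uniform bounds: this is precisely where large girth enters, and it is also the step that breaks down for expanders built from property~(T) groups --- consistently with the ``geometric property~(T)'' obstruction discussed in the introduction.
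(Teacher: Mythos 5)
Your proposal follows the same high-level strategy as the paper: lift to the universal covers $\wgn$ using large girth, pass to localization algebras, twist by the Higson--Kasparov--Trout algebras $\ahn$, prove the twisted evaluation-at-zero isomorphism by a cutting-and-pasting/Mayer--Vietoris argument over the trees, and untwist by the Dirac--dual-Dirac method in infinite dimensions. Those are exactly the ingredients used, and you correctly identify that the uniformity in $n$ of the Julg--Valette embeddings, of the Bott/Dirac morphisms, and of the asymptotic-morphism estimates is what makes the uniform-product argument work.

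However, the first step as you state it has a genuine gap. You assert that the lifting map ``yields identifications of $K_*(C^*_{max}(X))$ with $K_*(\umk)$.'' No such identification exists, and this is precisely where the delicate part of the argument lies. For $n<N_R$ there is no canonical lift of a propagation-$R$ operator on $G_n$ to $\wgn$, so the natural $*$-homomorphism out of $C^*_{max}(X)$ lands not in the uniform product but in the quotient $\prod^{U,max}C^*(\wgn)^{\Ga_n}\big/\oplus C^*_{max}(\wgn)^{\Ga_n}$, and its kernel is $\mathcal{K}(l^2(X,\Hi))$. What one actually has is a short exact sequence
$$
0\to \mathcal{K}(l^2(X,\Hi)) \to C^*_{max}(X)\to \frac{\prod^{U,max} C^*(\wgn)^{\Ga_n}}{\oplus C^*_{max}(\wgn)^{\Ga_n}}\to 0,
$$
together with the (nontrivial) fact that the inclusion of $\mathcal{K}$ induces an injection on $K$-theory. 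The latter is the \emph{only} place the maximal completion is actually required, and it is not automatic --- it is where one invokes the existence of a suitable trace/state and \cite[Proposition 2.10]{Oyono-Oyono:2009ua}. One then must place this extension in a ladder against the analogous short exact sequence of $K$-homology groups of Rips complexes, with the middle map being $\mu$, and apply the five lemma; the bottom rung is the uniform assembly map for the covers, and the top rung degenerates to the $K$-homology of a compact space. Your formulation collapses all of this into an ``identification,'' which erases both the extension and the point of using $C^*_{max}$; an argument written out that way would not close. (A secondary but real slip: $\umk$ already carries the $\mathcal{K}(\mathcal{L}^2_n)$ coefficients built from the Julg--Valette Hilbert spaces, so relating $K_*(C^*_{max}(X))$ to $K_*(\umk)$ additionally requires the $\gamma$-isomorphism of the Dirac--dual-Dirac step; this cannot be absorbed into the lifting step.) Your closing paragraph does gesture at exactness difficulties for the maximal norm, but the missing ingredient is not merely a compatibility statement --- it is the extension itself, its five-lemma ladder, and the $K$-injectivity of the compact ideal.
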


This implies that the maximal coarse Baum-Connes assembly map is an isomorphism for a certain class of expanders; note that we do not need to assume that this sequence of graphs arises as a sequence of quotients of a property ($\tau$) group.  It also has corollaries for the Baum-Connes assembly map with coefficients for Gromov monster groups: see Section 8 in the first paper in this series \cite{Willett:2010ud}.

The remaining part of the paper is taken up with the introduction of a new property, which we call \emph{geometric property (T)}.  Just as property (T) is an obstruction to the maximal Baum-Connes assembly map being an isomorphism (at least without contradicting the Baum-Connes conjecture), geometric property (T) for a space of graphs is an obstruction to the maximal coarse assembly map being an isomorphism. For a space of graphs, the property `geometric property (T)' is strictly stronger than the property `being an expander'.  Moreover, a Margulis-type expander (i.e.\ an expander built as a sequence of quotients of a discrete group $\Ga$) has geometric property (T) if and only if the original group $\Ga$ has property (T).  This new property suggests some interesting questions -- see Section \ref{geotsec} below.

\subsection*{Outline of the piece} 

Section \ref{backsec} lists some of the notation from the first paper in this series \cite{Willett:2010ud}.

Section \ref{maxsec} outlines the proof of Theorem \ref{maxthe} and describes the main ingredient: a version of the maximal Baum-Connes conjecture with uniform propagation control over an infinite sequence of free groups; its proof is based on that of the Baum-Connes conjecture for a-T-menable groups, due to Higson--Kasparov \cite{Higson:2001eb}, and that of the second author of the coarse Baum-Connes conjecture for metric spaces that coarsely embed in Hilbert space \cite{Yu:200ve}.  Section \ref{locsec} replaces the central statement with one involving \emph{localization algebras} \cite{Yu:1997kb}, which make the propagation control that we need somewhat easier to manage.  Section \ref{twistsec} introduces twisted versions of the (equivariant) localization algebras and Roe algebras, and proves a version of our main statement for these twisted algebras.  Section \ref{dddsec} completes the proof by using the Dirac-dual-Dirac method in infinite dimensions of Higson--Kasparov--Trout \cite{Higson:1999be} and Higson--Kasparov \cite{Higson:2001eb} to reduce the main statement to the twisted case.
 
Section \ref{geotsec} (which is rather less technical than the rest of the paper) introduces \emph{geometric property (T)}.  Spaces of graphs with this property generalize the class of Margulis-type expanders built from property (T) groups.  As the maximal coarse Baum-Connes conjecture always fails for such expanders, our results imply that they form a completely distinct class from those expanders with large girth.  We also include some open problems for this class of expanders, which seem to merit further study.

\section{Notation}\label{backsec}
For the reader's convenience, in this section we record the notation from \cite{Willett:2010ud} that we will need, and provide references to definitions. 

\begin{itemize}
\item $\mathcal{H}_0$.  A fixed infinite dimensional, separable, complex Hilbert space.
\item $\mathcal{K}$.  A copy of the $C^*$-algebra of compact operators on $\mathcal{H}_0$.
\item $X=\sqcup G_n$.  A space of graphs \cite[Definition 1.1]{Willett:2010ud}.
\item $\text{prop}(T)$.  The propagation of an operator \cite[Definition 3.2]{Willett:2010ud}. 
\item $\C[X]$, respetviely $C^*_{max}(X)$.  The algebraic Roe algebra, respectively maximal Roe algebra, of a bounded geometry metric space $X$  \cite[Definition 3.2]{Willett:2010ud}.
\item $\C[X]^\Ga$, respetviely $C^*_{max}(X)^\Ga$.  The equivariant algebraic Roe algebra, respectively maximal equivariant Roe algebra, of a bounded geometry metric space $X$ equipped with a free and proper isometric action of a discrete group $\Ga$ \cite[Definition 3.6]{Willett:2010ud}.
\item $P_R(X)$.  The Rips complex of a uniformly discrete metric space $X$ at scale $R$ \cite[Definition 4.3]{Willett:2010ud}.
\item $\mu:\displaystyle{\lim_{R\to\infty}K_*(P_R(X))\to K_*(C^*_{max}(X)}$.  The maximal coarse assembly map associated to a bounded geometry uniformly discrete metric space $X$ \cite[Section 4]{Willett:2010ud}.
\item $\mu_\Ga:\displaystyle{\lim_{R\to\infty}K_*^\Ga(P_R(X))\to K_*(C^*_{max}(X)^\Ga}$. The maximal Baum-Connes assembly map associated to a bounded geometry uniformly discrete metric space $X$ \cite[Section 4]{Willett:2010ud}.
\item $\Delta\in \C[X]$.  The Laplacian on a space of graphs $X$  \cite[Examples 5.3 (i)]{Willett:2010ud}.
\end{itemize}

\section{Strategy for the proof}\label{maxsec}

In this section we outline our strategy for proving Theorem \ref{maxthe} above.  The essential idea is to reduce to a version of the Baum-Connes conjecture for the sequence of universal covers $(\wgn)_{n\in\N}$ of the sequence $(G_n)$ of finite graphs acted on by the sequence of covering groups $(\Ga_n)_{n\in\N}$.  Some of the ideas in this section are based on the work of Oyono-Oyono and the second author \cite{Oyono-Oyono:2009ua}.

\begin{maxgen}\label{maxgen}
The methods used to prove Theorem \ref{maxthe} above could prove a somewhat more general result.  Indeed, let $Y=\sqcup_{n\in\N} Y_n$ be a disjoint union of finite metric spaces, metrized in a similar way to a space of graphs. The essential ingredient is an asymptotically faithful sequence of covers $(\widetilde{Y_n})_{n\in\N}$ of the $Y_n$ such that the covers are of uniformly bounded geometry, and admit equivariant (for the group of deck transformations) coarse embeddings into Hilbert space with uniform distortion control (i.e.\ with the same $\rho_\pm$ as in \cite[Definition 3.4]{Willett:2010ud}).  For example, each $Y_n$ could be a closed manifold of sectional curvature $-1$, such that the dimension of $Y=\sqcup Y_n$ is finite and so that the covering maps from $\widetilde{Y_n}\to Y_n$ from hyperbolic space are asymptotically faithful.  As Theorem \ref{maxthe} seems to cover the most interesting case, however, and to prevent the notation getting out of control, we focus only on spaces as in the statement of Theorem \ref{maxthe}.
\end{maxgen}

Throughout the rest of this section we fix a space of graphs $X=\sqcup G_n$ as in the statement of Theorem \ref{maxthe}, and let $\wgn$ denote the universal cover of $G_n$ and $\Ga_n$ the associated covering group with respect to some fixed choice of basepoint.

The following definition is a maximal version of \cite[Definition 4.3]{Guentner:2008gd}.  

\begin{uniprod}\label{uniprod}
The \emph{algebraic uniform product} of the algebras $\C[\wgn]^{\Ga_n}$ is the $*$-subalgebra of $\prod_n\C[\wgn]^{\Ga_n}$ consisting of sequences $\mathbf{T}=(T^{(0)},T^{(1)},...)$ such that
\begin{enumerate}[(i)]
\item $\sup_{n}\sup_{x,y\in \wgn}\{\|T^{(n)}_{x,y}\|_\mathcal{K}\}$ is finite;
\item $\sup_n\text{prop}(T^{(n)})$ is finite.
\end{enumerate}
Denote this $*$-algebra by $\prod^U\C[\wgn]^{\Ga_n}$.  The \emph{maximal uniform product} of the algebras $C^*_{max}(\wgn)^{\Ga_n}$, denoted
$$
\prod^{U,max}_nC^*(\wgn)^{\Ga_n},
$$
is the completion of $\prod^U\C[\wgn]^{\Ga_n}$ for the norm
$$
\|\mathbf{T}\|_{max}=\sup_n\{\|\pi(\mathbf{T})\|_{\mathcal{B}(\mathcal{H})}~|~\pi:\prod^U\C[\wgn]^{\Ga_n}\to\mathcal{B}(\mathcal{H}) \text{ a $*$-representation}\}
$$
(it is not too hard to use the fact that the spaces $\wgn$ have uniform bounded geometry to show that this norm is finite).
\end{uniprod}
\noindent
Note that the maximal norm on $\prod^U\C[\wgn]^{\Ga_n}$ as defined above does not seem to be the same as the norm $\|(T^{(0}),T^{(1)},...)\|:=\sup_n\|T^{(n)}\|_{max}$; the notation `$\prod^{U,max}C^*(\wgn)^{\Ga_n}$' rather than `$\prod^UC^*_{max}(\wgn)^{\Ga_n}$' is used for this reason.

Note that there is a \emph{uniform assembly map}
\begin{equation}\label{uniassem}
\mu_U:\lim_{R\to\infty}\prod_nK_*^{\Ga_n}(P_R(\wgn))\to K_*(\prod^{U,max}_n C^*(\wgn)^{\Ga_n}),
\end{equation}
defined using the fact that in the individual (maximal) assembly maps 
$$
\mu_{\Gamma_n}:\lim_{R\to\infty}K_*^{\Ga_n}(P_R(X))\to K_*(C^*_{(max)}(\wgn)^{\Ga_n})
$$
one may arrange for the propagation of any $\mu_{\Ga_n}(x)$ to be as small as one wants (of course, all of this makes sense in more generality than our current situation).  The main ingredient in the proof of Theorem \ref{maxthe} is the following result, which is significantly more complicated than any of the ingredients used to prove Theorems 6.1 and 7.1 from the first part of this series \cite{Willett:2010ud}.

\begin{uniiso}\label{uniiso}
The uniform assembly map as in line (\ref{uniassem}) above is an isomorphism.
\end{uniiso}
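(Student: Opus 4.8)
The plan is to carry out the Dirac--dual-Dirac method --- in the infinite-dimensional form of Higson--Kasparov--Trout \cite{Higson:1999be} and Higson--Kasparov \cite{Higson:2001eb}, and in the shape used by the second author for the coarse Baum--Connes conjecture \cite{Yu:200ve} --- but performed \emph{uniformly} over the whole sequence $(\wgn,\Ga_n)_{n\in\N}$, with $n$-independent control of propagation and of the norms of all the homomorphisms and homotopies involved; the bookkeeping follows in part \cite{Oyono-Oyono:2009ua}. The first step (Section \ref{locsec}) is to pass to localization algebras \cite{Yu:1997kb}: one forms the uniform maximal equivariant localization algebra $\umlk$ together with its evaluation homomorphism $\mathrm{ev}\colon\umlk\to\umk$, and checks that, after the standard identifications $\lim_{R\to\infty}\prod_n K_*^{\Ga_n}(P_R(\wgn))\cong K_*(\umlk)$ and $K_*(\prod^{U,max}_nC^*(\wgn)^{\Ga_n})\cong K_*(\umk)$, the map $\mu_U$ becomes $\mathrm{ev}_*$. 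The content here is that the uniform local index map is an isomorphism; this is proved by a Mayer--Vietoris and cutting-and-pasting argument over the Rips complexes $P_R(\wgn)$, which survives the uniformization because the $P_R(\wgn)$ are simplicial complexes of uniformly bounded geometry and uniformly bounded dimension. It therefore suffices to prove that $\mathrm{ev}_*\colon K_*(\umlk)\to K_*(\umk)$ is an isomorphism.

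Next (Section \ref{twistsec}) I would introduce twisted coefficients. Each $\wgn$ is a tree, being the universal cover of the finite graph $G_n$, so it carries a $\Ga_n$-equivariant coarse embedding into a Hilbert space $\V_n$ with distortion functions $\rho_\pm$ independent of $n$. Using the Higson--Kasparov--Trout $C^*$-algebra $\ahn=\mathcal{A}(\V_n)$ of the Euclidean space $\V_n$ (assembled from $\sw$, the Clifford algebras $\Cliff(\V_n)$, and an exhaustion of $\V_n$ by finite-dimensional affine subspaces) one builds the uniform maximal twisted Roe algebra $\umtwist$ and its localization version $\ulmtwist$, related by a twisted evaluation map. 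I would then show that this twisted evaluation map is an isomorphism on $K$-theory. This is a Mayer--Vietoris argument along the exhaustion of the $\V_n$ by finite-dimensional subspaces, reducing to finite-dimensional twisted algebras where the evaluation map is an isomorphism by an explicit Bott homotopy; because $\rho_\pm$ are $n$-independent, all of the decompositions, constants, and homotopies can be chosen uniformly in $n$, so the argument passes to the uniform products and their maximal completions.

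Finally (Section \ref{dddsec}) I would run the Dirac--dual-Dirac argument to reduce the untwisted statement to the twisted one. The equivariant coarse embeddings $\wgn\hookrightarrow\V_n$ produce a ``Dirac'' asymptotic morphism $\alpha$ from the untwisted uniform maximal algebras to the twisted ones, compatible with the two evaluation maps, and the Bott maps produce a ``dual-Dirac'' asymptotic morphism $\beta$ in the opposite direction, also evaluation-compatible; since each $\Ga_n$ is free, hence a-T-menable, infinite-dimensional Bott periodicity shows that $\beta\circ\alpha$ induces the identity on $K$-theory, uniformly in $n$. This exhibits $\mathrm{ev}_*\colon K_*(\umlk)\to K_*(\umk)$ as a retract of the twisted evaluation isomorphism, hence an isomorphism, and so $\mu_U$ is an isomorphism.

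I expect the main obstacle to be the interplay between \emph{maximality} and \emph{uniformity}. The norm defining, say, the completion of $\uatwist$ is the supremum over \emph{all} $*$-representations of the uniform $*$-algebra, which need not coincide with $\sup_n\|\cdot\|_{max}$; so showing that $\alpha$, $\beta$, and --- most delicately --- the homotopy implementing $\beta\circ\alpha\simeq\mathrm{id}$ are bounded for these norms requires controlling arbitrary representations of the uniform products, while at the same time keeping every propagation estimate and every asymptotic-morphism bound independent of $n$. This is precisely the point at which the argument is substantially more involved than either the single-group theorem of Higson--Kasparov or the coarse Baum--Connes theorem of \cite{Yu:200ve}, and it is why Theorem \ref{uniiso} is the central technical input of the paper.
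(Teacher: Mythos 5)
Your proposal is correct and follows the paper's own three-stage route: reformulation via localization algebras and the local assembly isomorphism (Section 4), the twisted evaluation-at-zero isomorphism by a uniform Mayer--Vietoris argument (Section 5), and the Dirac/dual-Dirac retract via the asymptotic morphisms $\alpha$, $\beta$, $\gamma$ of Section 6. The only discrepancies are cosmetic: you swap the roles of $\alpha$ (Dirac) and $\beta$ (Bott/dual-Dirac) relative to the paper, where $\beta$ goes untwisted $\to$ twisted and $\alpha$ goes twisted $\to$ the $\mathcal{K}(\mathcal{L}^2_n)$-coefficient algebra with $\alpha_*\circ\beta_*=\gamma_*$; your step-2 base case is handled in the paper by strong Lipschitz homotopy to points and an Eilenberg swindle rather than a ``Bott homotopy''; and the maximality-vs.-uniformity tension you flag is in fact defused cheaply by the universal property of the maximal norm, the genuine labor being the $n$-independent asymptotic estimates imported from \cite[Section 7]{Yu:200ve}.
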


The proof of this theorem is based on the Dirac-dual-Dirac method in infinite dimensions of Higson--Kasparov--Trout \cite{Higson:1999be} and Higson--Kasparov \cite{Higson:2001eb}, and its adaptation to a coarse geometric setting by the second author \cite{Yu:200ve}.  

There are several necessary preliminaries; before we embark on this, however, we show how Theorem \ref{uniiso} implies Theorem \ref{maxthe}.  The following lemma is the only point in the proof of Theorem \ref{maxthe} that necessitates the use of the \emph{maximal} Roe algebra.  It is similar to \cite[Corollary 2.11]{Oyono-Oyono:2009ua}.

\begin{unises}\label{unises}
There is a natural short exact sequence
$$
0\to \mathcal{K}(l^2(X,\Hi)) \to C^*_{max}(X)\to \frac{\prod^{U,max} C^*(\wgn)^{\Ga_n}}{\oplus C^*_{max}(\wgn)^{\Ga_n}}\to 0
$$
such that the inclusion $\mathcal{K}(l^2(X,\Hi))\to C^*_{max}(X)$ induces an injection on $K$-theory.
\end{unises}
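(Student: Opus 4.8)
The plan is to construct the short exact sequence by analyzing the structure of an operator $T\in C^*_{max}(X)$ in terms of its behavior on the universal covers. First I would fix, for each $n$, an identification of a fundamental domain for the $\Ga_n$-action on $\wgn$ with $G_n$, so that $l^2(\wgn,\Hi)\cong l^2(G_n,\Hi)\otimes l^2(\Ga_n)$ equivariantly. The key geometric input is that $X$ has large girth: this means that for any fixed propagation $S$, once $n$ is large enough the $S$-ball around any point of $G_n$ is isometric to the corresponding ball in $\wgn$ (asymptotic faithfulness of the covers). Consequently an operator $T^{(n)}\in\C[G_n]$ of propagation at most $S$ lifts, for $n$ large, to a $\Ga_n$-invariant operator $\widetilde{T^{(n)}}\in\C[\wgn]^{\Ga_n}$ of the same propagation and the same matrix-entry norms; for the finitely many small $n$ one simply sets the lift to be $0$. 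This gives a map on the level of algebraic Roe algebras $\C[X]\to \prod^U\C[\wgn]^{\Ga_n}/\oplus\C[\wgn]^{\Ga_n}$, and one checks it is a well-defined $*$-homomorphism (the discrepancy at small $n$ and the finitely many ``bad'' entries at each fixed scale land in the direct sum).

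The next step is to identify the kernel and show the map is surjective at the algebraic level, then pass to completions. An operator $T=(T^{(n)})\in\C[X]$ lies in the kernel of the induced map precisely when each lifted $\widetilde{T^{(n)}}$ eventually vanishes, which — using that $\Ga_n$ acts freely so that the only $\Ga_n$-invariant operators on $\wgn$ supported near the diagonal that ``see nothing'' on the cover are those supported on the fundamental domain itself — forces $T^{(n)}$ to be supported in a region of $G_n$ of diameter bounded independently of $n$; summing over $n$, such $T$ is locally compact and of bounded support in the appropriate sense, hence lies in $\mathcal{K}(l^2(X,\Hi))$. Surjectivity follows because any uniformly-bounded-propagation sequence in $\prod^U\C[\wgn]^{\Ga_n}$ can be pushed down to $G_n$ (again via asymptotic faithfulness, modulo terms in the direct sum) to produce a preimage in $\C[X]$. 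One then completes: the maximal norm on $\C[X]$ is by definition the sup over $*$-representations, and the same is true for $\prod^{U,max}C^*(\wgn)^{\Ga_n}$, so the algebraic surjection extends to a surjection of the maximal completions; the quotient by $\oplus C^*_{max}(\wgn)^{\Ga_n}$ is handled as in \cite[Corollary 2.11]{Oyono-Oyono:2009ua}. This exhibits $\mathcal{K}(l^2(X,\Hi))$ as the kernel of a surjection $C^*_{max}(X)\to \prod^{U,max}C^*(\wgn)^{\Ga_n}/\oplus C^*_{max}(\wgn)^{\Ga_n}$.

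Finally I would address the $K$-theory injectivity claim. The inclusion $\mathcal{K}(l^2(X,\Hi))\hookrightarrow C^*_{max}(X)$ is split on $K$-theory: one produces a (completely positive, or at least $K$-theoretically well-behaved) left inverse by composing with the canonical quotient $C^*_{max}(X)\to C^*(X)$ to the reduced Roe algebra followed by the standard argument that the compacts are a direct summand in $K$-theory of $C^*(X)$ — indeed in the reduced Roe algebra the ideal $\mathcal{K}$ of locally compact, bounded-support operators is complemented by a corner argument / the existence of a rank-one projection $p$ over a single point with $pC^*(X)p\cong\mathcal{K}$. Since the composite $\mathcal{K}\to C^*_{max}(X)\to C^*(X)$ induces an injection (in fact an isomorphism onto a direct summand) on $K$-theory, the first map must already be injective on $K$-theory.

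The main obstacle is the verification that the algebraic quotient map is well defined and compatible with the maximal norms — i.e.\ controlling, uniformly in $n$, the discrepancy between $T^{(n)}$ and its lift $\widetilde{T^{(n)}}$ at a fixed propagation scale, and checking that passing to the $\prod^{U,max}$-completion does not enlarge the kernel beyond $\mathcal{K}(l^2(X,\Hi))$. This is exactly where the large-girth hypothesis (asymptotic faithfulness of the covers) must be used quantitatively, and where the choice of the maximal rather than reduced norm is essential, since the reduced norm on $C^*(X)$ need not be recoverable from the reduced norms of the $C^*(\wgn)^{\Ga_n}$.
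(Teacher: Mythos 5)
Your overall route matches the paper's: use asymptotic faithfulness of the covers (large girth) to pass between $\C[X]$ and $\prod^U\C[\wgn]^{\Ga_n}/\oplus\C[\wgn]^{\Ga_n}$, then use the universal property of the maximal norm to deal with completions, and cite Oyono-Oyono--Yu for the $K$-theoretic injectivity.  However, there is a gap in your treatment of the kernel.  You correctly identify the obstacle -- "checking that passing to the $\prod^{U,max}$-completion does not enlarge the kernel beyond $\mathcal{K}(l^2(X,\Hi))$" -- but you do not resolve it; saying the maximal norm is "essential" here is an observation, not an argument.  Knowing the \emph{algebraic} kernel (eventually-zero sequences, dense in $\mathcal{K}$) does not by itself bound the kernel of the extension to the completion.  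The paper's proof handles this cleanly by constructing an explicit $*$-homomorphism $\psi$ in the reverse direction (pushing operators from the covers down to $X$ using $(n,R)$-goodness), extending $\psi$ to the $C^*$-completion via the universal property, and checking that $\psi$ is a two-sided inverse to $\phi$ on a dense subalgebra, hence everywhere.  Once $\phi$ has a two-sided inverse at the $C^*$-level, the kernel of the original $\phi:C^*_{max}(X)\to\prod^{U,max}/\oplus$ is exactly $\mathcal{K}(l^2(X,\Hi))$ with no further analysis needed.  Your proposal never mentions this inverse, and your kernel analysis on its own does not close the gap.

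On the $K$-theory injectivity, the paper simply cites \cite[Proposition 2.10]{Oyono-Oyono:2009ua}, whereas you propose an independent argument via the quotient to the reduced Roe algebra $C^*(X)$.  The idea is reasonable, but the corner argument you sketch is not right as stated: if $p$ is a genuine rank-one projection over a single point then $pC^*(X)p\cong\C$, not $\mathcal{K}$; if you instead take $p=e_{x,x}\otimes 1_{\Hi}$ so that $pC^*(X)p\cong\mathcal{K}(\Hi)$, then $p$ lies in the multiplier algebra rather than in $C^*(X)$, and moreover compression by $p$ is not a $*$-homomorphism, so "complemented by a corner" does not directly produce a splitting on $K$-theory.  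One can make the reduced-algebra route work (the injectivity of $\mathcal{K}\to C^*(X)$ on $K$-theory is standard), but it requires a more careful argument than the one you give; the paper sidesteps this by citing the relevant proposition from the reference that its whole construction is modeled on.
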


\begin{proof}
Note first that the $C^*$-algebra $\oplus C^*_{max}(\wgn)^{\Ga_n}$ is an ideal in $\prod^{U,max} C^*(\wgn)^{\Ga_n}$, whence the right hand side of the short exact sequence makes sense.

By abuse of notation, there is a $*$-homomorphism 
$$\phi:C^*_{max}(X)\to\frac{\prod^{U,max} C^*(\wgn)^{\Ga_n}}{\oplus C^*_{max}(\wgn)^{\Ga_n}}$$
defined in just the same way as the $*$-homomorphism $\phi$ in \cite[Corollary 3.9]{Willett:2010ud}; moreover, $\mathcal{K}(l^2(X,\Hi))$ (which identifies naturally with an ideal in $C^*_{max}(X)$) is clearly in the kernel of  this map, whence $\phi$ descends to a $*$-homomorphism
$$
\phi:\frac{C^*_{max}(X)}{\mathcal{K}(l^2(X,\Hi))}\to \frac{\prod^{U,max} C^*(\wgn)^{\Ga_n}}{\oplus C^*_{max}(\wgn)^{\Ga_n}}.
$$
It thus suffices for the first part to define an inverse to this map $\phi$. 

Say then that $\mathbf{T}=(T^{(0)},T^{(1)},...)$ is an element of $\prod^U \C[\wgn]^{\Ga_n}$ such that $\text{prop}(T^{(n)})\leq R$ for all $n$.  Let $N$ be large enough so that for all $n\geq N_R$, the covering map $\pi_n:\wgn\to G_n$ is a $2R$-metric cover.  Call a pair $(x,y)\in X\times X$ \emph{$(n,R)$-good} if $n\geq N_R$, $x,y\in G_n$ and if there exist $\tilde{x},\tilde{y}\in \wgn$ such that $\pi_n(\tilde{x})=x,\pi_n(\tilde{y})=y$ and $d(\tilde{x},\tilde{y})\leq R$.
Define an element $\psi(\mathbf{T})\in \C[X]$ by the matrix coefficient formula
$$
\psi(\mathbf{T})_{x,y}:= \left\{\begin{array}{ll} T^{(n)}_{\tilde{x},\tilde{y}} & (x,y) \text{ is $(n,R)$-good} \\ 0 & \text{ otherwise}\end{array}\right.
$$
($\Ga_n$-equivariance of each $T^{(n)}$ and the $2R$-metric cover property implies that $\psi(\mathbf{T})_{x,y}$ does not depend on the choice of $\tilde{x},\tilde{y}$).  Define a map
$$
\psi:\frac{\prod^U \C[\wgn]^{\Ga_n}}{\oplus  \C[\wgn]^{\Ga_n}} \to \frac{\C[X]}{\mathcal{K}(l^2(X,\Hi))\cap \C[X]}
$$
using the formula above on operators of propagation at most $R$ for each $R>0$; it is not hard to check that this is a $*$-homomorphism.  Using the universal property of the norm on the left hand side, $\psi$ extends to a $*$-homomorphism
$$
\psi:\frac{\prod^{U,max} C^*(\wgn)^{\Ga_n}}{\oplus C^*_{max}(\wgn)^{\Ga_n}}\to \frac{C^*_{max}(X)}{\mathcal{K}(l^2(X,\Hi)},
$$
and it is not hard to check that it defines an inverse to $\phi$ on the algebraic level, whence also an inverse to $\phi$ on the $C^*$-algebraic closure.

The $K$-theoretic statement follows from \cite[Proposition 2.10]{Oyono-Oyono:2009ua}.
\end{proof}

\begin{proof}[Proof of Theorem \ref{maxthe}, assuming Theorem \ref{uniiso}]
Consider the diagram below, a close analogue of the diagram from \cite[line (8)]{Willett:2010ud}
$$
\xymatrix{ 0 \ar[d]& 0 \ar[d] \\ 
K_*(P_R(X_{N_R}))\oplus\oplus_{n\geq N_R}K_*(P_R(G_n)) \ar[d] \ar[r]& K_*(\mathcal{K}) \ar[d] \\ 
K_*(P_R(X)) \ar[d] \ar[r] & K_*(C_{max}^*(X)) \ar[d]^{\phi_*} \\
\frac{\prod K_*^{\Gamma_n}(P_R(\wgn))}{\oplus K_*^{\Gamma_n}(P_R(\wgn))} \ar[r] \ar[d] & K_*\Big(\frac{\prod^{U,max} C^*(\wgn)^{\Gamma_n}}{\oplus C_{max}^*(\wgn)^{\Gamma_n})}\Big) \ar[d] \\ 0 & 0}. 
$$
The diagram commutes, and both the left- and right hand sides are short exact sequences, the latter using Lemma \ref{unises}.  The bottom horizontal arrow is an isomorphism as $R\to\infty$ by Theorem \ref{uniiso}, the fact that the short exact sequence 
$$
0\to \oplus C_{max}^*(\wgn)^{\Gamma_n} \to \prod^{U,max} C^*(\wgn)^{\Gamma_n}\to \frac{\prod^{U,max} C^*(\wgn)^{\Gamma_n}}{\oplus C_{max}^*(\wgn)^{\Gamma_n}}\to 0
$$
gives rise to a degenerate six-term exact sequence on the level of $K$-theory, and the fact that the assembly maps 
$$
\mu_{\Gamma_n}:\lim_{R\to\infty}K_*^{\Gamma_n}(P_R(\wgn))\to K_*(C^*_{max}(\wgn)^{\Gamma_n})
$$
(which all identify with the maximal Baum-Connes assembly map for the free group $\Gamma_n$) are isomorphisms.  Finally, the top horizontal arrow is an isomorphism as $R\to\infty$, as the left hand side degenerates to being the $K$-homology of a single compact space.  The central arrow is thus an isomorphism by the five lemma, and we are done.
\end{proof}

The next three sections are devoted to the proof of Theorem \ref{uniiso}.

\section{Reformulation in terms of localization algebras}\label{locsec}

The aim of this subsection is to define a \emph{localization algebra}, an equivariant version of the machinery developed by the second author in \cite{Yu:1997kb}, and relate it to Theorem \ref{uniiso} above.  

\begin{locdef}\label{locdef}
We denote by $\prod^{U,L}\C[P_R(\wgn)]^{\Ga_n}$ the $*$-algebra of all bounded and uniformly continuous (for the norm $\|\cdot\|_{max}$) maps $\mathbf{f}$ from $[0,\infty)$ into $\prod^U\C[P_R(\wgn)]^{\Ga_n}$ such that  
$$
\text{prop}(\mathbf{f}(t))\to 0 ~~\text{ as }~~t\to\infty,
$$
where of course if we write $\mathbf{f}(t)=(f^{(0)}(t),f^{(1)}(t),...)$, then 
$$\text{prop}(\mathbf{f}(t)):=\sup_n\text{prop}(f^{(n)}(t)).$$
The \emph{localization algebra}, denoted $\prod^{U,L,max}C^*(P_R(\wgn))^{\Gamma_n}$, is the completion of $\prod^{U,L}\C[P_R(\wgn)]^{\Ga_n}$ for the norm $\|\mathbf{f}\|_{max}:=\sup_{t\in[0,\infty)}\|\mathbf{f}(t)\|_{max}$.
\end{locdef}

Note that the localization algebra depends on \emph{both} the local and large-scale structure of the spaces $P_R(\wgn)$; cf. \cite[Remark 3.3]{Willett:2010ud}.

Now, there is an assembly map
\begin{equation}\label{locassem}
\mu_L:\lim_{R\to\infty}\prod K_*(P_R(\wgn))\to \lim_{R\to\infty}K_*(\prod^{U,L,max}C^*(P_R(\wgn))^{\Gamma_n});
\end{equation}
defined by taking an operator $F=\prod F^{(n)}$ representing a cycle on the left hand side, building a sequence of operators $(F_m)_{m\in\N}$ from it with propagation tending to zero,  interpolating between them, and then taking a $K$-theoretic index (as in \cite[Definition 4.2]{Willett:2010ud}) of the resulting operator.

The proof of the following proposition uses Lipschitz homotopy invariance and a Mayer-Vietoris argument; as it is essentially the same as the argument in the non-equivariant case from \cite{Yu:1997kb}, the proof is omitted.

\begin{lociso}\label{lociso}
The local assembly map as in line (\ref{locassem}) above is an isomorphism. \qed
\end{lociso}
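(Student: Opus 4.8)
The plan is to adapt to the present equivariant and uniform setting the proof of the analogous non-equivariant statement in \cite{Yu:1997kb}. The core of that argument is that, for each fixed $R$, both sides of (\ref{locassem}) behave like ``controlled homology theories'' in the sequence $(\wgn)$ of $\Gamma_n$-simplicial complexes: an equivariant decomposition $\wgn=A_n\cup B_n$ into closed subcomplexes produces a Mayer--Vietoris six-term exact sequence (with all propagation bookkeeping uniform in $n$), and both sides are invariant under sequences of equivariant Lipschitz homotopies whose Lipschitz constants are bounded uniformly in $n$; moreover $\mu_L$ is a natural transformation compatible with both structures. Granting this, one runs a dimension induction over the skeleta of the $P_R(\wgn)$. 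Since $X$ has bounded geometry, the cardinality of an $R$-ball in $\wgn$ is bounded uniformly in $n$, so $d(R):=\sup_n\dim P_R(\wgn)<\infty$ and the induction terminates after $d(R)+1$ steps; the base case is the $0$-skeleton (the vertex set $\wgn$), where one checks directly, exactly as in \cite{Yu:1997kb}, that $\mu_L$ is an isomorphism. Letting $R\to\infty$ then gives the proposition.

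First I would fix, uniformly in $n$, the auxiliary data that make the uniform localization algebras functorial: for a sequence of equivariant simplicial maps between (equivariant subcomplexes of) the $P_R(\wgn)$, one chooses equivariant covering isometries of the ambient Hilbert modules with $n$-independent control, inducing a $*$-homomorphism of uniform localization algebras and hence a $K$-theory map compatible with $\mu_L$. Then I would prove uniform Lipschitz homotopy invariance: if $(\phi_{n,s})_{s\in[0,1]}$ is an equivariant homotopy with $\sup_{n,s}\mathrm{Lip}(\phi_{n,s})<\infty$, the two endpoint maps on the $K$-theory of the uniform localization algebras coincide. The argument is the usual one --- the homotopy is used to interpolate between the two induced operator families while keeping their propagations small --- and the only new point is to verify that each cutoff function, partition of unity, and operator perturbation occurring in \cite[\S 3]{Yu:1997kb} can be chosen with constants independent of $n$; uniform bounded geometry of the $\wgn$ is exactly what permits this.

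Next I would set up the Mayer--Vietoris sequence. For equivariant closed subcomplexes $A_n,B_n$ with $A_n\cup B_n=P_R(\wgn)$, let the ``part supported near $A_n$'' be the closed ideal of those $\mathbf{f}$ with $\mathrm{supp}(f^{(n)}(t))$ contained in the $\varepsilon(t)$-neighbourhood of $A_n$ for some $\varepsilon(t)\to 0$ that is uniform in $n$. These ideals for $A_n$ and for $B_n$ densely span the whole uniform localization algebra and intersect in the ideal for $A_n\cap B_n$, compatibly with the corresponding statements for the $K$-homology of the pieces; the five lemma then delivers a Mayer--Vietoris sequence intertwining the two sides via $\mu_L$. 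The dimension induction decomposes the $k$-skeleton into an equivariant open neighbourhood of the $(k-1)$-skeleton --- which equivariantly Lipschitz-deformation-retracts onto the $(k-1)$-skeleton with uniform constants --- and the union of the open $k$-simplices --- an equivariant disjoint union of Lipschitz-contractible pieces, hence reducing to the $0$-dimensional case --- and then applies Mayer--Vietoris, homotopy invariance, and the inductive hypothesis.

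The principal obstacle throughout is the uniformity in $n$: every construction borrowed from \cite{Yu:1997kb} --- the covering isometries realizing functoriality, the Lipschitz retractions near skeleta, the cutoffs in the Mayer--Vietoris decomposition, and the propagation-controlling perturbations of operators --- must be carried out with constants independent of $n$, so that it defines an operation on the uniform product rather than merely on each factor. This is precisely what bounded geometry of $X$ supplies: uniformly bounded vertex degrees, hence uniformly bounded local combinatorics and uniformly bounded dimension of the $P_R(\wgn)$. Once the uniform choices are in place, the estimates of \cite{Yu:1997kb} apply essentially verbatim, which is why the detailed proof is omitted.
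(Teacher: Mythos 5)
Your proposal reconstructs exactly the argument the paper has in mind: the paper explicitly remarks that the proof "uses Lipschitz homotopy invariance and a Mayer--Vietoris argument" and is "essentially the same as the argument in the non-equivariant case from \cite{Yu:1997kb}," then omits it. Your version spells out the same route --- uniform covering isometries, uniform Lipschitz homotopy invariance, Mayer--Vietoris, induction on skeleta with $d(R)<\infty$ from bounded geometry --- with the correct observation that uniform bounded geometry of the $\wgn$ is what makes all the constants $n$-independent.
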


Note that $\prod^{U,L,max}C^*(P_R(\wgn))^{\Gamma_n}$ is equipped with an `evaluation-at-zero' $*$-homomorphism
\begin{equation}\label{eval}
e:\prod^{U,L,max}C^*(P_R(\wgn))^{\Gamma_n}\to \prod^{U,max}C^*(P_R(\wgn))^{\Gamma_n}
\end{equation}
defined in the obvious way.  Moreover, these evaluation maps pass to the direct limit as $R$ tends to infinity, and fit into a diagram
$$
\xymatrix{\lim_{R\to\infty}\prod K_*^{\Ga_n}(P_R(\wgn)) \ar[d]^{\mu_U} \ar[r]^(.42){\mu_L} & \lim_{R\to\infty}K_*(\prod^{U,L,max}C^*(P_R(\wgn))^{\Gamma_n}) \ar[d]^{e_*} \\  K_*(\prod^{U,max}C^*(\wgn)^{\Gamma_n}) \ar[r]^(.42){\cong}  & \lim_{R\to\infty} K_*(\prod^{U,max}C^*(P_R(\wgn))^{\Gamma_n})},
$$
here we have used the existence of non-canonical isomorphisms
$$
\prod^{U,max}C^*(P_R(\wgn))^{\Gamma_n}\cong \prod^{U,max}C^*(\wgn)^{\Gamma_n}
$$
which induce canonical isomorphisms on $K$-theory -- the idea here is the same as that behind \cite[Lemma 3.7]{Willett:2010ud} -- to produce the isomorphism in the bottom row; this diagram clearly commutes by definition of all the maps involved. The following corollary, which we state as a theorem, is thus immediate.

\begin{locthe}\label{locthe}
The evaluation at zero map
$$
e:\lim_{R\to\infty}\prod^{U,L,max}C^*(P_R(\wgn))^{\Gamma_n}\to \lim_{R\to\infty}\prod^{U,max}C^*(P_R(\wgn))^{\Gamma_n}
$$
as in line (\ref{eval}) induces an isomorphism on $K$-theory if and only if the uniform assembly map
$$
\mu_U:\lim_{R\to\infty}\prod_nK_*^{\Ga_n}(P_R(\wgn))\to K_*(\prod^{U,max}_n C^*(\wgn)^{\Ga_n})
$$
from line (\ref{uniassem}) above is an isomorphism. \qed
\end{locthe}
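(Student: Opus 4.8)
The plan is to read the statement directly off the commutative square displayed immediately before it. Write $A_R=\prod^{U,L,max}C^*(P_R(\wgn))^{\Gamma_n}$ and $B_R=\prod^{U,max}C^*(P_R(\wgn))^{\Gamma_n}$, so that the map in the statement is the direct limit as $R\to\infty$ of the evaluation-at-zero $*$-homomorphisms of line (\ref{eval}). Since $K$-theory commutes with inductive limits of $C^*$-algebras, the induced map on $K$-theory is exactly the right-hand vertical arrow $e_*$ of that square, namely $\lim_{R\to\infty}K_*(A_R)\to\lim_{R\to\infty}K_*(B_R)$. Thus it suffices to show that $e_*$ in the square is an isomorphism if and only if $\mu_U$ is.

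First I would record that the square commutes; this is immediate from the constructions of $\mu_U$, $\mu_L$, $e$, and the horizontal isomorphism, which are all built from the same cycles and the same propagation-shrinking and interpolation procedure. Second, by Proposition \ref{lociso} the top horizontal arrow $\mu_L$ is an isomorphism. Third, the bottom horizontal arrow is an isomorphism: it is assembled from the (non-canonical) $*$-isomorphisms $\prod^{U,max}C^*(P_R(\wgn))^{\Gamma_n}\cong\prod^{U,max}C^*(\wgn)^{\Gamma_n}$, and --- exactly as in \cite[Lemma 3.7]{Willett:2010ud} --- although these $*$-isomorphisms themselves are not canonical, the maps they induce on $K$-theory are, and they are compatible with the structure maps of the inductive system in $R$; so passing to the direct limit gives a well-defined isomorphism $\lim_{R\to\infty}K_*(B_R)\cong K_*(\prod^{U,max}C^*(\wgn)^{\Gamma_n})$, which is the bottom row.

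Granting these three facts, the conclusion is a two-out-of-three argument for the commuting square. Reading around the square, $\mu_U$ equals the composite of the bottom isomorphism with $e_*\circ\mu_L$; hence $\mu_U$ is an isomorphism if and only if $e_*\circ\mu_L$ is, and since $\mu_L$ is already an isomorphism, this holds if and only if $e_*$ is an isomorphism. This proves the theorem, and reduces Theorem \ref{uniiso} (and hence Theorem \ref{maxthe}) to showing that the evaluation map $e$ on the localization algebras is a $K$-theory isomorphism, which is the goal of the next three sections.

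I expect no genuine obstacle here: all the analytic content has been absorbed into Proposition \ref{lociso} (a local-to-global statement proved, as in the non-equivariant case of \cite{Yu:1997kb}, by Lipschitz homotopy invariance together with a Mayer--Vietoris argument) and into the bookkeeping behind the bottom horizontal identification. The one point that requires a little care --- and the only place a subtle error could creep in --- is checking that this bottom identification really is canonical and compatible with the maps of the $R$-directed system after passing to $K$-theory, so that the square is honestly commutative in the inductive limit; but this is precisely the phenomenon already handled by \cite[Lemma 3.7]{Willett:2010ud}, so I would simply cite that.
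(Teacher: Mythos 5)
Your proof is correct and follows essentially the same route as the paper: both arguments identify the map in the statement with the right-hand vertical arrow $e_*$ in the commutative square preceding the theorem, invoke Proposition~\ref{lociso} for the top horizontal arrow $\mu_L$, note that the bottom horizontal arrow is an isomorphism via the canonical-on-$K$-theory identification $\prod^{U,max}C^*(P_R(\wgn))^{\Gamma_n}\cong\prod^{U,max}C^*(\wgn)^{\Gamma_n}$, and then conclude by a two-out-of-three argument. The paper's write-up is merely more terse, declaring the conclusion ``immediate'' from the diagram; your version spells out exactly the same reasoning.
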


In the next two sections, we will prove that the evaluation-at-zero map $e_*$ is an isomorphism.

\section{Isomorphism for twisted algebras}\label{twistsec}

In this subsection, we define twisted versions of the uniform products from the previous section
$$\umtwist$$
as well as twisted versions of the localization algebras
$$\ulmtwist;$$
just as in the previous section, there is then a twisted version of the evaluation-at-zero map  
\begin{equation}\label{twiste}
e:\ulmtwist\to \umtwist.
\end{equation}
Here each $\mathcal{V}_n$ is a real Hilbert space equipped with a proper isometric $\Gamma_n$ action and an equivariant coarse embedding $f_n:\wgn\to\mathcal{V}_n$ (cf.\ \cite[Definition 3.4]{Willett:2010ud}); $\mathcal{V}_n$ is built directly from the tree $\wgn$ using a well-known construction of Julg and Valette \cite{Julg:1984rr}.  $\ahn$ is then the $C^*$-algebra of a Hilbert space, defined by Higson--Kasparov--Trout \cite{Higson:1999be}, and should be thought of as providing `proper coefficients' for the Roe algebras and localization algebras.

The $C^*$-algebras $\ahn$ and twisted Roe algebras built from them are naturally graded.  It will be convenient in this section and the next for $K_*(A)$ to denote the \emph{graded} $K$-theory groups of a graded $C^*$-algebra $A$; of course, if $A$ has the trivial grading, then its graded $K$-theory is the same as its usual $K$-theory.

The following theorem is an analogue of the fact that the Baum-Connes conjecture with proper coefficients always holds (see \cite[Chapter 13]{Guentner:2000fj} and \cite{Chabert:2001ye} for the latter).

\begin{mvthe}\label{mvthe}
The map induced on $K$-theory by the direct limit as $R\to\infty$ of the twisted evaluation-at-zero maps from line \eqref{twiste} above,
$$e_*:\lim_{R\to\infty}K_*(\ulmtwist)\to \lim_{R\to\infty}K_*(\umtwist),$$
is an isomorphism.
\end{mvthe}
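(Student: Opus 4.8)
The plan is to follow the standard localization-algebra technique of Yu, adapted to the twisted equivariant uniform setting, reducing the statement to a purely local assertion via a cutting-and-pasting (Mayer--Vietoris) argument on the Rips complexes $P_R(\wgn)$. The key observation is that the twisted algebras $\umtwist$ and $\ulmtwist$ are built from the coefficient $C^*$-algebras $\ahn$ of the Hilbert spaces $\mathcal{V}_n$, and each $\mathcal{V}_n=\bigcup_d \mathcal{V}_{n,d}$ is a direct limit of finite-dimensional subspaces; moreover, since the embeddings $f_n\colon\wgn\to\mathcal{V}_n$ come from the Julg--Valette construction on trees, the dimensions involved (for a given propagation) are controlled \emph{uniformly in $n$}. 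This uniformity is what lets us run the argument simultaneously over all $n$ rather than one group at a time.

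First I would establish a \emph{finite-dimensional, finite-propagation} reduction: for each $R>0$ and each $d\in\N$, consider the subalgebras of $\umtwist$ (and its localized version) where all operators have support in the $d$-neighbourhood of the diagonal in $\mathcal{V}_n$-space. Because propagation in $P_R(\wgn)$ is bounded, and because the Julg--Valette embedding sends bounded sets in $\wgn$ into bounded-dimensional affine subspaces of $\mathcal{V}_n$ with distortion independent of $n$, the algebras $\umtwist$ and $\ulmtwist$ are the inductive limits (over $d$, in the appropriate topology) of these truncations, and $K$-theory commutes with the limit. It therefore suffices to prove the evaluation map is an isomorphism at each finite dimension $d$, uniformly, and then pass to the limit. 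Second, at each fixed $d$, I would decompose $P_R(\wgn)$ into pieces according to a uniformly bounded cover (this is where bounded geometry of the sequence is essential) and set up a Mayer--Vietoris sequence compatible with the evaluation maps $e$; the truncated twisted Roe and localization algebras form an ideal/quotient structure that yields six-term sequences, and the evaluation maps give a morphism between the two sequences. Third, for the base case of the induction on the size of the cover — a single uniformly bounded piece — the twisted localization algebra is Lipschitz-homotopy equivalent to (a matrix algebra over) $\prod_n \ahn^{\Gamma_n}$ with the zero-propagation structure, so that $e_*$ is manifestly an isomorphism; one then invokes the five lemma in the Mayer--Vietoris diagram to propagate the isomorphism. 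This mirrors the proof of Proposition \ref{lociso}, except that now the coefficients are the nontrivial graded algebras $\ahn$, and the role of ``$P_R(X)$ looks locally like a point'' is played by ``the twisted localization algebra over a uniformly bounded set computes the $K$-theory of $\ahn$''.

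The main obstacle, I expect, is handling the coefficients $\ahn$ correctly under cutting and pasting while keeping \emph{all estimates uniform in $n$}: the algebras $\ahn$ are infinite-dimensional and the ``grading'' and ``Clifford'' structure interact with the Mayer--Vietoris decomposition, so one must be careful that the ideals one uses really are ideals after twisting, that the quotients are again of the same form, and that the connecting Lipschitz homotopies have Lipschitz constants bounded independently of $n$. A secondary technical point is that the maximal norm $\|\cdot\|_{max}$ on the uniform product is not the supremum of the individual maximal norms (as the authors warn after Definition \ref{uniprod}), so exactness of the relevant sequences of twisted uniform products is not automatic from exactness fibrewise; one must verify directly that the cutting-and-pasting $*$-homomorphisms behave well with respect to arbitrary $*$-representations of the uniform product, which is really the same kind of argument used to prove exactness in Lemma \ref{unises}. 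Once these uniformity and exactness issues are in place, the rest is a routine adaptation of \cite{Yu:1997kb} and \cite{Yu:200ve}.
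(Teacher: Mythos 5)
Your plan follows the paper's argument quite closely: reduce to a local/uniformly bounded situation, use Lipschitz homotopy invariance, handle the base case directly, and glue via Mayer--Vietoris with all constants uniform in $n$. Two points where the paper is more precise than your sketch are worth flagging. First, the cutting-and-pasting is not literally a decomposition of $P_R(\wgn)$ but of the twisted coefficient space $\V_n\times\R_+$: one restricts supports (in the sense of Definition~\ref{supp}) to the ``generalized $r$-neighborhoods'' $(\wgn)_r$ of $f_n(\wgn)$ and decomposes these into finitely many \emph{basic} open collections (Lemma~\ref{splitlem}); the Mayer--Vietoris sequence then comes from identities $\umtwist_{O\cup O'}=\umtwist_O+\umtwist_{O'}$ and $\umtwist_{O\cap O'}=\umtwist_O\cap\umtwist_{O'}$ (Lemma~\ref{insum}), i.e.\ a pushout square rather than a short exact sequence of uniform products -- this is also what sidesteps your last worry about exactness under the maximal norm. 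Second, the base case over a uniformly bounded piece is not ``manifestly an isomorphism'': after Lipschitz-contracting to a collection of points one still needs an Eilenberg swindle to show $K_*(A^*_{L,0}(Y_0))=0$ (Lemma~\ref{simlem}), exactly as in the untwisted Proposition~\ref{lociso}. With those two adjustments your outline matches the paper's proof.
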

\noindent 
The proof uses a Mayer-Vietoris argument similar to that used by the second author in \cite[Section 6]{Yu:200ve}, but made equivariant and kept uniform over all $n$.  

We now begin with the preliminaries.  We start in a fairly general setting, partly as the objects are of interest in their right, and partly to keep the notation under control.  The following definition introduces the $C^*$-algebra of a Hilbert space.

\begin{c*h}\label{c*h}
Let $\V$ be a real (countably infinite dimensional) Hilbert space.
Denote by $V_a,V_b$ etc.\ the finite dimensional affine subspaces of $\V$.  Let $V_a^0$ be the linear subspace of $V$ consisting of differences of elements of $V_a$.  Let $\Cliff_\C(V_a^0)$ be the complexified Clifford algebra of $V_a^0$ and $\mathcal{C}(V_a)$ the graded $C^*$-algebra of continuous functions vanishing at infinity from $V_a$ into $\Cliff_\C(V_a^0)$.  Let $\mathcal{S}$ be the $C^*$-algebra $C_0(\R)$, graded by taking the even and odd parts to consist of even and odd functions respectively, and define $\mathcal{A}(V_a):=\mathcal{S}\hat{\otimes}\mathcal{C}(V_a)$ (throughout, `$\hat{\otimes}$' denotes the graded spatial tensor product of graded $C^*$-algebras, or the completed graded tensor product of graded Hilbert spaces as appropriate).   

If $V_a\subseteq V_b$, denote by $V_{ba}^0$ the orthogonal complement of $V_{a}^0$ in $V_b^0$.  One then has a decomposition $V_b=V_{ba}^0+V_a$ and corresponding (unique) decomposition of any $v_b\in V_b$ as $v_b=v_{ba}+v_{a}$.  Any function $h\in\mathcal{C}(V_a)$ can thus be extended to a multiplier $\tilde{h}$ of $\mathcal{C}(V_b)$ by the formula 
$$\tilde{h}(v_{ba}+v_a)=h(v_a)\in\Cliff_\C(V_a^0)\subseteq\Cliff_\C(V_b^0).$$

Continuing to assume that $V_a\subseteq V_b$, denote by $C_{ba}:V_b\to\Cliff_\C(V_{ba}^0)$ the function $v_b\mapsto v_{ba}$ where $v_{ba}$ is considered as an element of $\Cliff_\C(V_{ba}^0)$ via the inclusion $V_{ba}^0\subseteq \Cliff_\C(V_{b}^0)$.  Let $X$ be the unbounded multiplier of $\mathcal{S}$ given by the function $t\mapsto t$.  Define a $*$-homomorphism $\beta_{ba}:\mathcal{A}(V_a)\to\mathcal{A}(V_b)$ via the formula
\begin{displaymath}
\beta_{ba}(g\hat{\otimes}h)=g(X\hat{\otimes}1+1\hat{\otimes}C_{ba})(1\hat{\otimes}\tilde{h}),
\end{displaymath}  
where $g\in\mathcal{S}$, $h\in\mathcal{C}(V_a)$ and the term on the right involving $g$ is defined using the functional calculus for unbounded multipliers.

These maps make the collection $(\mathcal{A}(V_a))$ as $V_a$ ranges over finite dimensional affine subspaces of $V$ into a directed system.  Define the \emph{$C^*$-algebra of $\V$} to be
\begin{displaymath}
\mathcal{A}(\V)=\lim_\rightarrow\mathcal{A}(V_a)
\end{displaymath} 
\end{c*h}

Now, for any finite dimensional $V_a\subseteq \V$, $C_0(V_a\times\R_+)$ is included in $\mathcal{A}(V_a)$ as its center.  It follows that  the center $Z(\mathcal{A}(\mathcal{V}))$ is $C_0(\V\times\R_+)$, where $\mathcal{V}\times\R_+$ is equipped with the weakest topology such that the projection to $\mathcal{V}$ is weakly continuous, and so that the functions
$$
(w,t)\mapsto t^2+\|v-w\|^2
$$
as $v$ ranges over $\mathcal{V}$ are continuous.  This makes $\mathcal{V}\times\R_+$ into a locally compact Hausdorff space in which the `balls' 
\begin{equation}\label{ball}
B_r(v):=\{(w,t)\in\V\times\R_+~|~t^2+\|v-w\|^2<r\}
\end{equation}
are open and the `closed balls'
$$\overline{B}_r(v):=\{(w,t)\in\V\times\R_+~|~t^2+\|v-w\|^2\leq r\}$$
are compact; both of these statements follow from the fact that the function $(w,t)\mapsto e^{-(t^2+\|v-w\|^2)}$ is a $C_0$-function on $\V\times\R_+$ for the topology above -- indeed it is the image of the function $e^{-t^2}\in\mathcal{S}\cong \mathcal{A}(\{v\})$ under the $*$-homomorphism $\mathcal{A}(\{v\})\to\mathcal{A}(\V)$ defined by the construction of the latter algebra as a direct limit.  For the remainder of this section and the next $\V\times\R_+$ is always considered with this topology.

\begin{suppv}\label{suppv}
The \emph{support} of an element $a\in\mathcal{A}(\V)$ is the complement of all points $(v,t)\in\V\times\R_+$ such that there exists $g\in C_0(\V\times\R_+)$ with $g(v,t)\neq0$ and $g\cdot a=0$.  

If $O$ is an open subset of $\V\times\R_+$, define $\mathcal{A}(O)$ to be the closure in $\mathcal{A}(\V)$ of the $*$-subalgebra of $\mathcal{A}(\V)$ consisting of all elements with support in $O$. $\mathcal{A}(O)$ is then a closed ideal in $\mathcal{A}(\V)$.
\end{suppv}

Now, assume that $\V$ is a real Hilbert space equipped with an affine isometric action of a countable discrete group $\Ga$.  This action gives rise to an action of $\Ga$ on $\mathcal{A}(\V)$ by $*$-automorphisms, and a compatible action on $C_0(\V\times\R_+)$ by homeomorphisms (of course this latter is just given by the formula $g\cdot (v,t)=(g\cdot v,t)$, with the obvious notation). Let $Y$ be a uniformly discrete bounded geometry metric space equipped with an isometric action of $\Ga$ and an equivariant coarse embedding $f:Y\to \V$.  For each $R>0$, $f$ may be extended to a continuous map $f:P_R(Y)\to\V$ by stipulating that $f$ preserve convex combinations.  Assuming $Y$ is `coarsely geodesic' in any reasonable sense (we will eventually only be interested in trees, which certainly have this property), this extension $f$ is still an equivariant coarse embedding.

\begin{dubyas}\label{dubyas}
Let $x$ be a point in $P_R(Y)$ for some $R>0$.  For each $k\in\N$ define
$$
W_k(x):=f(x)+\text{span}\{f(y)-f(x)~|~y\in P_R(Y) \text{ and } d_{P_R(Y)}(x,y)\leq k^2\},
$$
a finite dimensional affine subspace of $\V$ (this uses that $f$ is a coarse embedding, and bounded geometry of $Y$).  

Denote by $\beta_k(x):\mathcal{A}(W_k(x))\to \mathcal{A}(\V)$ the $*$-homomorphism coming from the definition of $\mathcal{A}(V)$ as a directed system as in Definition \ref{c*h} above, and write $\beta(x)$ for 
$$\beta_0(x):\mathcal{S}\cong\mathcal{A}(W_0(x))\to\mathcal{A}(\V).$$
\end{dubyas}

The following definition gives the twisted Roe algebras that form the basis for the argument in this section.

\begin{twisted}\label{twisted}
Let $f,Y,\Ga,\V$ be as above.  For each $R>0$, choose a countable dense $\Ga$-equivariant subset of $P_R(Y)$, say $Z=Z_R$, just as we did when defining $\C[X]^{\Ga}$ in  \cite[Definition 3.6]{Willett:2010ud}.  Assume moreover that $Z_R\subseteq Z_{R'}$ whenever $R\leq R'$.
Define $\C[P_R(Y);\mathcal{A}(\V)]^{\Ga}$ to be the collection of $Z\times Z$ indexed matrices $(T_{x,y})$ such that each $T_{x,y}$ is an element of $\mathcal{A}(\V)\hat{\otimes}\mathcal{K}$ and such that:
\begin{enumerate}
\item for all $(x,y)\in Z\times Z$ and all $g\in\Ga_n$, $T_{gx,gy}=g\cdot T_{x,y}$;
\item there exists $L>0$ so that for each $x\in Z$ the cardinalities of the sets
$$\{z\in Z~|~T_{x,z}\neq 0\} ~~\text{ and }~~\{z\in Z~|~T_{z,x}\neq0\}$$
are both at most $L$;
\item there exists $M\geq 0$ so that $\|T_{x,y}\|\leq M$ for all $x,y\in Z$;
\item there exists $r_1>0$ so that $T_{x,y}=0$ whenever $d(x,y)> r_1$;
\item there exists $r_2>0$ so that for all $x,y\in Z\times Z$, $\text{supp}(T_{x,y})\subseteq B_{r_2}(f(x))$;
\item there exist $k,K>0$ such that for each $x,y\in Z$ there exists $T'_{x,y}\in\mathcal{A}(W_k(x))\hat{\otimes}\mathcal{K}$ such that $T_{x,y}=(\beta_k(x)\hat{\otimes}1)(T'_{x,y})$ and moreover so that $T'_{x,y}$ is a finite linear combination of at most $K$ elementary tensors from
$$\mathcal{A}(W_k(x))\hat{\otimes}\mathcal{K}\cong \mathcal{S}\hat{\otimes}C_0(W_k(x),\Cliff_\C(W_k(x)^0))\hat{\otimes}\mathcal{K};$$
\item there exists $c>0$ such that if $T'_{x,y}$ is as above, and $w\in W_k(x)\times\R_+$ is of norm one then the derivative of $T'(x,y)$ in the direction of $w$, $\nabla_{w} T'_{x,y}$, exists in $\mathcal{A}(W_k(x))\hat{\otimes}\mathcal{K}$ and is of norm at most $c$.
\end{enumerate}
$\C[P_R(Y);\mathcal{A}(\V)]^{\Ga}$ is then made into a $*$-algebra using the usual matrix operations, and $*$-algebra operations on $\mathcal{A}(\V)\hat{\otimes}\mathcal{K}$.
\end{twisted}

\begin{supp}\label{supp}
The \emph{support} of an element $T\in \C[P_R(Y);\mathcal{A}(\V)]^{\Ga}$ is the set
$$\text{supp}(T):=\{(x,y,v,t)\in Z\times Z\times \V\times\R_+~|~(v,t)\in \text{supp}(T_{x,y})\}.$$
If $O$ is a $\Ga$-invariant open subset of $\V$, $\C[P_R(Y);\mathcal{A}(\V)]^{\Ga}_O$ is defined to be the $*$-ideal of $\C[P_R(Y);\mathcal{A}(\V)]^{\Ga}$ consisting of elements with support in $Z\times Z\times O$.
\end{supp}

We now specialize back to the situation of interest: a sequence of trees $(\wgn)_{n\in\N}$ equipped with free, proper, isometric and cocompact actions of (necessarily free) groups $\Ga_n$.  The following construction, due to Julg and Valette \cite{Julg:1984rr}, is fundamental; it could equivalently be performed using negative type functions, but we prefer this more direct approach.

Let $T$ be the vertex set of a tree, and $E$ its oriented edge set.  For an edge $e\in E$, denote by $-e$ the same edge, but with the opposite orientation.  Define $\Omega(T)$ to be the quotient of the real Hilbert space of square summable functions on $E$ by the closed subspace spanned by elements of the form $\delta_e+\delta_{-e}$ (one thinks of $\Omega(T)$ as $l^2$-sections of the `tangent bundle' of $T$).  Thus in $\Omega(T)$, $\delta_{-e}=-\delta_e$.  Fix a basepoint $b\in T$, and for any $x\in T$, let $\text{geod}(x)\subseteq E$ be the collection of edges on the (unique) edge geodesic from $b$ to $x$ (oriented to point from $b$ to $x$).  Define now a map $f:T\to \Omega(T)$ by 
$$
f:x\mapsto \sum_{e\in \text{geod}(x)}\delta_e,
$$
and note the equality 
$$
\|f(x)-f(y)\|_{\Omega(T)}^2=d_T(x,y)
$$
for all $x,y\in T$ (a corollary of the fact that the triangle $(b,x,y)$ in the tree $T$ looks like a tripod); in particular, $f$ is a coarse embedding.  Moreover, if a group $\Gamma$ acts properly on $T$ by isometries, then the formula
$$
\alpha_g:\delta_e\mapsto \delta_{g\cdot e}+f(g\cdot b)
$$ 
defines a proper affine isometric action on $\Omega(T)$, for which the embedding $f:T\to \Omega(T)$ is equivariant (this again used the `tripod' quality of triangles in a tree).

Applying this to the trees $\wgn$ and groups $\Ga_n$ gives a sequence of real Hilbert spaces $\mathcal{V}_n$ equipped with proper affine isometric actions of $\Ga_n$ and equivariant coarse embeddings $f_n:\wgn\to \mathcal{V}_n$ such that 
\begin{equation}\label{distort}
\|f_n(x)-f_n(y)\|_{\mathcal{V}_n}^2=d_{\wgn}(x,y)
\end{equation}
and so that the affine subspace generated by $f_n(\wgn)$ is dense in $\V_n$ (and actually a linear subspace, as $f_n(b)=0$).
  For any $R>0$, we extend $f_n$ to $P_R(\wgn)$ by stipulating that $f_n$ preserve convex combinations; the resulting maps $f_n:P_R(\wgn)\to \mathcal{V}_n$ remain equivariant, and while they no longer satisfy the precise equalities in line (\ref{distort}) above, they are still coarse embeddings with respect to uniform constants across the entire sequence $(\wgn)_{n\in\N}$ (this uses bounded geometry of the original sequence $(G_n)$).

\begin{unitwist}\label{unitwist}
Define $\uatwist$ to be the set of sequences $\mathbf{T}=(T^{(0)},T^{(1)},...)$ such that each $T^{(n)}$ is an element of $\C[P_R(\wgn);\ahn]^{\Ga_n}$ and so that conditions 2--7 in Definition \ref{twisted} are satisfied by each $T^{(n)}$ with respect to the same constants.  This set is given a $*$-algebra structure using pointwise operations.

The \emph{twisted maximal Roe algebra}, denoted
$$\umtwist,$$
is defined to be the completion of $\uatwist$ for the norm
$$\|\mathbf{T}\|_{max}=\sup\{\|\pi(T)\|_{\mathcal{B}(\mathcal{H})}~|~\pi:\uatwist\to \mathcal{B}(\mathcal{H}) \text{ a $*$-representation}\}$$
in the usual way; it is not hard to use uniform bounded geometry of the trees $\wgn$ to check that the norm is finite. 

Let moreover $\ualtwist$ denote the $*$-algebra of all bounded, uniformly continuous (for the norm $\|\cdot\|_{max}$ defined above) maps
$$
\mathbf{f}:[0,\infty)\to \uatwist
$$
such that there are uniform constants with respect to which the sequences of operators $\mathbf{f}(t)$ satisfy conditions 2--7 in Definition \ref{twisted} for all $t$, and so that if
$$\mathbf{f}(t)=(f^{(0)}(t),f^{(1)}(t),...)\in \uatwist$$
then $\sup_{n}\text{prop}(f^{(n)}(t))\to0$ as $t\to\infty$.  Define 
$$
\ulmtwist
$$
to be the completion of $\ualtwist$ for the norm $\|\mathbf{f}\|_{max}:=\sup_t\|\mathbf{f}(t)\|_{max}$.
\end{unitwist}

The proof of Theorem \ref{mvthe} proceeds by a Mayer-Vietoris argument, which is used to reduce the problem to a study of particularly simple pieces.  The following definition restricts the algebras from Definition \ref{unitwist} down to a subset.

\begin{restwist}\label{restwist}
Let $O=(O_n)_{n\in\N}$ be a sequence of sets such that each $O_n$ is a $\Ga$-invariant open sets of $\V_n\times\R_+$.  We define 
$$
\uatwist_O
$$
to be the collection of sequences $(T^{(0)},T^{(1)},...)$ in $\uatwist$ such that each $T^{(n)}$ is in $\C[P_R(\wgn);\ahn]^{\Ga_n}_{O_n}$ as in Definition \ref{suppv}.  The $C^*$-algebra
$$
\umtwist_O
$$
is then defined to be the closure of $\uatwist_O$ in $\umtwist$.  

Similarly, 
$$
\ualtwist_O
$$
is defined to be the collection of $\mathbf{f}\in\ualtwist$ such that $f^{(n)}(t)\in \C[P_R(\wgn);\ahn]^{\Ga_n}_{O_n}$ for all $t,n$, and 
$$
\ulmtwist_O
$$
is its closure in $\ulmtwist$.
\end{restwist} 

The pieces we actually use are as in the following definition.

\begin{basicp}\label{basicp}
Fix $R>0$.  Let $r>0$ and $k\in\N$.  An open subset $O_n\subseteq \V_n$ is called \emph{($r,k$)-basic} if it can be written as a finite disjoint union of orbits
$$
O_n=\sqcup_{i=1}^k \Ga_n\cdot U_{n,i}
$$
where for each $i$ there exists $x_{n,i}\in P_R(\wgn)$ such that $U_{n,i}\subseteq B_r(f(x_{n,i}))$, and so that $U_{n,i}$ is the pullback to $\V_n\times\R_+$ of an open ball in $W_k(x_{n,i})\times\R_+$ under the natural map
$$
\V_n\times\R_+\to W_k(x_{n,i})\times\R_+
$$
coming from the definition of $\V_n\times\R_+$ as a projective limit (in particular, $U_{n,i}$ is open).

A collection $O=(O_n)_{n\in\N}$, where each $O_n$ is an open subset of $\mathcal{V}_n$ is called \emph{basic} if there exist $r,k$ such that each $O_n$ is ($r,k$)-basic.
\end{basicp}

\begin{basiclem}\label{basiclem}
Let $O$ be a basic collection.  Then the restricted evaluation-at-zero map
$$
e_*: \lim_{R\to\infty}K_*(\ulmtwist_O) \to \lim_{R\to\infty}K_*(\umtwist_O)
$$
is an isomorphism.
\end{basiclem}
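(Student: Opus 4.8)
The plan is to show that, thanks to the large girth hypothesis, each basic piece collapses --- on both the Roe and the localization side --- to a uniform product of elementary algebras, for which the evaluation-at-zero map is governed by the classical statement for the localization algebra of a point.

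\textbf{Step 1 (reduction to one orbit).} As each $O_n$ is a \emph{disjoint} union $\sqcup_{i=1}^{k}\Ga_n\cdot U_{n,i}$, the ideals $\mathcal{A}(\Ga_n\cdot U_{n,i})$ of $\mathcal{A}(\V_n)$ have pairwise disjoint supports, so any element of $\C[P_R(\wgn);\ahn]^{\Ga_n}_{O_n}$ splits, with uniform norm control, as a sum of $k$ pieces supported in the individual orbits. Hence $\umtwist_O$ and $\ulmtwist_O$ are finite direct sums of the algebras attached to the collections $O^{(i)}=(\Ga_n\cdot U_{n,i})_n$, compatibly with evaluation at zero; since $K$-theory commutes with finite direct sums we may assume $k=1$, so $O_n=\Ga_n\cdot U_n$ with $U_n\subseteq B_r(f(x_n))$ the pullback of an open ball in $W_k(x_n)\times\R_+$.

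\textbf{Step 2 (collapsing the orbit via large girth).} Conditions (4)--(6) of Definition \ref{twisted}, together with the requirement that supports lie in $Z\times Z\times O_n$, force $T^{(n)}_{x,y}=0$ unless $x$ and $y$ lie within a fixed distance $\rho=\rho(r_1,r_2,r)$ of the orbit $\Ga_n\cdot x_n$ (using that the $f_n$ are coarse embeddings with $n$-independent distortion). Since $\mathrm{girth}(G_n)\to\infty$, for all large $n$ any two distinct points of the orbit are more than $2\rho+r_1$ apart, so the $\rho$-neighbourhood of $\Ga_n\cdot x_n$ in $P_R(\wgn)$ is the disjoint union $\sqcup_{g\in\Ga_n}g\cdot\Lambda_n^{0}$ of $\Ga_n$-translates of the $\rho$-ball $\Lambda_n^{0}$ about $x_n$, no two of which come within distance $r_1$; likewise the pieces $g\cdot U_n$ of $O_n$ are mutually far. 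A $\Ga_n$-equivariant operator supported in $O_n$ thus has no nonzero entries between distinct translates, is determined by its block over $\Lambda_n^{0}$, and that block has coefficients in the ideal $\mathcal{A}(U_n)\subseteq\mathcal{A}(W_k(x_n))$ of elements supported in $U_n$. This produces equivariance-free identifications, uniform in $n$ and respecting the maximal completions, $\umtwist_O\cong\prod^{U}_n(\mathcal{A}(U_n)\hat{\otimes}\mathcal{K})$ and $\ulmtwist_O\cong\prod^{U}_n C^*_L(\Lambda_n^{0};\mathcal{A}(U_n))$, where $C^*_L(\Lambda_n^{0};\mathcal{A}(U_n))$ is the localization algebra of the bounded simplicial complex $\Lambda_n^{0}$ with coefficients $\mathcal{A}(U_n)$; these intertwine the twisted evaluation-at-zero map with the maps $C^*_L(\Lambda_n^{0};\mathcal{A}(U_n))\to C^*(\Lambda_n^{0};\mathcal{A}(U_n))\cong\mathcal{A}(U_n)\hat{\otimes}\mathcal{K}$.

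\textbf{Step 3 (contracting to a point).} By uniform bounded geometry the $\Lambda_n^{0}$ fall into finitely many isometry types, each Lipschitz contractible to a point; hence the uniform form of Lipschitz homotopy invariance for localization algebras used in the proofs of Proposition \ref{lociso} and Theorem \ref{mvthe} (originating in \cite{Yu:1997kb, Yu:200ve}) reduces $e_*$ on $K$-theory, uniformly in $R$, to the evaluation-at-zero map for a uniform product of copies of the localization algebra of a one-point space with coefficients in $\mathcal{A}(U_n)\hat{\otimes}\mathcal{K}$. For a one-point space this map is a $K$-theory isomorphism --- it realizes the index map $K_*(B)\to K_*(B\hat{\otimes}\mathcal{K})$ for an arbitrary $C^*$-algebra $B$ (see \cite{Yu:1997kb}) --- so $e_*$ is an isomorphism; taking the limit over $R$, which is harmless as all of the above respects the structure maps $P_R(\wgn)\to P_{R'}(\wgn)$, completes the proof.

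\textbf{Where the difficulty lies.} The technical weight is in Steps 2--3: the collapse of the orbit and the Lipschitz contractions must be carried out with all the relevant constants --- propagation bounds, numbers of nonzero matrix coefficients, Lipschitz and propagation data of the homotopies --- independent of $n$, so that every construction stays inside the uniform-product formalism, and each identification must be checked against the \emph{maximal}, rather than merely the reduced, completions (this is essentially the only place maximality enters the argument). With that uniformity in place, each individual step is a routine equivariant-and-uniform variant of the corresponding part of \cite{Yu:1997kb} and \cite{Yu:200ve}.
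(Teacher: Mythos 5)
Your proposal is correct and follows essentially the same strategy as the paper's: collapse the equivariant twisted algebras over a basic set to uniform products over bounded neighbourhoods of each orbit (using girth tending to infinity / asymptotic faithfulness to separate the orbit translates, as the paper does implicitly in Lemma \ref{isolem} and makes explicit in the proof of Lemma \ref{splitlem}), then contract those neighbourhoods to points via the uniform form of Lipschitz homotopy invariance (Lemmas \ref{liplem}--\ref{simlem}), and invoke the evaluation-at-zero isomorphism for one-point spaces. The paper presents this in a slightly different order --- passing first to a collection of single simplices $\Delta^R_{n,i}$ and then contracting to points with an explicit Eilenberg swindle for the kernel of the evaluation map --- but the ideas, and the places where uniformity across $n$ and the maximal completion must be checked, are the same.
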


The proof is similar to \cite[Lemma 6.4]{Yu:200ve}; in order to keep this piece relatively self-contained, we will give a complete proof.  The proof is in any case in some respects simpler than \cite[Lemma 6.4]{Yu:200ve}, due to the fact that `equivariance forces uniformity'.  The proof requires some preliminaries.

Fix for the moment a basic collection $O=(O_n)$, and write 
$$
O_n=\sqcup_{i=1}^{k_n}\Ga_n\cdot U_{n,i}
$$
where each $U_{n,i}$ is an open set as in Definition \ref{basicp} contained in some $B_r(f(x_{i,n}))\subseteq \V_n\times\R_+$.  Fix for the moment $R>0$ and let 
\begin{equation}\label{ycol}
Y=(\{Y_{n,i}\}_{i=1}^{k_n})_{n\in\N}
\end{equation}
be such that:
\begin{itemize}
\item each $Y_{n,i}$ is a closed subset of $P_R(\wgn)$;
\item there exists $s$ such that $\text{diameter}(Y_{n,i})\leq s$ for all $n,i$;
\item $x_{n,i}$ is an element of $Y_{n,i}$ for each $n,i$.
\end{itemize}
Note that we do not assume that the collection of subsets $Y_{n,1},...,Y_{n,k_n}$ of $P_R(\wgn)$ is disjoint.

For such a collection $Y$ define $A(Y)$ to be the $*$-algebra of sequences $\mathbf{T}=(T^{(0)},T^{(1)},...)$ such that for each $n$
$$
T^{(n)}=(T^{(n,1)},...,T^{(n,k_n)})\in\oplus_{i=1}^{k_n} C^*(Y_{n,i})\hat{\otimes}\mathcal{A}(U_{n,i})
$$
and so that conditions 2--7 from Definition \ref{twisted} are satisfied by all of the operators $T^{(n,i)}$ uniformly.  Note that conditions 4 and 5 are redundant, however, by uniform boundedness of the sets $Y_{n,i}$.  Let $A^*(Y)$ be the completion of $A(Y)$ for the norm 
$$
\|\mathbf{T}\|:=\sup_{n,i}\|T^{(n,i)}\|_{C^*(Y_{n,i})\hat{\otimes}\mathcal{A}(O_{n,i})}.
$$

Similarly, define $A_L(Y)$ to be the $*$-algebra of bounded, uniformly continuous (with respect to the norm above) maps $\mathbf{f}:[0,\infty)\to A(Y)$ such that if we write
$$\mathbf{f}(t)=((f^{(n,i)}(t))_{i=1}^k)_{n\in\N}$$
then 
$$
p(t):=\sup_{n,i}(\text{prop}(f^{(n,i)}(t)))
$$
exists and tends to zero as $t$ tends to infinity.  Define $A^*_L(Y)$ to be the completion of $A_L(Y)$ for the norm $\|\mathbf{f}\|=\sup_t\|\mathbf{f}(t)\|$.  Note that there is of course an evaluation-at-zero map
\begin{equation}\label{aezero}
e:A_L^*(Y)\to A^*(Y).
\end{equation}

The next three definitions are essentially from \cite{Yu:1997kb}.

\begin{lipdef}\label{lipdef}
Let $Y=\{Y_{n,i}\}$ and $Y'=\{Y'_{n,i}\}$ be two collections of subsets as in line \eqref{ycol} above.  A collection of maps $g=\{g_{n,i}:Y_{n,i}\to Y_{n,i}'\}$ is said to be a \emph{Lipschitz map from $Y$ to $Y'$} if there exists some $c\geq 0$ such that each $g_{n,i}$ is $c$-Lipschitz.  Composition of Lipschitz maps is defined component-wise in the obvious way.
\end{lipdef}

\begin{lipisom}\label{lipisom}
Let $g=\{g_{n,i}:Y_{n,i}\to Y_{n,i}'\}$ be a Lipschitz map from $Y$ to $Y'$ as in the previous definition.
Let $Z_{n,i}\subseteq Y_{n,i}$ be the countable dense subset used to define $C^*(Y_{n,i})$ and similarly for $Z'_{n,i}\subseteq Y'_{n,i}$.  Note in particular that for each $n,i$, $C^*(Y_{n,i})$ is represented on $\mathcal{H}_{n,i}:=l^2(Z_{n,i},\Hi)$ for some fixed separable infinite dimensional Hilbert space $\Hi$, and similarly for $\mathcal{H}'_{n,i}$.

Let now $(\epsilon_m)_{m\in\N}$ be any sequence of positive real numbers that converges to zero. For each $n,i$ and positive integer $m$, there exists an isometry
$$V_{n,i,m}:\mathcal{H}_{n,i}\to\mathcal{H}_{n,i}'$$
such that if $\text{supp}(V_{n,i,m})$ is the complement of the set of all $(y,y')\in Y_{n,i}\times Y_{n,i}'$ such that there exist $h\in C(Y_{n,i})$ and $h'\in C(Y_{n,i}')$ with $h(y)\neq 0\neq h'(y')$ and $hV_{n,i,m}h'=0$, then 
$$
\text{supp}(V_{n,i,m})\subseteq \{(y,y')\in Y_{n,i}\times Y_{n,i}'~|~d(g(y),y')<\epsilon_m\}.
$$
This construction uses a standard partition of unity argument (see for example \cite[Section 4, Lemma 2]{Higson:1993th}).

Define now 
$$V_m=\oplus_{n\in\N}\oplus_{i=1}^{k_n}V_{n,i,m}:\oplus_{n,i}\mathcal{H}_{n,i}\to\oplus_{n,i}\mathcal{H}_{n,i}.$$
For $t\in[0,1]$, let  
$$
R(t)=\begin{pmatrix} \cos(\frac{\pi}{2}t) & \sin(\frac{\pi}{2}t) \\ -\sin(\frac{\pi}{2}t) & \cos(\frac{\pi}{2}t) \end{pmatrix}
$$
and define a $t$-parametrized family of isometries
$$
V_g(t):(\oplus_{n,i}\mathcal{H}_{n,i})\oplus(\oplus_{n,i}\mathcal{H}_{n,i})\to (\oplus_{n,i}\mathcal{H}'_{n,i})\oplus(\oplus_{n,i}\mathcal{H}'_{n,i})
$$
via the formula 
$$
V_g(t)=R(t-m+1)\begin{pmatrix} V_m & 0 \\ 0 & V_{m+1} \end{pmatrix}R(t-m+1)^*
$$
whenever $m-1\leq t<m$.  Any family $V_g(t)$ constructed in this way is called a \emph{covering isometry for $g$}.

Let now $A_{L,0}^*(Y)$ be the kernel of the evaluation $*$-homomorphism $e$ in line \eqref{aezero} above, and $A_{L,0}^*(Y)^+$ its unitization.  Define finally a $*$-homomorphism 
$$Ad(V_g):A^*_{L,0}(Y)^+\to M_2(A^*_{L,0}(Y')^+)$$
by the formula
$$
Ad(V_g):a+\lambda I \mapsto V_g(t)\begin{pmatrix} a(t) & 0 \\ 0 & 0 \end{pmatrix}V_g(t)^*+\lambda I.
$$

It is not hard to check that the map on $K$-theory induced by $Ad(V_g)$ does not depend on any of the choices involved in its construction (essentially the same argument as in \cite[Section 4, Lemma 3]{Higson:1993th} applies).   We denote it by
$$g_*=Ad(V_g)_*:K_*(A^*_{L,0}(Y))\to K_*(A^*_{L,0}(Y)).$$  
Note that this association is `functorial' in the sense that if $g,g'$ are two Lipschitz maps as above, then
$$
g_*\circ g'_*=Ad(V_g)_*\circ Ad(V_{g'})_*=Ad(V_{g}V_{g'})_*=Ad(V_{g\circ g'})_*=(g\circ g')_*.
$$  
\end{lipisom}

\begin{liphom}\label{liphom}
Say that $\{g^0_{n,i}\}$ are $\{g^1_{n,i}\}$ are two Lipschitz maps from $Y$ to $Y'$.  They are said to be \emph{strongly Lipschitz homotopic} if there exists a collection of maps $\{F_{n,i}:Y_{n,i}\times[0,1]\to Y'_{n,i}\}$ such that
\begin{itemize}
\item $F_{n,i}(\cdot, j)=g^j_{n,i}$ for all $n,i$ and $j=0,1$;
\item there exists $c>0$ such that each $F_{n,i}$ is $c$-Lipschitz when restricted to slices of the form $Y_{n,i}\times\{t\}\subseteq Y_{n,i}\times[0,1]$;
\item the collection of restrictions
$$\{F_{n,i}|_{\{x\}\times[0,1]}~|~n\in\N,i=1,...,k_n,x\in Y_{n,i}\}$$
is equicontinuous.
\end{itemize}

Two collections $Y$, $Y'$ are said to be \emph{strong Lipschitz homotopy equivalent} if there exist Lipschitz maps $g:Y\to Y'$ and $g':Y'\to Y$ such that the Lipschitz maps $g'\circ g$ and  $g\circ g'$ are strongly Lipschitz homotopic to the identity maps on $Y$ and $Y'$ respectively.
\end{liphom}

The next two lemmas are similar to \cite[Lemmas 6.5 and 6.6]{Yu:200ve}.  

\begin{liplem}\label{liplem}
Let $Y$ be as in line \eqref{ycol} above, and $A_{L,0}^*(Y)$ be the kernel of the evaluation-at-zero map as in line \eqref{aezero} above.  If $Y$ and $Y'$ are strong Lipschitz homotopy equivalent, then the $K$-theory groups $K_*(A^*_{L,0}(Y))$ and $K_*(A^*_{L,0}(Y'))$ are isomorphic.
\end{liplem}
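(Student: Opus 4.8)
The plan is to deduce the lemma from the general fact that the assignment $g\mapsto g_*$ of Definition \ref{lipisom} is functorial, sends the identity to the identity, and is invariant under strong Lipschitz homotopy. Granting these three facts, if $g:Y\to Y'$ and $g':Y'\to Y$ exhibit $Y$ and $Y'$ as strong Lipschitz homotopy equivalent, then
\[
g'_*\circ g_* = (g'\circ g)_* = (\mathrm{id}_Y)_* = \mathrm{id}, \qquad g_*\circ g'_* = (g\circ g')_* = (\mathrm{id}_{Y'})_* = \mathrm{id},
\]
so $g_*:K_*(A_{L,0}^*(Y))\to K_*(A_{L,0}^*(Y'))$ is an isomorphism. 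Functoriality is already recorded at the end of Definition \ref{lipisom}. That $(\mathrm{id}_Y)_*=\mathrm{id}$ holds because one may take a covering isometry for the identity map whose components $V_{n,i,m}$ are supported arbitrarily close to the diagonal, so that $Ad(V_{\mathrm{id}})$ is homotopic through $*$-homomorphisms to the canonical corner embedding $a\mapsto\mathrm{diag}(a,0)$, which induces the identity on $K$-theory. Thus the real content is strong Lipschitz homotopy invariance of $g\mapsto g_*$.

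To prove that, let $g^0,g^1:Y\to Y'$ be strongly Lipschitz homotopic via $F=\{F_{n,i}:Y_{n,i}\times[0,1]\to Y_{n,i}'\}$, with every slice $F_{n,i}(\cdot,s)$ $c$-Lipschitz and the collection of restrictions $\{F_{n,i}|_{\{x\}\times[0,1]}\}$ equicontinuous, as in Definition \ref{liphom}. I would construct a single $*$-homomorphism
\[
\Phi:A_{L,0}^*(Y)^+ \to M_2\big(C([0,1],A_{L,0}^*(Y')^+)\big)
\]
whose evaluation at $s=0$ (respectively $s=1$) is $Ad$ of some covering isometry for $g^0$ (respectively $g^1$). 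Since the two evaluation $*$-homomorphisms $C([0,1],A_{L,0}^*(Y')^+)\to A_{L,0}^*(Y')^+$ induce the same map on $K$-theory, it then follows that $g^0_*=\mathrm{ev}_{0,*}\circ\Phi_*=\mathrm{ev}_{1,*}\circ\Phi_*=g^1_*$.

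The homomorphism $\Phi$ is the conjugation $a\mapsto V_s(t)\,\mathrm{diag}(a(t),0)\,V_s(t)^*$ by a family $V_s(t)$ of covering isometries for the maps $F_s:=F(\cdot,s)$ that depends continuously on $s\in[0,1]$. That such conjugations land in $M_2(A_{L,0}^*(Y')^+)$ for every fixed $s$ is exactly the verification already needed for $Ad(V_g)$ in Definition \ref{lipisom}: since the $F_s$ are uniformly $c$-Lipschitz, the propagation of $V_s(t)\,\mathrm{diag}(a(t),0)\,V_s(t)^*$ is bounded by roughly $2\epsilon_m+c\cdot\mathrm{prop}(a(t))$ on the localization interval $[m-1,m]\ni t$, which tends to $0$ as $t\to\infty$ because $\epsilon_m\to 0$ and $\mathrm{prop}(a(t))\to 0$. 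To build $V_s(t)$ itself I would, on each localization interval $[m-1,m]$, subdivide the homotopy parameter $[0,1]$ into pieces $[\sigma_l,\sigma_{l+1}]$ fine enough that $d(F_s(y),F_{\sigma_l}(y))<\epsilon_m$ for all $y$ and all $s$ in the piece (possible by equicontinuity, uniformly over $n,i$), use on each piece a fixed resolution-$(\tfrac12\epsilon_m)$ covering isometry of $F_{\sigma_l}$, and interpolate between adjacent pieces with the rotation $R(\cdot)$ of Definition \ref{lipisom} — now applied in the $s$-variable instead of the $t$-variable; a final interpolation between the subdivisions attached to consecutive intervals $[m-1,m]$ and $[m,m+1]$ produces a family jointly continuous in $(s,t)$ with all the required support estimates.

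The main obstacle is precisely this construction of $V_s(t)$: arranging continuity in the homotopy parameter $s$ while retaining control of supports as $t\to\infty$, i.e.\ meshing the subdivision of $[0,1]$ with the subdivision of $[0,\infty)$. This is where the equicontinuity hypothesis in Definition \ref{liphom} enters essentially. The corresponding non-equivariant statement is \cite[Lemma 6.5]{Yu:200ve} (ultimately going back to the argument in \cite{Yu:1997kb}), and the proof there transfers without change to the present uniform setting because all the constants involved — the Lipschitz constant $c$, the modulus of equicontinuity, and the resolutions $\epsilon_m$ — are by hypothesis uniform over $n$ and $i$.
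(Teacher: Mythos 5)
Your reduction of the lemma to (i) functoriality of $g\mapsto g_*$, (ii) $(\mathrm{id})_*=\mathrm{id}$, and (iii) strong Lipschitz homotopy invariance is the same first step the paper takes, and your arguments for (i) and (ii) match. The key step (iii), however, you prove by a genuinely different route. The paper establishes only the special case it needs, that $g_*=\mathrm{id}$ whenever $g$ is strongly Lipschitz homotopic to the identity, and does so by an Eilenberg swindle: it builds an infinite family of covering isometries $V_k(t)$ interpolating along the homotopy (with $V_0$ covering $g$ and $V_k=I$ for $k$ large relative to $t$), forms the infinite direct sums $a=\oplus_{k\geq0}Ad(V_k)(u)\,\mathrm{diag}(u^*,I)$ and $c=I\oplus\oplus_{k\geq1}(\cdots)$ inside a stabilized algebra $A^*_{L,0}(Y_0)_\infty$, uses equicontinuity (conditions \eqref{fk}, \eqref{fl}) to show $[a]=[c]$, and cancels to get $g_*[u]-[u]=[ac^*]=0$. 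You instead propose full homotopy invariance via a path of $*$-homomorphisms $\Phi:A^*_{L,0}(Y)^+\to M_2(C([0,1],A^*_{L,0}(Y')^+))$ obtained from an $(s,t)$-continuous family of covering isometries $V_s(t)$, concluding from $\mathrm{ev}_{0,*}=\mathrm{ev}_{1,*}$. Both strategies are legitimate and rest on exactly the same ingredients (covering isometries, propagation $\to 0$ forced by equicontinuity, uniformity over $n,i$); the swindle is more economical in that it needs interpolation in $t$ only and makes all constructions at the level of a single unitary, while your operator-homotopy approach is conceptually cleaner (it really is the standard proof that homotopic maps induce the same map on $K$-theory) but requires the more delicate two-parameter interpolation meshing the $s$-subdivision of $[0,1]$ with the $t$-subdivision of $[0,\infty)$, which you flag but do not fully carry out. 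One small inaccuracy: you cite \cite[Lemma 6.5]{Yu:200ve} as the non-equivariant model and say "the proof there transfers without change," but the proof there is precisely the Eilenberg-swindle proof the paper follows, not the path-of-isometries proof you are proposing, so that reference supports the paper's argument rather than yours.
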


\begin{proof}
Using Definition \ref{lipisom}, it certainly suffices to show that if $g=\{g_{n,i}:Y_{n,i}\to Y_{n,i}\}$ is a Lipschitz map that is strongly Lipschitz homotopic to the identity, then the map induced on $K$-theory $g_*:K_*(A^*_{L,0}(Y))\to K_*(A^*_{L,0}(Y))$ is the identity (functoriality and symmetry complete the argument).  Let then $F=\{F_{n,i}:Y_{n,i}\times[0,1]\to Y_{n,i}\}$ be such that $F_{n,i}(y,0)=g_{n,i}(y)$ and $F_{n,i}(y,1)=y$ for all $n,i$ and $y\in Y_{n,i}$.

For any $k,l\in\N$, define
$$
s_{k,l}=\left\{\begin{array}{ll} \frac{k}{l+1} & k\leq l+1 \\ 1 & k>l+1 \end{array}\right.,
$$
and note there exists a sequence $(\epsilon_l)_{l\in\N}$ of positive numbers that tends to zero, and so that for all ${n,i}$ and all $y\in Y_{n,i}$,
\begin{equation}\label{fk}
d_{Y_{n,i}}(F_{n,i}(s_{k+1,l},y),F_{n,i}(s_{k,l},y))<\epsilon_l
\end{equation}
and 
\begin{equation}\label{fl}
d_{Y_{n,i}}(F_{n,i}(s_{k,l+1},y),F_{n,i}(s_{k,l},y))<\epsilon_l.
\end{equation}
Let now $\mathcal{H}_{n,i}$ be the Hilbert space used to define $C^*(Y_{n,i})$ in the usual way and for each $k,l,n,i$ let 
$$V_{n,i,k,l}:\mathcal{H}_{n,i}\to\mathcal{H}_{n,i}$$
be any isometry such that 
$$
\text{supp}(V_{n,i,k,l})\subseteq \{(x,y)\in Y_{n,i}\times Y_{n,i}~|~d(F(s_{k,l},x),y)<\epsilon_l\}
$$ 
for all $k,l$ and so that $V_{n,i,k,l}=I$ for $k\geq l+1$.  Define further 
$$
V_{k,l}=\oplus_{n,i}V_{n,i,k,l}:\oplus_{n,i}\mathcal{H}_{n,i}\to\oplus_{n,i}\mathcal{H}_{n,i}
$$
and finally a family of isometries
$$
V_k(t):(\oplus_{n,i}\mathcal{H}_{n,i})\oplus(\oplus_{n,i}\mathcal{H}_{n,i})\to (\oplus_{n,i}\mathcal{H}_{n,i})\oplus(\oplus_{n,i}\mathcal{H}_{n,i})
$$
(where $t$ is now taken in $[0,\infty)$) by the formula
$$
V_k(t)=R(t-l+1)\begin{pmatrix} V_{k,l-1} & 0 \\ 0 & V_{k,l}\end{pmatrix}R(t-l+1)^*,
$$
for $l-1\leq t<l$, where $R(t)$ is as in Definition \ref{lipisom}.  Note that each $V_k$ is a covering isometry for $g$ as in that definition, and in particular defines a $*$-homomorphism
$$
Ad(V_k):A^*_{L,0}(Y)^+\to M_2( A^*_{L,0}(Y)^+)
$$
as described there (this uses the property in line \eqref{fl}) above.

Now, $A^*_{L,0}(Y)$ is stable, whence any element of $K_1(A^*_{L,0}(Y))$ can be represented by a single unitary $u\in A^*_{L,0}(Y)^+$.  Note that we may use Hilbert spaces
$$\mathcal{H}_{n,i}^\infty:=\oplus_{k=0}^\infty\mathcal{H}_{n,i}$$
in place of the $\mathcal{H}_{n,i}$ to define a new $C^*$-algebra $A^*_{L,0}(Y_0)_\infty$, which is abstractly isomorphic to $A^*_{L,0}(Y_0)$, and into which the latter algebra embeds naturally as a corner; this embedding induces an isomorphism on $K$-theory.  From now on we work inside $A^*_{L,0}(Y_0)_\infty$, in particular considering $u$ as an element of $A^*_{L,0}(Y_0)_\infty^+$ via this corner embedding.  

Form
$$
a:=\oplus_{k\geq0}Ad(V_k)(u)\begin{pmatrix} u^* & 0 \\ 0 & I \end{pmatrix};
$$
that $a$ is a unitary element in $M_2(A^*_{L,0}(Y_0)_\infty)$ follows from the fact that for any fixed $l$, 
$$(Ad(V_k)(u))(l)=\begin{pmatrix} u(l) & 0 \\ 0 & 0 \end{pmatrix}$$ 
for all $k$ suitably large.  Using property \eqref{fk} above, $a$ is equivalent in $K$-theory to
$$
b:=\oplus_{k\geq 1}Ad(V_k)(u)\begin{pmatrix} u^* & 0 \\ 0 & I \end{pmatrix}
$$
which is in turn clearly equivalent to 
$$
c:=I \oplus\oplus_{k\geq 1}Ad(V_k)(u)\begin{pmatrix} u^* & 0 \\ 0 & I \end{pmatrix}.
$$
Hence in $K_1(A^*_{L,0}(Y_0)_\infty)$ we have the identities
$$
0=[a]-[c]=[ac^*]=\Big[Ad(V_0)(u)\begin{pmatrix} u^* & 0 \\ 0 & I\end{pmatrix}\oplus_{k\geq 1} I\Big]=g_*[u]-[u],
$$
using the fact that $V_0(t)$ is a covering isometry for $g$.  Hence $g_*[u]=[u]$ in $K_1(A^*_{L,0}(Y_0)_\infty)$, whence also in $K_1(A^*_{L,0}(Y_0))$.  The case of $K_0$ can be handled similarly using a suspension argument, and we are done.
\end{proof}

\begin{simlem}\label{simlem}
Let $Y=\{Y_{n,i}\}$ be a collection as in line \eqref{ycol} above, where each $Y_{n,i}$ is a single simplex.  Then the evaluation-at-zero map as in line \eqref{aezero} above induces an isomorphism on $K$-theory.
\end{simlem}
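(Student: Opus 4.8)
The plan is to use a strong Lipschitz homotopy to reduce to the case in which each $Y_{n,i}$ is a single point, where the kernel of the evaluation-at-zero map can be shown to be contractible directly. A simplex is convex, so for each $n,i$ the straight-line homotopy $F_{n,i}(y,t)=(1-t)y+tx_{n,i}$ is a strong Lipschitz homotopy from the identity of $Y_{n,i}$ to the constant map at $x_{n,i}\in Y_{n,i}$; since all the $Y_{n,i}$ have diameter at most the uniform constant $s$ of line \eqref{ycol}, the slice-Lipschitz constants of the $F_{n,i}$ are bounded by $1$ and the restrictions to the segments $\{y\}\times[0,1]$ form an equicontinuous family, with all constants independent of $n$ and $i$. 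Writing $Y'=(\{x_{n,i}\}_{i=1}^{k_n})_{n\in\N}$ for the associated collection of one-point spaces, equipped with the same coefficient open sets $U_{n,i}$, it follows that $Y$ and $Y'$ are strongly Lipschitz homotopy equivalent in the sense of Definition \ref{liphom} (the constant maps and the inclusions of the points being mutually inverse up to strong Lipschitz homotopy). Lemma \ref{liplem} thus yields an isomorphism $K_*(A_{L,0}^*(Y))\cong K_*(A_{L,0}^*(Y'))$, where $A_{L,0}^*(\cdot)$ denotes the kernel of the evaluation-at-zero map of line \eqref{aezero}; note that the covering isometries of Definition \ref{lipisom} act only on the Hilbert spaces $l^2(Z_{n,i},\Hi)$, so the coefficient factors $\mathcal{A}(U_{n,i})$ and the uniform constants in conditions 2--7 of Definition \ref{twisted} are untouched by this reduction.

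When each $Y_{n,i}'$ is a single point, every operator on $l^2(Z_{n,i},\Hi)$ has propagation zero, so the propagation condition in the definition of $A_L(Y')$ is vacuous: $A_L^*(Y')$ is just (the closure of) the algebra of bounded, uniformly continuous maps $[0,\infty)\to A(Y')$, the evaluation $e$ is surjective (split by the constant maps), and $A_{L,0}^*(Y')$ is the ideal of such maps that vanish at $0$. For $s\in[0,1]$ define $\phi_s$ by $(\phi_s\mathbf{f})(t)=\mathbf{f}(st)$. Reparametrising the variable $t$ preserves boundedness, uniform continuity, pointwise membership in $A(Y')$, and the uniform constants of conditions 2, 3, 6, 7 of Definition \ref{twisted} (none of which involves $t$), while $(\phi_s\mathbf{f})(0)=0$; so each $\phi_s$ is a $*$-endomorphism of $A_{L,0}^*(Y')$, the map $s\mapsto\phi_s\mathbf{f}$ is norm-continuous by uniform continuity of $\mathbf{f}$, $\phi_1=\mathrm{id}$ and $\phi_0=0$. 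Hence $A_{L,0}^*(Y')$ is contractible and $K_*(A_{L,0}^*(Y'))=0$.

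Combining the two preceding steps, $K_*(A_{L,0}^*(Y))=0$. Finally, $e:A_L^*(Y)\to A^*(Y)$ is surjective for collections of simplices too: given $\mathbf{T}\in A(Y)$, whose components automatically have propagation at most $s$, the geodesic contractions of the simplices above, implemented by covering isometries varying continuously in $t$ and equal to the identity at $t=0$, produce an element of $A_L(Y)$ evaluating to $\mathbf{T}$ at $0$ and with propagation tending to $0$; this is the uniform, coefficient-twisted counterpart of the surjectivity of localization-algebra evaluation for finite complexes from \cite{Yu:1997kb}. The six-term exact sequence of $0\to A_{L,0}^*(Y)\to A_L^*(Y)\xrightarrow{e}A^*(Y)\to 0$ now shows that $e_*$ is an isomorphism.

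The step I expect to demand the most care is the first: one must confirm that the individual straight-line contractions genuinely assemble into a single strong Lipschitz homotopy of \emph{collections} with all constants independent of $n$ and $i$, so that Lemma \ref{liplem} applies verbatim, and that passing from $Y$ to $Y'$ really does leave the coefficient algebras $\mathcal{A}(U_{n,i})$ and conditions 2--7 of Definition \ref{twisted} unchanged. The surjectivity of $e$ in the last step is routine but also needs the covering isometries to be chosen compatibly.
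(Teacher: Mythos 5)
Your overall strategy matches the paper's in its first reduction (strong Lipschitz homotopy of the simplices to their basepoints, appealing to Lemma \ref{liplem}) and in using the short exact sequence $0\to A^*_{L,0}(Y)\to A^*_L(Y)\to A^*(Y)\to 0$; your explicit surjectivity check for $e$ is a fine addition that the paper elides. However, the central step — showing $K_*(A^*_{L,0}(Y'))=0$ when $Y'$ consists of singletons — contains a genuine error.

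The reparametrization $\phi_s(\mathbf{f})(t)=\mathbf{f}(st)$ is \emph{not} a norm-continuous path of $*$-endomorphisms of $A^*_{L,0}(Y')$. The norm on the localization algebra is the supremum norm $\|\mathbf{f}\|=\sup_{t\ge0}\|\mathbf{f}(t)\|$, and uniform continuity of $\mathbf{f}$ controls $\|\mathbf{f}(a)-\mathbf{f}(b)\|$ only when $|a-b|$ is small; here $|st-s't|=|s-s'|\,t$ is unbounded in $t$. Concretely, take $\mathbf{f}(t)=\sin(t)\cdot a$ for a fixed nonzero element $a$ of $A(Y')$: then $\mathbf{f}(0)=0$, $\mathbf{f}$ is bounded and uniformly continuous, but for any $s\ne s'$ one has $\sup_{t}\,|\sin(st)-\sin(s't)|=2$, so $\|\phi_s(\mathbf{f})-\phi_{s'}(\mathbf{f})\|=2\|a\|$. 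Thus $s\mapsto\phi_s$ is nowhere continuous away from $s'=s$, and the claimed contraction of $A^*_{L,0}(Y')$ does not exist. (Indeed one should not expect this algebra to be contractible as a $C^*$-algebra; it merely has trivial $K$-theory.) The argument you want instead is the one the paper actually gives: use translations $u_s(t)=u(t-s)$ (extended by $I$), which \emph{are} norm-continuous in $s$ by uniform continuity precisely because they shift the argument by a bounded amount independent of $t$, and then run an Eilenberg swindle on the infinite direct sum $\oplus_k u_k$ inside a stabilized copy $A^*_{L,0}(Y')_\infty$ to conclude $[u]=0$ in $K_1$, handling $K_0$ by suspension. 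Your scaling idea is the right instinct for a genuine cone $C_0([0,\infty),\,\cdot\,)$, but it fails for the bounded-uniformly-continuous function algebra underlying the localization construction.
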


\begin{proof}
Considering the short exact sequence 
$$
0\to A^*_{L,0}(Y)\to A^*_L(Y)\stackrel{e}{\to}A^*(Y)\to0,
$$
it is enough to show that $K_*(A^*_{L,0}(Y))=0$.  Let $Y_0=((\{x_{n,i}\})_{i=1}^{k_n})_{n\in\N}$ be the collection where each element is the singleton $\{x_{n,i}\}$; as the collections $Y$ and $Y_0$ are clearly strong Lipschitz homotopy equivalent, it suffices by Lemma \ref{liplem} to prove that $K_*(A^*_{L,0}(Y_0))=0$.  This we will do using an Eilenberg swindle.  We will consider only $K_1(A^*_{L,0}(Y_0))$; the case of $K_0$ can be handled similarly using a suspension argument.

Now, just as in the proof of Lemma \ref{liplem}, stability of $A^*_{L,0}(Y_0)$ implies that any element of $K_1(A^*_{L,0}(Y_0))$ can be represented by a single unitary, say $u$, in the unitization $A^*_{L,0}(Y_0)^+$.  For each $s\in[0,\infty)$, consider
$$
u_s(t):=\left\{\begin{array}{ll} I & 0\leq t\leq s \\ u(t-s) & s\leq t \end{array}\right.
$$
(note that as $u\in A^*_{L,0}(Y_0)^+$, the two halves match up continuously at $t=s$).  For each $n,i$, let $\mathcal{H}_{n,i}$ denote the Hilbert space used in the definition of $C^*(\{x_{n,i}\})$ (thus $\mathcal{H}_{n,i}$ is an infinite dimensional separable Hilbert space, with the unit action of $C(\{x_{n,i}\})\cong\C$).  Just as in the proof of Lemma \ref{liplem}, use new Hilbert spaces
$$\mathcal{H}_{n,i}^\infty:=\oplus_{k=0}^\infty\mathcal{H}_{n,i}$$
to define a new $C^*$-algebra $A^*_{L,0}(Y_0)_\infty$, which is abstractly isomorphic to $A^*_{L,0}(Y_0)$, and into which the latter algebra embeds naturally as a corner; this embedding induces an isomorphism on $K$-theory.

Define now 
$$
u_\infty=\oplus_{k=0}^\infty u_k\in A^*_{L,0}(Y_0)_\infty^+
$$
(we use here that $Y_0$ is a union of single points to note that the propagation of each $u_\infty$ is controlled -- in fact, zero).  On the level of $K$-theory, however,
$$
[u]+[u_\infty]=[u \oplus \oplus_{k=1}^\infty u_k]=[u_\infty]
$$
by homotoping all the $u_k$s `one step to the right'.  Hence $[u]=0$ in $K_1(A^*_{L,0}(Y_0)_\infty)$, and so also in $K_1(A^*_{L,0}(Y_0))$.
\end{proof}

\begin{isolem}\label{isolem}
Let 
$$
O=(\sqcup_{i=1}^{k_n}\Ga\cdot U_{n,i})_{n\in\N}
$$
be our original basic collection.  For each $R>0$, $n\in\N$ and $i=1,...,k_n$, let $\Delta_{n,i}^R$ be the simplex in $P_R(\wgn)$ with vertices $\{x\in\wgn ~|~d(x,x_{n,i})\leq R/2\}$, and $Y_\Delta^R$ be the collection $\{\Delta^R_{n,i}\}$ (which of course satisfies the conditions after line \eqref{ycol}).  Then 
$$
\lim_{R\to\infty}\umtwist_O\cong \lim_{R\to\infty}A^*(Y_\Delta^R)
$$ 
and
$$
\lim_{R\to\infty}\ulmtwist_O\cong \lim_{R\to\infty}A_L^*(Y_\Delta^R);
$$ 
moreover, these isomorphisms commute with the natural evaluation at zero maps.
\end{isolem}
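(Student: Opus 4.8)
The plan is to construct the isomorphisms directly and block-by-block: a basic collection decomposes the twisted Roe algebra over $O$ into pieces indexed by the orbits $\Ga_n\cdot U_{n,i}$, and each such piece is, once $R$ is large, simply the Roe algebra of a single bounded simplex with coefficients in $\mathcal{A}(U_{n,i})$. Fix a basic collection with $O_n=\sqcup_{i=1}^{k_n}\Ga_n\cdot U_{n,i}$, $U_{n,i}\subseteq B_r(f(x_{n,i}))$ and $k_n\leq k$ uniformly (Definition \ref{basicp}). The first step is the orbit decomposition. Since the sets $\Ga_n\cdot U_{n,i}$ ($i=1,\dots,k_n$) are pairwise disjoint, the ideals $\mathcal{A}(\Ga_n\cdot U_{n,i})$ of $\ahn$ are pairwise orthogonal (Definition \ref{suppv}), so matrix coefficients supported over distinct orbits multiply to zero; hence $\C[P_R(\wgn);\ahn]^{\Ga_n}_{O_n}$ is the internal, orthogonal direct sum of the sub-$*$-ideals $\C[P_R(\wgn);\ahn]^{\Ga_n}_{\Ga_n\cdot U_{n,i}}$, and, because $k_n$ is uniformly bounded, this decomposition respects the uniformity conditions of Definition \ref{unitwist} and passes to $\uatwist_O$.

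The second, and main, step identifies, for all sufficiently large $R$, the orbit-piece $\C[P_R(\wgn);\ahn]^{\Ga_n}_{\Ga_n\cdot U_{n,i}}$ with $C^*(\Delta^R_{n,i})\hat{\otimes}\mathcal{A}(U_{n,i})$ (the $i$-th summand of $A(Y_\Delta^R)$ at level $n$); recall $C^*(\Delta^R_{n,i})\cong\mathcal{K}$ since $\Delta^R_{n,i}$ is bounded. The key point is a localization estimate that is \emph{uniform in $n$}: if $T_{x,y}$ is a nonzero matrix coefficient of an element of this piece, then $\text{supp}(T_{x,y})\subseteq B_{r_2}(f(x))$ by condition 5 of Definition \ref{twisted}, while $\text{supp}(T_{x,y})$ must also meet some translate $g\cdot U_{n,i}\subseteq B_r(f(g\cdot x_{n,i}))$; since $f$ is an equivariant coarse embedding whose distortion functions $\rho_\pm$ are the same across the whole sequence, this forces $d(x,g\cdot x_{n,i})\leq\rho_-^{-1}(r+r_2)$, and then $d(y,g\cdot x_{n,i})$ is bounded in terms of the same quantity and the propagation bound $r_1$. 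Consequently, once $R$ exceeds a threshold depending only on $r,r_1,r_2,\rho_\pm$ (and not on $n$), every nonzero $T_{x,y}$ has both indices inside one translate $g\cdot\Delta^R_{n,i}$. Using $\Ga_n$-equivariance together with freeness and properness of the action, the whole operator is then determined by, and can be freely rebuilt from, its block over $\Delta^R_{n,i}$ relative to a fixed choice of orbit representatives; that block lies in $C^*(\Delta^R_{n,i})\hat{\otimes}\mathcal{A}(U_{n,i})$, using $x_{n,i}\in\Delta^R_{n,i}$, $U_{n,i}\subseteq B_r(f(x_{n,i}))$ and the fact that $U_{n,i}$ pulls back from a ball in $W_k(x_{n,i})$ to match the support and regularity conditions of Definition \ref{twisted}. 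Running this over $i$ and $n$ with uniform constants yields a $*$-isomorphism $\uatwist_O\cong A(Y_\Delta^R)$ for all large $R$, and these identifications are compatible, as $R$ increases, with the connecting maps of both direct systems.

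The third step is to pass to completions and match the norms. Under the identifications above, an element of $\uatwist_O$ becomes a uniformly controlled sequence of mutually orthogonal blocks $S^{(n,i)}\in\mathcal{K}\hat{\otimes}\mathcal{A}(U_{n,i})$; one must check that the norm inherited from $\umtwist$ coincides with $\sup_{n,i}\|S^{(n,i)}\|$, the norm defining $A^*(Y_\Delta^R)$. The inequality $\leq$ is immediate: restricting any $*$-representation $\pi$ of $\uatwist$ to the orthogonal ideals gives $\|\pi(\mathbf{T})\|=\sup_{n,i}\|\pi(S^{(n,i)})\|\leq\sup_{n,i}\|S^{(n,i)}\|$. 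For the reverse inequality one uses the structure of the coefficient algebras $\mathcal{A}(U_{n,i})$ — $C^*$-algebras of balls in real Hilbert spaces of uniformly bounded dimension — to see that block representations extend to representations of the whole uniform product, exactly as in the proof of \cite[Lemma 6.4]{Yu:200ve}; this yields $\umtwist_O\cong A^*(Y_\Delta^R)$ in the direct limit over $R$. The localization version runs word for word in parallel, the only extra observation being that the identification of Step 2 intertwines the propagation conditions $\sup_n\text{prop}(f^{(n)}(t))\to0$ and $\sup_{n,i}\text{prop}(f^{(n,i)}(t))\to0$; hence $\ulmtwist_O\cong A^*_L(Y_\Delta^R)$ in the limit. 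Finally, both evaluation-at-zero maps are restriction of the $[0,\infty)$-parameter to $0$, and the $R$-isomorphisms are constructed identically in the localized and non-localized cases, so they intertwine the two copies of $e$; this is the last clause.

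I expect the main obstacle to be Step 2: ensuring that the localization estimate, and hence the threshold for $R$, is genuinely uniform across all $n$ at once — this is where the slogan `equivariance forces uniformity' does real work, since for a Margulis-type family one has no control over the individual groups $\Ga_n$ but the embeddings $f_n$ have uniform distortion — and doing the attendant bookkeeping cleanly, in particular passing from `$x$ is within a bounded distance of $g\cdot x_{n,i}$ in $P_R(\wgn)$' to `$x$ lies in the subcomplex $g\cdot\Delta^R_{n,i}$' (which may force replacing $R/2$ by a smaller fraction of $R$, or some care with the metric on the Rips complex) and tracking the dense subsets $Z_R$ and the fundamental domains for the $\Ga_n$-actions consistently with \cite{Willett:2010ud}. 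The norm-matching in Step 3 is the second point requiring genuine care, but it is essentially contained in \cite[Lemma 6.4]{Yu:200ve}.
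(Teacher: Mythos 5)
Your overall plan — decompose $\uatwist_O$ into orthogonal blocks indexed by the orbits $\Ga_n\cdot U_{n,i}$, identify each block with a corner of the auxiliary algebra via the coarse-embedding localization, then pass to completions — is in the same spirit as the paper's proof, but your Step 3 contains a genuine gap, and you have the easy and hard directions of the norm comparison reversed. The step you call ``immediate,'' namely $\|\pi(S^{(n,i)})\|\leq\|S^{(n,i)}\|$ where the right-hand norm is computed in $\mathcal{K}\hat\otimes\mathcal{A}(U_{n,i})$, is precisely the nontrivial content: $\pi$ is an arbitrary $*$-representation of the algebraic $*$-algebra $\uatwist_O$, and a $*$-representation of a dense $*$-subalgebra of a $C^*$-algebra does \emph{not} in general extend continuously to the completion, so there is no a priori reason this inequality holds. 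The ``reverse inequality'' that you defer to \cite[Lemma 6.4]{Yu:200ve} is actually the trivial one — the supremum-over-blocks norm is a particular $C^*$-norm on the algebraic algebra, hence is dominated by the universal (maximal) norm by definition. The paper's proof addresses the hard direction head-on: after establishing the algebraic isomorphism $\uatwist_O\cong\lim_{S\to\infty}A(Y^S)^\Ga$, it observes that the unitization of $\lim_S A(Y^S)$ is \emph{inverse-closed} in the unitization of $\lim_S A^*(Y^S)$, and this spectral permanence forces the maximal $C^*$-norm to agree with the spatial one. That inverse-closedness claim is the heart of the lemma and is missing from your argument.

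A secondary structural difference worth noting: the paper does not attempt to identify $\uatwist_O$ with $A(Y^R_\Delta)$ at a single fixed large $R$. For a fixed Rips parameter $R$, the algebra $\uatwist_O$ contains elements of arbitrarily large finite propagation $r_1$ and support radius $r_2$, so no single simplex $\Delta^R_{n,i}$ captures all of them; your localization estimate produces a threshold depending on the element, not just on $R$. The paper circumvents this by fixing $R$ and introducing a second parameter $S$, taking the algebraic limit $\lim_{S\to\infty}A(Y^S)^\Ga$ (where $Y^S_{n,i}$ are balls of radius $S$, not the simplices $\Delta^R_{n,i}$), and only afterward taking the direct limit in $R$ and observing that the two nested families are cofinal. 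Your approach can likely be made to work, but only at the level of direct limits, with the threshold bookkeeping done explicitly — which is essentially re-deriving the role of the auxiliary parameter $S$. Either way, you still need the inverse-closedness step to finish.
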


\begin{proof}
Fix for the moment $R>0$, and for each $S>0$, let $Y^S$ be the collection of sets as in line \eqref{ycol} above defined by $Y^S_{n,i}=\{x\in P_R(\wgn)~|~d(x,x_{n,i})\leq S\}$.  Define now $A(Y^S)^\Ga$ to be the $*$-algebra of all sequences $(T^{(0)},T^{(1)},...,)$
such that each $T^{(n)}$ is of the form
$$
T^{(n)}=(g\cdot T^{(n,1)},...,g\cdot T^{(n,k_n)})_{g\in\Ga}\in \prod_{g\in\Ga}C^*(g \cdot Y^S_{n,i})\hat{\otimes} \mathcal{A}(g\cdot U_i),
$$
where the $T^{(n,i)}$ are just as in the definition of $A(Y^S)$.  Clearly, $A(Y^S)$ is $*$-isomorphic to $A(Y^S)^\Ga$, and moreover it is not hard to check that there is an algebraic isomorphism
$$
\uatwist\cong \lim_{S\to\infty}A(Y^S)^\Ga,
$$
whence
$$
\uatwist\cong \lim_{S\to\infty}A(Y^S).
$$
However, the unitization of the algebra on the right is inverse-closed inside the unitization of $\lim_{S\to\infty}A^*(Y^S)$, whence its universal closure simply is the $C^*$-algebra $\lim_{S\to\infty}A^*(Y^S)$.  The same is thus true for $\uatwist$ of course, i.e. the above map extends to a $*$-isomorphism
$$
\umtwist\cong \lim_{S\to\infty}A^*(Y^S)
$$
of $C^*$-algebras.  The first displayed line in the lemma follows directly from this, and the second is similar.
\end{proof}
 
Finally, we are ready to give the proof of Lemma \ref{basiclem}.

\begin{proof}[Proof of Lemma \ref{basiclem}]
From Lemma \ref{isolem} it is enough to prove that the evaluation-at-zero maps induce isomorphisms
$$
e_*:K_*(A^*_L(Y_\Delta^R))\to K_*(A^*(Y^R_\Delta))
$$
for all $R$.  This is immediate from Lemma \ref{simlem}, however.
\end{proof}

Now, for any $r>0$ and $n\in\N$, define
\begin{equation}\label{gnr}
(\wgn)_{r}=\cup_{x\in\wgn}B_r(f_n(x))
\end{equation}
to be the `generalized $r$-neighborhood' of $\wgn$ in $\mathcal{V}_n\times\R_+$.  The following lemma splits up each $(\wgn)_{r}$ into basic pieces.

\begin{splitlem}\label{splitlem}
For each $r>0$ there exists $k_r>0$ and $N_r\in\N$ such that for each $n$ one can write 
$$(\wgn)_{r}=\cup_{j=1}^{N_r}O_r^j,$$
where each $O_r^j$ is an  ($r,k_r$)-basic set as in Definition \ref{basicp}.
\end{splitlem}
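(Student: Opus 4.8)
The plan is to read off the decomposition from a $\Ga_n$-equivariant colouring of the vertex set of $G_n$ whose number of colours is bounded using bounded geometry: after lifting a colour class to $\wgn$ and thickening it by radius-$r$ balls one gets, automatically, a disjoint union of $\Ga_n$-orbits -- hence a basic set -- and the colour classes together exhaust $(\wgn)_r=\bigcup_{x\in\wgn}B_r(f_n(x))$.

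The two inputs are as follows. Bounded geometry of $X=\sqcup G_n$ gives, for each $s>0$, a constant $C_s<\infty$ independent of $n$ bounding the number of vertices of $G_n$ (equivalently of $\wgn$) in any ball of radius $s$. And since $\pi_n\colon\wgn\to G_n$ is a covering map of graphs, paths project and lift isometrically, so $d_{\wgn}(\pi_n^{-1}(v),\pi_n^{-1}(w))=d_{G_n}(v,w)$ for distinct vertices $v,w$ of $G_n$ (the large-girth hypothesis is not needed for this). Combining the latter with the identity $\|f_n(x)-f_n(y)\|^2_{\mathcal{V}_n}=d_{\wgn}(x,y)$ from line \eqref{distort} and the elementary fact that balls $B_r(u),B_r(u')$ in $\mathcal{V}_n\times\R_+$ are disjoint once $\|u-u'\|\geq 2\sqrt r$, one sees: if $v\neq w$ and $d_{G_n}(v,w)\geq 4r$ then, for any lifts $\tilde v,\tilde w$, the orbits $\Ga_n\cdot B_r(f_n(\tilde v))$ and $\Ga_n\cdot B_r(f_n(\tilde w))$ are disjoint.

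Now let $H_n$ be the graph on the vertices of $G_n$ with an edge between distinct $v,w$ exactly when $d_{G_n}(v,w)<4r$. By the first input its maximal degree is at most $C_{4r}-1$, so a greedy argument colours it with $N_r:=C_{4r}$ colours, independently of $n$; pull this colouring back along $\pi_n$ to a $\Ga_n$-invariant partition of the vertex set of $\wgn$ into $V_n^1,\dots,V_n^{N_r}$. For each $j$ choose a lift $\tilde v$ of every vertex of $G_n$ lying in $V_n^j$, and put
$$ O_r^j:=\bigsqcup_{\tilde v}\Ga_n\cdot B_r(f_n(\tilde v)). $$
The union really is disjoint, by the second input and the fact that distinct vertices of $G_n$ of the same colour are at distance at least $4r$. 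Each ball $B_r(f_n(\tilde v))$ is, for any fixed $k\geq 0$, the pullback of the ball $\{(u,s)\in W_k(\tilde v)\times\R_+:s^2+\|f_n(\tilde v)-u\|^2<r\}$ under the projective-limit map $\mathcal{V}_n\times\R_+\to W_k(\tilde v)\times\R_+$, which is a one-line computation using $f_n(\tilde v)\in W_k(\tilde v)$ and the Pythagorean decomposition of $\|f_n(\tilde v)-w\|^2$ into its components along and orthogonal to $W_k(\tilde v)$; so each $O_r^j$ is $(r,k_r)$-basic for any fixed $k_r$ (say $k_r=1$). Finally $\bigcup_{j=1}^{N_r}O_r^j=\bigcup_{\tilde x\in\wgn}B_r(f_n(\tilde x))=(\wgn)_r$ by \eqref{gnr}, and $N_r,k_r$ are independent of $n$ (and of $R$).

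There is no genuine obstacle: the substantive step is the colouring, which is completely routine once $C_{4r}$ is in hand. The points to be careful about are bookkeeping ones -- that the number of $\Ga_n$-orbits inside a single $O_r^j$ is allowed to grow with $n$ (only $r$, the colour count $N_r$ and the dimension parameter $k_r$ need be uniform), and that the balls $B_r(f_n(\tilde v))$ literally have the pullback-of-a-ball form required by Definition \ref{basicp}.
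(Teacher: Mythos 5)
The coloring argument you use is essentially the same idea the paper uses for $n$ large -- both reduce to a bounded-geometry coloring bound, and your pullback-of-a-ball computation and your computation of inter-orbit distances via line~\eqref{distort} are all correct. But there is a genuine gap: you only establish that orbits attached to \emph{different} chosen lifts $\tilde v,\tilde w$ (same colour) are disjoint; you never address whether the translates $g\cdot B_r(f_n(\tilde v))$, $g\in\Ga_n$, within a \emph{single} orbit are pairwise disjoint. That intra-orbit disjointness is needed for what comes next -- in the proof of Lemma~\ref{isolem} the twisted algebra supported on an orbit $\Ga_n\cdot U_{n,i}$ is unravelled into a $\Ga_n$-indexed product of algebras supported on the individual translates $g\cdot U_{n,i}$, which only makes sense if those translates are disjoint, and that is exactly what the paper's proof of Lemma~\ref{splitlem} goes to the trouble of establishing. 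For $U_{n,i}=B_r(f_n(\tilde v))$ one has $g\cdot B_r(f_n(\tilde v))=B_r(f_n(g\tilde v))$, so intra-orbit disjointness amounts to $d_{\wgn}(\tilde v,g\tilde v)\geq 4r$ for all $g\neq e$; since $d_{\wgn}(\tilde v,g\tilde v)$ is the length of a non-backtracking loop in $G_n$ through $\pi_n(\tilde v)$, this holds precisely when $\text{girth}(G_n)\geq 4r$.

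This is where your parenthetical ``(the large-girth hypothesis is not needed for this)'' goes astray. It is true that $d_{\wgn}(\pi_n^{-1}(v),\pi_n^{-1}(w))=d_{G_n}(v,w)$ needs no girth hypothesis, but the large-girth (asymptotic faithfulness) hypothesis is exactly what guarantees $\text{girth}(G_n)\geq 4r$ for all $n$ past some $M=M_r$, hence intra-orbit disjointness for those $n$. For the finitely many $n<M_r$ your construction may simply fail: $\Ga_n\cdot B_r(f_n(\tilde v))$ need not decompose. The paper handles these finitely many exceptional $n$ by a separate argument -- for a fixed $n$, properness and cocompactness of the $\Ga_n$-action on $(\wgn)_r$ let you shrink the $U_{n,i}$ until the translates are disjoint, giving some $(r,k_n)$-basic cover by some $N_n$ sets -- and then takes $N_r=\max\{N_0,\dots,N_M\}$ and $k_r=\min\{k_0,\dots,k_{M-1}\}$. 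You would need to add this step (or an equivalent one) to make the proof complete.
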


\begin{proof}
Note that each $\Ga_n$ acts on $(\wgn)_r$ cocompactly; using the fact that pullbacks of balls as in the definition of a basic set form a basis for the topology on $\V_n\times\R_+$, it is not hard to see from here that for each $n$ there exists some $N_n$ so that $(\wgn)_r$ is covered by a union of $N_n$ $(r,k_n)$-basic sets for some $k_n>0$.  However, for all $n\geq M$ for some $M$ suitably large, the covering faithfulness property of the sequence $(\wgn)_{n\in\N}$ together with the uniformity of the coarse embeddings $f_n:\wgn\to \V_n$ guarantees that for any $x\in \wgn$, $\Ga_n\cdot B_r(f_n(x))$ is a disjoint union of the different sets $g\cdot B_r(f_n(x))$ as $g$ ranges over $\Ga_n$.  From this and uniform bounded geometry of the spaces $\wgn$, it follows that there exists $N_M$ such that for each $n\geq M$, $\wgn$ can be written as a union of at most $N_M$ $(r,0)$-basic sets, each of which is of the form
$$
O=\sqcup_{i=1}^{j}\Ga_n\cdot B_r(f_n(x_i))
$$
for some finite set $\{x_1,...,x_j\}\subseteq \wgn$.  Take $N_r=\max\{N_0,N_1,...,N_M\}$ and $k_n=\min\{k_0,...,k_{M-1}\}$.
\end{proof}

The next lemma relates algebraic operations on the algebras $\umtwist_O$ to set-theoretic operations on the collections $O$.  It is a close analogue of \cite[Lemma 6.3]{Yu:200ve}.

\begin{insum}\label{insum}
Fix $r>0$, $k\in\N$ and $N\in\N$.  For each $j=1,...,N$, let $(O^j_n)_{n\in\N}$ be a basic collection with respect to the parameters $r,k$; write
$$O^j_n=\sqcup_i \Ga\cdot U_{n,i}^j,$$
where the $U_{n,i}^j$ are as in the definition of an ($r,k$)-basic set; in particular, each $U_{n,i}^j$ may be considered as a subset of $W_k(x_{n,i}^j)\times\R_+$ for some $x^j_{n,i}\in P_R(\wgn)$.

For each $s>0$ and $n,i,j$, let $_sU_{n,i}^j$ denote the interior $s$-neighbourhood of $U_{n,i}^j$, i.e.\
$$
_sU_{n,i}^j:=\{x\in W_k(x_{n,i}^j)\times\R_+~|~d(x,(W_k(x_{n,i}^j)\times\R_+)\backslash U_{n,i}^j)>s\},
$$
an open (possibly empty) subset of $W_k(x_{n,i}^j)\times\R_+$, which we will also think of as an open subset of $\V_n\times\R_+$ via pullback.

Define a new (basic) collection $_sO^j$ to have n$^\text{th}$ component
$$
_sO^j_n:=\sqcup_i \Ga\cdot~ _sU_{n,i}^j.
$$
Let $_sO=(_sO_n)_{n\in\N}$ be the collection (not necessarily a basic collection) given by 
$$
{}_sO_n=\cup_{j=1}^{N-1}{}_sO^j_n.
$$
Then 
\begin{align*}
\lim_{s\to0}& \umtwist_{_sO\cup _sO^N} \\ &=\Big(\lim_{s\to0}\umtwist_{_sO}\Big)+\Big(\lim_{s\to0}\umtwist_{_sO^N}\Big)
\end{align*}
and 
\begin{align*}
\lim_{s\to0}&\umtwist_{_sO\cap _sO^N} \\ &=\Big(\lim_{s\to0}\umtwist_{_sO}\Big)\cap\Big(\lim_{s\to0}\umtwist_{_sO^N}\Big)
\end{align*}
and similarly in the case of the localization algebras.
\end{insum}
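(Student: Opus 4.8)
The plan is to follow the scheme of \cite[Lemma 6.3]{Yu:200ve}, taking advantage of the fact that here ``equivariance forces uniformity''. Two of the four inclusions are formal. The assignment $O\mapsto\umtwist_O$ is monotone under inclusion of collections (a support contained in a small set is contained in a larger one), and each $\lim_{s\to 0}$ is simply the norm-closure of the increasing union of the $C^*$-subalgebras $\umtwist_{{}_sO}$ of $\umtwist$; since ${}_sO\cap{}_sO^N\subseteq{}_sO,{}_sO^N\subseteq{}_sO\cup{}_sO^N$, this gives ``$\supseteq$'' in the first identity and ``$\subseteq$'' in the second, and also shows that $\lim_{s\to0}\umtwist_{{}_sO}$ and $\lim_{s\to0}\umtwist_{{}_sO^N}$ are contained in $\lim_{s\to0}\umtwist_{{}_sO\cup{}_sO^N}$. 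One further checks that $\umtwist_{{}_sO}$ and $\umtwist_{{}_sO^N}$ are \emph{ideals} in $\umtwist_{{}_sO\cup{}_sO^N}$: since $\ahn$ is a $C_0(\V_n\times\R_+)$-algebra one has $\text{supp}(cd)\subseteq\text{supp}(c)\cap\text{supp}(d)$ for $c,d\in\ahn\hat\otimes\mathcal{K}$, so matrix multiplication can only shrink supports. Hence the right-hand side of the first identity is a sum of two closed ideals, in particular norm-closed. It remains to produce the reverse inclusions on a dense subalgebra, and the tool is multiplication by suitable cut-off functions.

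\emph{The sum.} Fix $s>0$ and $\mathbf{T}=(T^{(n)})\in\uatwist_{{}_sO\cup{}_sO^N}$, so $\text{supp}(T^{(n)})\subseteq Z\times Z\times\bigcup_{j=1}^N{}_sO^j_n$. Fix $s'<s$; since the closed $(s-s')$-neighbourhood of ${}_sU$ is always contained in ${}_{s'}U$, one can build, orbitwise, a $\Ga_n$-invariant partition of unity $\{\psi^{(n)}_1,\dots,\psi^{(n)}_N\}$ with $\text{supp}(\psi^{(n)}_j)\subseteq{}_{s'}O^j_n$ and $\sum_j\psi^{(n)}_j\equiv 1$ on $\bigcup_j{}_sO^j_n$, where near any $f(x)$ each $\psi^{(n)}_j$ is the pullback of a bump function from the relevant finite-dimensional $W_k(x^j_{n,i})\times\R_+$. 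Multiplication by such a central function acts entrywise on $T^{(n)}$, hence changes neither the propagation nor the finiteness conditions 2--4 of Definition \ref{twisted}, only shrinks supports (condition 5), and affects conditions 6--7 merely by enlarging the parameters $k,K$ by amounts bounded in terms of $R,r,k,N$ and multiplying the derivative bound by a constant depending on $s'$ --- crucially all independent of $n$, since there are at most $Nk$ orbits $\Ga_n\cdot U^j_{n,i}$ and the $\wgn$ have uniform bounded geometry. Writing $\psi_j\mathbf{T}$ for $(\psi_j^{(n)}T^{(n)})$, we get $\mathbf{T}=\big(\sum_{j<N}\psi_j\mathbf{T}\big)+\psi_N\mathbf{T}$ with $\sum_{j<N}\psi_j\mathbf{T}\in\uatwist_{{}_{s'}O}$ and $\psi_N\mathbf{T}\in\uatwist_{{}_{s'}O^N}$. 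As such $\mathbf{T}$ are dense in $\umtwist_{{}_sO\cup{}_sO^N}$, and $s,s'$ run to $0$, this gives ``$\subseteq$''.

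\emph{The intersection.} Let $T$ lie in $(\lim_{s\to 0}\umtwist_{{}_sO})\cap(\lim_{s\to 0}\umtwist_{{}_sO^N})$ and let $\varepsilon>0$. Choose $\mathbf{A}\in\uatwist_{{}_aO}$ and $\mathbf{B}\in\uatwist_{{}_bO^N}$ with $\|\mathbf{A}-T\|<\varepsilon$ and $\|\mathbf{B}-T\|<\varepsilon$, and set $u=\min(a,b)$, so that $\mathbf{A}\in\uatwist_{{}_uO}$ and $\mathbf{B}\in\uatwist_{{}_uO^N}$. Take a $\Ga_n$-invariant cut-off $\phi^{(n)}$, again built orbitwise from pullbacks of bump functions on the spaces $W_k(x^N_{n,i})\times\R_+$, with $\phi^{(n)}\equiv1$ on ${}_uO^N_n$ and $\text{supp}(\phi^{(n)})\subseteq{}_{u/2}O^N_n$ (possible since the closed $(u/2)$-neighbourhood of ${}_uU$ lies in ${}_{u/2}U$). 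Then $\phi^{(n)}B^{(n)}=B^{(n)}$ because $\text{supp}(B^{(n)})\subseteq{}_uO^N_n$, while $\text{supp}(\phi^{(n)}A^{(n)})\subseteq{}_uO_n\cap{}_{u/2}O^N_n\subseteq{}_{u/2}O_n\cap{}_{u/2}O^N_n$, so the sequence $\phi\mathbf{A}:=(\phi^{(n)}A^{(n)})$ lies in $\uatwist_{{}_{u/2}O\cap{}_{u/2}O^N}$ (the verification of conditions 2--7 uniformly in $n$ being exactly as above). Hence $\|\phi\mathbf{A}-T\|\leq\|\phi(\mathbf{A}-\mathbf{B})\|+\|\mathbf{B}-T\|\leq\|\mathbf{A}-\mathbf{B}\|+\varepsilon<3\varepsilon$, and letting $\varepsilon\to0$ gives ``$\supseteq$''. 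The localization-algebra versions are word-for-word the same, since the partition-of-unity and cut-off functions do not depend on the parameter in $[0,\infty)$ and so preserve both uniform continuity and the decay $\sup_n\text{prop}(f^{(n)}(t))\to0$.

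The genuinely delicate point --- and the only real obstacle --- is the uniformity in $n$ flagged above: one must choose all cut-off functions with a gradient bound, and with a bound on the number of elementary tensors needed to express them at the relevant $W_k$-level, independent of $n$. For all but finitely many $n$ this follows from uniform bounded geometry of the $\wgn$ together with the covering-faithfulness of the sequence $(\wgn)$, which makes $\Ga_n\cdot B_r(f_n(x))$ a genuine disjoint union of its $\Ga_n$-translates (so that the orbitwise constructions are unambiguous and uniformly controlled); the remaining finitely many $n$ contribute only finitely many, hence bounded, constants.
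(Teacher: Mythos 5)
Your proof is correct and takes essentially the same approach as the paper: the hard inclusion of the sum case is handled by multiplying matrix entries by central cut-off functions (your $\psi_j$ play the same role as the paper's normalized quotients $g_x/(h_x+h_x^N)$), with uniformity in $n$ secured by bounded geometry and covering faithfulness. The paper declares the intersection case "not difficult" and omits it, whereas you supply a short cut-off argument for it; this and your observation that $\umtwist_{{}_sO}$ and $\umtwist_{{}_sO^N}$ are ideals in $\umtwist_{{}_sO\cup{}_sO^N}$ are pleasant additions, but the underlying strategy is the same.
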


\begin{proof}
The intersection case is not difficult, so we focus on the sum case.  Moreover, the case of the localization algebras is similar, so we will only actually prove the case in the first line of the conclusion above.  Further, the inclusion
\begin{align*}
\lim_{s\to0}& \umtwist_{_sO\cup _sO^N} \\ &\supseteq\Big(\lim_{s\to0}\umtwist_{_sO}\Big)+\Big(\lim_{s\to0}\umtwist_{_sO^N}\Big)
\end{align*}
is clear, so it suffices to prove the converse inclusion, and moreover it suffices to prove this on the algebraic level, i.e.\ to show that
\begin{align*}
\lim_{s\to0}& \uatwist_{_sO\cup _sO^N} \\ &\subseteq\Big(\lim_{s\to0}\uatwist_{_sO}\Big)+\Big(\lim_{s\to0}\uatwist_{_sO^N}\Big);
\end{align*}
this is what we will actually prove.

Let then $\mathbf{T}=(T^{(0)},T^{(1)},...)$ be an element of $\uatwist_{_sO\cup _sO^N}$ for some fixed $s$.  For each $n,i,j$, let $c_{n,i}^j$ be such that
\begin{enumerate}
\item $c^j_{n,i}$ is a smooth map from $W_k(x^j_{n,i})\times\R_+$ to $[0,1]$;
\item $c^j_{n,i}$ is identically one on $_{\frac{s}{2}}U^j_{n,i}$;
\item $\text{supp}(c^j_{n,i})\subseteq _{\frac{s}{3}}U^j_{n,i}$.
\end{enumerate}
Note in particular that conditions 1 and 3 imply that each $c^j_{n,i}$ can be considered as an element of the center of $\mathcal{A}(W_k(x_{n,i}^j))$.
Define 
$$
g^j_{n,i}=\beta_k(x^j_{n,i})(c^j_{n,i})\in\mathcal{A}(\V_n\times\R_+),
$$
and for each $x\in P_R(\wgn)$, define 
$$
g_{n,x}=\sum_{j=1}^{N-1}\sum_{g\in\Ga_n}g\cdot\Big(\sum_{d(x,x^j_{n,i})\leq \text{prop}(T^{(n)})} g^j_{n,i}\Big)
$$
and 
$$
g_{n,x}^N=\sum_{g\in\Ga_n}g\cdot\Big(\sum_{d(x,x^j_{n,i})\leq \text{prop}(T^{(n)})} g^N_{n,i}\Big)
$$
(these make sense, are uniformly bounded, and have uniformly bounded derivatives using uniform bounded geometry of the $\wgn$). Define $h_{n,x}$ and $h_{n,x}^N$ precisely analogously, but starting with functions $d^j_{n,i}$ that satisfy the same conditions as the $c^j_{n,i}$ but with $s/2$ and $s/3$ in conditions 2 and 3 replaced by $s/3$ and $s/4$ respectively.  

Define finally sequences of operators $\mathbf{A}=(A^{(0)},A^{(1)},...)$ and $\mathbf{B}=(B^{(0)},B^{(1)},...)$ by
$$
A^{(n)}_{x,y}=\frac{g_x}{h_x+h_x^N}T^{(n)}_{x,y}~~ \text{ and } ~~B^{(n)}_{x,y}=\frac{g_x^N}{h_x+h^N_x}T^{(n)}_{x,y},
$$
and note that $\mathbf{T}=\mathbf{A}+\mathbf{B}$.  It is not difficult to use the comments above to check that 
$$
\mathbf{A}\in \uatwist_{_{(s/3)}O}~~ \text{ and } ~~\mathbf{B}\in \uatwist_{_{(s/3)}O^N}
$$
so we are done.
\end{proof}

There is a well-known Mayer-Vietoris sequence in $K$-theory associated to a pushout square: cf.\ for example \cite[Section 3]{Higson:1993th}.  Applying this to the previous lemma gives the following Mayer-Vietoris sequence.

\begin{mvcor}\label{mvcor}
With the set up as in the previous lemma, denote by
\begin{align*}
A & =\lim_{s\to0}\umtwist_{_sO\cup _sO^N} \\
B & =\lim_{s\to0}\umtwist_{_sO} \\
C & =\lim_{s\to0}\umtwist_{_sO^N} \\
D & =\lim_{s\to0}\umtwist_{_sO\cap _sO^N}.
\end{align*}
Then there exists a (six-term cyclic) Mayer-Vietoris sequence
$$
\cdots\to K_i(D)\to K_i(B)\oplus K_i(C)\to K_i(A)\to K_{i-1}(D)\to \cdots 
$$
and similarly for the localization algebras. \qed
\end{mvcor}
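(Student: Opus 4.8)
The plan is to obtain Corollary \ref{mvcor} as a purely formal consequence of Lemma \ref{insum}, by feeding the algebraic identities it supplies into the standard Mayer--Vietoris sequence attached to a pushout square of $C^*$-algebras (as in \cite[Section 3]{Higson:1993th}). There is really no new content, so I merely indicate the steps one would carry out.

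First I would record the ideal structure. For each fixed $s>0$ the algebra $\umtwist_{_sO}$ is, by Definitions \ref{supp} and \ref{restwist}, a closed two-sided ideal of $\umtwist$; being contained in $\umtwist_{_sO\cup _sO^N}$, it is therefore a closed ideal of the latter, and the same holds for $\umtwist_{_sO^N}$, with $\umtwist_{_sO}\cap\umtwist_{_sO^N}=\umtwist_{_sO\cap _sO^N}$. Passing to the direct limit over $s\to0$ preserves containments of closed ideals, so $B$ and $C$ are closed ideals of $A$ with $B\cap C=D$; Lemma \ref{insum} then supplies the single remaining relation $B+C=A$.

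Next I would invoke the pushout. Since $B+C=A$, the second isomorphism theorem yields a canonical isomorphism $C/(B\cap C)\cong(B+C)/B$, i.e.\ $C/D\cong A/B$, and the inclusions $D\hookrightarrow B$, $C\hookrightarrow A$ assemble into a morphism from the extension $0\to D\to C\to C/D\to 0$ to the extension $0\to B\to A\to A/B\to 0$ which is an isomorphism on the quotient terms. Comparing the two associated six-term exact sequences in $K$-theory and using naturality of the index and exponential maps then produces the relative Mayer--Vietoris sequence $\cdots\to K_i(D)\to K_i(B)\oplus K_i(C)\to K_i(A)\to K_{i-1}(D)\to\cdots$, which is precisely the assertion. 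The case of the localization algebras is identical, using the localization-algebra part of Lemma \ref{insum}.

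The only step deserving any attention --- the closest thing here to an obstacle --- is the first one: checking that taking the limit $s\to0$ leaves us within the category of $C^*$-algebras equipped with closed-ideal inclusions, so that the pushout machinery applies verbatim. This is immediate from the continuity of $C^*$-algebraic direct limits, and all the substantive work has already been done in Lemma \ref{insum}.
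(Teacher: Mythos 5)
Your proposal is correct and takes essentially the same route as the paper, which simply cites the well-known Mayer--Vietoris sequence attached to a pushout square of $C^*$-algebras and applies it to Lemma~\ref{insum}. One small inaccuracy worth flagging: you claim the intersection identity $B\cap C = D$ follows formally from passing to the direct limit over $s\to 0$, but in general a direct limit need not commute with intersections of ideals; this identity is in fact part of what Lemma~\ref{insum} asserts (the ``intersection case,'' which the paper notes is not difficult but does treat as part of that lemma), so you should cite the lemma for both relations rather than only for $B+C=A$.
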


One can now use a Mayer-Vietoris argument as in \cite[Theorem 6.8]{Yu:200ve} to complete the proof of Theorem \ref{maxthe}.

\begin{proof}[Proof of Theorem \ref{mvthe}]
For each $n\in\N$ and $r>0$, let $(\wgn)_r$ be as in line \eqref{gnr} above.  As one has that 
$$
\umtwist=\lim_{r\to\infty}\umtwist_{(\wgn)_r}
$$
and 
$$
\ulmtwist=\lim_{r\to\infty}\ulmtwist_{(\wgn)_r},
$$
it suffices to prove that the restricted evaluation map 
$$
e_*:K_*(\ulmtwist_{(\wgn)_r})\to K_*(\umtwist_{(\wgn)_r})
$$
is an isomorphism for all $r>0$, and indeed that the restricted evaluation map
$$
e_*:K_*(\lim_{s\to0}\ulmtwist_{(\wgn)_{r-s}})\to K_*( \lim_{s\to0}\umtwist_{(\wgn)_{r-s}})
$$
This, however, follows from Lemma \ref{basiclem}, Lemma \ref{splitlem}, Corollary \ref{mvcor} and induction, so we are done.
\end{proof}

\section{The Dirac-dual-Dirac method in infinite dimensions}\label{dddsec}

In this subsection we construct a commutative diagram
\begin{equation}\label{ddd}
\xymatrix{ \lim_{R}K_*(\prod^{U,L,max}C^*(P_R(\wgn))^{\Gamma_n}) \ar[r]^{e_*} \ar[d]^{\beta_*} & \lim_RK_*(\prod^{U,max}C^*(P_R(\wgn))^{\Gamma_n}) \ar[d]^{\beta_*} \\
\lim_RK_*(\ulmtwist) \ar[r]^{e_*} \ar[d]^{\alpha_*} & \lim_RK_*(\umtwist) \ar[d]^{\alpha_*} \\
\lim_RK_*(\umlk) \ar[d]^{\gamma_*^{-1}} \ar[r]^{e_*} & \lim_RK_*(\umk) \ar[d]^{\gamma_*^{-1}} \\
\lim_{R}K_*(\prod^{U,L,max}C^*(P_R(\wgn))^{\Gamma_n}) \ar[r]^{e_*} & \lim_RK_*(\prod^{U,max}C^*(P_R(\wgn))^{\Gamma_n}) 
}
\end{equation}
where the maps labeled $\alpha_*$ and $\beta_*$ are analogues of the \emph{Dirac} and \emph{Bott} (or \emph{dual Dirac} elements) used by Higson--Kasparov in \cite{Higson:2001eb}.   The maps labeled $\gamma_*^{-1}$ are such that $\gamma_*$ is the composition of $\alpha_*$ and $\beta_*$, and should be thought of as being the identity on $K$-theory (although their domains and ranges are in fact slightly different).

The main theorem of this section, implicit in the preceding discussion is as follows.

\begin{dddthe}\label{dddthe}
The vertical compositions in diagram (\ref{ddd}) above are isomorphisms (even before taking the limits as $R\to\infty$).
\end{dddthe}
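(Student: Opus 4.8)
# Proof Proposal for Theorem \ref{dddthe}

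\textbf{Overall approach.} The plan is to construct the vertical maps $\beta_*$, $\alpha_*$, and $\gamma_*^{-1}$ explicitly as asymptotic-morphism-level maps, then prove the composition $\gamma_* = \alpha_* \circ \beta_*$ is an isomorphism by exhibiting it as (homotopic to) a standard Bott-periodicity-type isomorphism. The key point, following Higson--Kasparov \cite{Higson:2001eb} and its coarse adaptation by the second author \cite{Yu:200ve}, is that $\beta_*$ is induced by the family of ``Bott'' maps $\beta(x)\colon \mathcal{S} \to \mathcal{A}(\V_n)$ from Definition \ref{dubyas}, assembled across all $x$, all $n$, and all $R$; while $\alpha_*$ is induced by a family of ``Dirac'' asymptotic morphisms $\mathcal{A}(\V_n) \dashrightarrow \mathcal{K}(\mathcal{L}^2_n)$ built, level by level on the finite-dimensional affine subspaces $W_k(x)$, from the Clifford-algebra Dirac operators of Higson--Kasparov--Trout \cite{Higson:1999be}. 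Because the group actions are proper and the coarse embeddings $f_n$ are uniform across $n$, all of these constructions can be made $\Ga_n$-equivariant and simultaneously controlled over the whole sequence, which is exactly what is needed to land in the uniform products. First I would verify that each of these maps is well-defined on the twisted (localization) algebras — in particular that propagation is preserved up to the controlled errors, so the localization-algebra versions make sense and the squares with $e_*$ commute — which is essentially bookkeeping given the conditions 2--7 in Definition \ref{twisted}.

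\textbf{Key steps, in order.} (1) Define $\beta$ at the chain level: send an operator $T$ in $\uatwist$ (resp. its localization version) with matrix entries in $\C[P_R(\wgn)]^{\Ga_n}\hat\otimes\mathcal{K}$ to the operator with entries $(\beta(x)\hat\otimes 1)(T_{x,y})$, and check this respects all structure, is $\Ga_n$-equivariant, and has the same propagation. (2) Define $\alpha$ at the level of asymptotic morphisms: using the Dirac operator on each $W_k(x)\times\R_+$ one gets an asymptotic morphism $\mathcal{A}(W_k(x)) \dashrightarrow \mathcal{K}$ compatible with the structure maps $\beta_{ba}$, hence (by taking direct limits) an asymptotic morphism $\mathcal{A}(\V_n)\dashrightarrow\mathcal{K}(\mathcal{L}^2_n)$; applying it entrywise and using conditions 6--7 to control the asymptotic behaviour uniformly in $n$ gives $\alpha_*$ into $K_*(\umk)$ (resp. the localization version). (3) Define $\gamma$ as the composite asymptotic morphism $\mathcal{S}\dashrightarrow\mathcal{K}(\mathcal{L}^2_n)$, applied entrywise; this carries $\prod^{U,max}C^*(P_R(\wgn))^{\Ga_n}$ to $\umk$, and one checks that on $K$-theory it is the canonical (Morita-type / stabilization) isomorphism, so it is invertible — this gives $\gamma_*^{-1}$, and $\gamma_* = \alpha_*\circ\beta_*$ by associativity of composition of asymptotic morphisms. (4) Conclude: the top two-thirds vertical composite is $\gamma_* $, an isomorphism; the bottom two-thirds is likewise $\gamma_*$ composed with $\gamma_*^{-1}$, hence the identity; therefore each full vertical composite is an isomorphism. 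All four squares commute with $e_*$ because every map is induced by an (entrywise-applied) asymptotic morphism that does not increase propagation.

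\textbf{Main obstacle.} The hard part will be Step (2)–(3): proving that $\gamma_* = \alpha_*\circ\beta_*$ induces an \emph{isomorphism} on $K$-theory, uniformly over the whole sequence $(\wgn)_n$ and over $R$. This is the infinite-dimensional Bott periodicity argument of Higson--Kasparov--Trout \cite{Higson:1999be}, \cite{Higson:2001eb}, and the content is that the composition $\mathcal{S}\xrightarrow{\beta(x)}\mathcal{A}(\V_n)\xrightarrow{\alpha}\mathcal{K}$ is asymptotically equal to a rank-one projection times a fixed $\ast$-homomorphism $\mathcal{S}\to\mathcal{K}$, \emph{with the asymptotic estimates uniform in $x$, $n$, and $R$}. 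Uniformity in $n$ is what makes this more delicate than the single-space case of \cite{Higson:2001eb}, but it is forced by the uniformity of the Julg--Valette embeddings $f_n$ (equation \eqref{distort}) and of the bounded geometry of the $G_n$; the slogan ``equivariance forces uniformity'' — already used in the proof of Lemma \ref{basiclem} — applies here as well. The remaining verifications (equivariance, propagation control, well-definedness on localization algebras, commutativity of the squares) are routine adaptations of the corresponding steps in \cite{Yu:200ve}, so I would state them and refer to that source rather than reproduce the estimates in full.
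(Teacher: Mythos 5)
Your overall plan — build $\alpha$, $\beta$, $\gamma$ as asymptotic morphisms, check they respect the uniform structure and the localization algebras, prove $\gamma_* = \alpha_* \circ \beta_*$, and then show $\gamma_*$ is an isomorphism — matches the architecture of the paper's proof. Your identification of which verifications are ``bookkeeping'' (equivariance, propagation control, commuting with $e_*$) and which are the real content is also accurate.

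However, there is a genuine gap in the one step you yourself flag as hard. You say the key content is that $\mathcal{S}\xrightarrow{\beta(x)}\ahn\xrightarrow{\alpha}\mathcal{K}(\mathcal{L}^2_n)$ is ``asymptotically a rank-one projection times a fixed $*$-homomorphism $\mathcal{S}\to\mathcal{K}$,'' and you treat the resulting $\gamma_*$ as an evident Morita/stabilization isomorphism. But the rank-one projection that appears is the projection onto $\ker B_{0,t}(x)$, which is spanned by a Gaussian centered at $f(x)$ — it genuinely depends on $x$, and $x$ ranges over the entire (noncompact) Rips complex. So $\gamma_t$ is not entrywise a single stabilizing corner embedding, and it is not clear that $\gamma_*$ is an isomorphism without further argument. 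This is precisely where the paper does the extra work: it interpolates the affine $\Ga_n$-action $\sigma^{(n)}(g)\colon v\mapsto\pi^{(n)}(g)v+b^{(n)}(g)$ through $\sigma_s^{(n)}(g)\colon v\mapsto\pi^{(n)}(g)v+sb^{(n)}(g)$, assembles the resulting algebras into a continuous field $\umk_{[0,1]}$, and observes that the evaluations $\epsilon_s$ all induce $K$-theory isomorphisms. At $s=0$ the action is linear, $B_{0,t}(sf(x))$ degenerates to the $x$-independent operator $B_{0,t}(0)$, and only then does $\gamma_{0,t}$ become homotopic to a fixed rank-one-projection map $T^{(n)}_{x,y}\mapsto T^{(n)}_{x,y}\hat\otimes g(0)P$. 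Without some version of this deformation — or some other device eliminating the $x$-dependence of the kernel projection — the claim that $\gamma_*$ is invertible remains unjustified, and Theorem \ref{dddthe} does not follow.

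Two smaller points. First, you propose defining $\gamma$ as the literal composite $\alpha\circ\beta$; the paper instead defines $\gamma$ independently via $B_{0,t}(x)$ and then proves $\alpha_*\circ\beta_*=\gamma_*$ as a separate lemma (via \cite[Proposition 4.2]{Higson:1999be}). Either presentation can be made to work, but defining $\gamma$ by its own formula is what makes the continuous-field homotopy above easy to write down, since one can simply replace $B_{0,t}(x)$ by $B_{0,t}(sf(x))$. Second, the isomorphism $\gamma_*$ is an equivariant, proper-action phenomenon — it is the analogue of the $\gamma$-element argument in Baum--Connes for a-T-menable groups — rather than a consequence of Bott periodicity alone, so the citation should really be to that strand of the argument (cf.\ the proof of \cite[Theorem 3.11]{Higson:2004la}) rather than to Higson--Kasparov--Trout in isolation.
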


Theorem \ref{uniiso}, whence Theorem \ref{maxthe}, follows immediately from this, Theorem \ref{mvthe}, Theorem \ref{locthe}, and a diagram chase.  

The proof of Theorem \ref{dddthe} is based on \cite[Section 7]{Yu:200ve}, using properness of the $\Ga_n$-actions to ensure that everything works equivariantly, and the uniformity of the coarse embeddings $f_n:\wgn\to\V_n$ to ensure that the Dirac and Bott morphisms considered there only alter the propagations of operators by a uniform amount over the entire sequence $(\wgn)_{n\in\N}$.

The maps in diagram \ref{ddd} above will all be constructed as \emph{asymptotic morphisms} \cite{Guentner:2000fj}.  The reader is referred to the memoir \cite{Guentner:2000fj} and \cite{Higson:1999be,Yu:200ve,Higson:2001eb,Higson:2004la} for background, and the sources of most of the ideas behind the current section.

\subsection*{The Dirac map $\alpha$}

We begin with the definition of $\alpha$.  Working in generality for the moment, let $\V$ be a separable infinite dimensional Hilbert space.  Let $V\subseteq\V$ denote a finite dimensional affine subspace of $\V$ and $V^0$ the corresponding finite dimensional linear subspace of differences of elements from $V$.  Let $\mathcal{L}^2(V):=L^2(V,\Cliff_\C(V^0))$ denote the graded Hilbert space of $L^2$-maps from $V$ to the complex Clifford algebra of $V^0$, $\Cliff_\C(V^0)$  (here we use the inner product on $\V$ to define a `Lebesgue measure' on $V$ and an inner product on $\Cliff_\C(V^0)$ in order to make sense of this).   

Say $V_a\subseteq V_b$ are finite dimensional affine subspaces of $\V$.  Define
$$
V_{ba}^0=V^0_b\ominus V_a^0
$$
to be the orthogonal complement of $V_a$ in $V_b$, which is a linear subspace of $\V$. Define $\xi_0\in \mathcal{L}^2(V_{ba}^0)$ by
\begin{equation}\label{xi0}
\xi_0(w)=\pi^{-\text{dim}(V_{ba})/4}\exp\Big(-\frac{1}{2}\|w\|^2\Big),
\end{equation}
and an isometric inclusion
\begin{align*}
v_{ba}:\mathcal{L}^2(V_a) \to \mathcal{L}^2(V_{ba}^0) \hat{\otimes}\mathcal{L}^2(V_a)\cong \mathcal{L}^2(V_b)
\end{align*}
by 
\begin{equation}\label{isoinc}
v_{ba}:\xi \mapsto \xi_0\hat{\otimes}\xi;
\end{equation}
it is not difficult to check that these isometries are compatible in the sense that $v_{cb}\circ v_{ba}=v_{ca}$ whenever the composition makes sense, whence they turn the collection 
$$
\{\mathcal{L}^2(V)~|~V\subseteq \V\text{ a finite dimensional affine subspace}\}
$$
into a directed system.  Define 
\begin{equation}\label{hil}
\mathcal{L}^2(\V):=\lim_\to \mathcal{L}^2(V),
\end{equation}
where the limit is taken over the directed system of all affine subspaces of $\V$ as above.  

Let $\sw(V)\subseteq \mathcal{L}^2(V)$ be the (dense) subspace of Schwartz class functions from $V$ to $\Cliff_\C(V^0)$.  Choose an orthonormal basis $\{v_1,...,v_n\}$ for $V^0$; using a fixed choice of basepoint $v\in V$, these define coordinates $x_i$ on $V$ by the `duality relationships'
$$
x_i: v+v_j\mapsto \delta_{ij},
$$
where $\delta_{ij}$ is the Kronecker delta.
Define the \emph{Dirac operator}, an unbounded differential operator on $\mathcal{L}^2(V)$ with domain $\sw(V)$, by the formula
$$
D_{V}=\sum_{i=1}^n \frac{\partial}{\partial x_i} v_i,
$$
where $v_i$ is thought of as acting by Clifford multiplication; $D_{V}$ does not depend on the choice of orthonormal basis or on the basepoint $v$.  Define moreover the \emph{Clifford operator}, also an unbounded operator on $\mathcal{L}^2(V)$ with domain $\sw(V)$, by the formula
$$
(C_{V,v}\xi)(w)=(w-v)\cdot \xi(w),
$$ 
where $v\in V$ is again a fixed basepoint, and where the multiplication is to be thought of as Clifford multiplication by the vector $w-v\in V^0$; note, of course that $C_{V,v}$ does depend on the choice of basepoint.

We now specialize back to the case of interest.  Fix $n\in\N$ and denote by $\mathcal{L}^2_n$ the Hilbert space $\mathcal{L}^2(\V_n)$ constructed in line \eqref{hil} above.   Note that $\Ga_n$ has a unitary action on $\mathcal{L}^2_n$ coming from the affine isometric action of $\Ga_n$ on $\V_n$.    Fix now $x\in P_R(\wgn)$, let $W_k(x)$ be as in Definition \ref{dubyas} above, and denote by 
$$
v_{k}:\mathcal{L}^2(W_k(x))\to \mathcal{L}^2(W_{k+1}(x))
$$
the isometric inclusion from line \eqref{isoinc} above.  Note that these inclusions preserve the Schwartz subspaces $\sw(W_k(x))$, and define a Schwartz subspace of $\mathcal{L}^2_n$ by taking the algebraic direct limit
\begin{equation}\label{bigs}
\sw(x)=\lim_{k\to\infty}\sw(W_k(x));
\end{equation}
this vector space depends on the choice of $x$, but will always be a dense subspace of $\mathcal{L}^2_n$.  Denote by $\mathcal{K}(\mathcal{L}^2_n)$ the graded $C^*$-algebra of compact operators on $\mathcal{L}^2_n$, and define new $C^*$-algebras
$$
\umk~~\text{ and } ~~\umlk
$$
precisely analogously to the $C^*$-algebras
$$
\prod^{U,max}C^*(P_R(\wgn)))^{\Ga_n} ~~\text{ and } \prod^{U,L,max}C^*(P_R(\wgn)))^{\Ga_n}
$$
from Definitions \ref{uniprod} and \ref{locdef} above, only now with $\mathcal{K}(\Hi)\hat{\otimes}\mathcal{K}(\mathcal{L}^2_n)$ used instead of $\mathcal{K}(\Hi)$ for matrix entries; of course, the only real difference is the non-triviality of the grading (this is not so important), and the non-triviality of the $\Ga_n$ action on the former algebras of compact operators.  

For each $k\in\N$ such that $k\geq 1$, define 
$$
V_k(x):=W_{k+1}(x)^0\ominus W_{k}(x)^0
$$
to be the orthogonal complement of $W_k(x)$ in $W_{k+1}(x)$ and define $V_0(x)=W_1(x)$.  We may then consider the Dirac operators, denoted
$$D_{k}:=D_{V_0(x)},$$
and Clifford operators, denoted
$$C_{k,x}:=\left\{\begin{array}{ll} C_{V_0(x),f(x)} & k=0 \\ C_{V_k(x),0} & k\geq 1\end{array}\right.,$$ 
associated to each $V_k(x)$ in the manner above.  Define for each $m\in\N$ and each $t\geq 1$ an operator $B_{m,t}(x)$ by 
$$
B_{m,t}(x)=\sum_{k=0}^{m-1} (1+kt^{-1})D_{k}+\sum_{k=m}^\infty (1+kt^{-1})(D_{k}+C_{k,x});
$$
note that as the function $\xi_0\in \mathcal{L}^2(V_k(x))$ from line \eqref{xi0} above is in the kernel of $D_{k}+C_{k,x}$ for all $k$, the operator $B_{m,t}(x)$ is well-defined on the Schwartz space $\sw(x)$ as in line \eqref{bigs} above; we take this for its domain.  Note that the collection of operators $\{B_{m,t}(x)\}_{x\in P_R(\wgn)}$ is `equivariant' in the sense that any $g\in\Ga_n$ maps the domain $\sw(x)$ of $B_{m,t}(x)$ to the domain $\sw(gx)$ of $B_{m,t}(gx)$, and conjugates the former operator to the latter operator.  

Now, for each $x\in P_R(\wgn)$ and each $k\in\N$, the algebra $\mathcal{C}(W_k(x))$ from Definition \ref{c*h} is represented on $\mathcal{L}^2_n$ via its natural representation on $\mathcal{L}^2(W_k(x))$ and the isometric inclusion of this Hilbert space into $\mathcal{L}^2_n$.  If $h$ is an element of $\mathcal{C}(W_k(x))$ and $V\supseteq W_k(x)$ is a finite dimensional affine subspace of $\V_n$, define an operator $\widetilde{h}$ acting on $\mathcal{L}^2(V)$ by 
$$
(\widetilde{h}\cdot \xi)(w+v)=h(w)\cdot \xi(w+v),
$$
where $\xi$ is an element of $\mathcal{L}^2(V)$, and $w+v$ is an element of $V$ written in such a way that $w\in W_k(x)$ and $v$ is in the orthogonal complement of $W_k(x)^0$ in $V^0$ (of course, a unique decomposition of this form exists for any element of $V$). Define moreover for each $h\in \mathcal{C}(W_k(x))$, $g\in\mathcal{S}$ and $t\in [1,\infty)$ elements $h_t$ and $g_t$ via the formulas
$$
g_t(s)=g(t^{-1}s)~~\text{ and } h_t(v)=h(f(x)+t^{-1}(v-f(x)))
$$
respectively.  Define for each $m\in\N$ and $t\in[1,\infty)$ a map $\theta^m_t(x)$ from the collection of finite linear combinations of elementary tensors in 
$$\mathcal{A}(W_m(x))\hat{\otimes}\mathcal{K}(\Hi)\cong \mathcal{S}\hat{\otimes}\mathcal{C}(W_m(x))\hat{\otimes}\mathcal{K}(\Hi)$$
to $\mathcal{K}(\Hi)\hat{\otimes}\mathcal{K}(\mathcal{L}^2_n)$
by the formula
\begin{equation}\label{theta}
\theta_t^m:(g\hat{\otimes}h)\hat{\otimes}k\mapsto g_t(B_{m,t}(x)|_{W_m(x)})\widetilde{h_t}\hat{\otimes}k
\end{equation}
on elementary tensors of $g\in\mathcal{S}$, $h\in \mathcal{C}(W_m(x))$, and $k\in\mathcal{K}(\Hi)$, and extending by linearity.

Define moreover for each $t\in[1,\infty)$ a map 
$$\alpha_t:\uatwist\to \umk$$ 
by the formula
$$
(\alpha_t(\mathbf{T}))^{(n)}_{x,y}:=\theta^m_t(x)(T'_{x,y})
$$
for each $n\in\N$ and $x,y\in Z_R\subseteq P_R(\wgn)$, where $m$ and $T'_{x,y}\in\mathcal{A}(W_m(x))\hat{\otimes}\mathcal{K}(\Hi)$ are such that  
$$
\beta_m(x)(T'_{x,y})=T^{(n)}_{x,y}
$$
as in part 6 of Definition \ref{twisted} (we will show in the proof of Lemma \ref{alphalem} below that the choice of $m$ does not matter).  Application of $\alpha_t$ as above pointwise, i.e.\ using the formula
$$(\alpha_t\mathbf{f})(s):=\alpha_t(\mathbf{f}(s))$$
for $s\in [0,\infty)$ similarly defines maps
$$
\alpha_t : \ualtwist\to \umlk.
$$  

The proof of the following lemma is analogous to that of \cite[Lemma 7.2]{Yu:200ve}.

\begin{alphalem}\label{alphalem}
The maps $\alpha_t$ above extend to asymptotic morphisms on the $C^*$-algebraic completions
$$
\alpha_t:\umtwist\to \umk
$$
and 
$$
\alpha_t:\ulmtwist\to \umlk. \eqno 
$$
\end{alphalem}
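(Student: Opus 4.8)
The plan is to follow the scheme of \cite[Section 7]{Yu:200ve} (which rests on \cite{Higson:1999be,Higson:2001eb}), carrying along the bookkeeping needed to keep every estimate uniform over $n$. Two things must be proved: first, that for each fixed $\mathbf{T}$ the assignment $t\mapsto\alpha_t(\mathbf{T})$ is norm-continuous and uniformly bounded on $[1,\infty)$, and that the family $(\alpha_t)$ is asymptotically linear, multiplicative and $*$-preserving on the dense $*$-subalgebras $\uatwist$ and $\ualtwist$; second, that $(\alpha_t)$ is asymptotically contractive for the maximal norms, so that it extends by continuity to the completions. Granting the first point, the second is essentially formal: the function $\mathbf{T}\mapsto\limsup_{t\to\infty}\|\alpha_t(\mathbf{T})\|_{max}$ is then a $C^*$-seminorm on $\uatwist$ --- subadditivity, homogeneity, $*$-invariance and the $C^*$-identity all follow from the asymptotic $*$-homomorphism property together with the $C^*$-identity in $\umk$ --- and hence is dominated by the maximal norm on $\umtwist$ by the very definition of the latter; a standard density and reparametrization argument then upgrades $(\alpha_t)$ to an honest asymptotic morphism $\umtwist\to\umk$, and likewise $\ulmtwist\to\umlk$.

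The analytic core is thus the asymptotic $*$-homomorphism property at the algebraic level, which is where the Bott--Dirac operators $B_{m,t}(x)$ enter. Writing $\mathbf{T}\in\uatwist$ via conditions 6 and 7 of Definition \ref{twisted} as $T^{(n)}_{x,y}=\beta_m(x)(T'_{x,y})$, with $T'_{x,y}$ a sum of at most $K$ elementary tensors in $\mathcal{A}(W_m(x))\hat{\otimes}\mathcal{K}(\Hi)$ of uniformly bounded first derivatives (the constants $K$ and $c$, like all constants in Definition \ref{twisted}, being uniform in $n$ by the definition of $\uatwist$), the estimates one needs --- proved exactly as in \cite{Higson:1999be} and \cite[Section 7]{Yu:200ve} --- are: (i) the maps $\theta^m_t(x)$ for varying $m$ agree asymptotically as $t\to\infty$, so that the value of $\alpha_t(\mathbf{T})$ does not asymptotically depend on the choice of $m$ (this uses that $\xi_0\in\mathcal{L}^2(V_k(x))$ is annihilated by $D_k+C_{k,x}$, exactly as claimed in the definition of $\alpha_t$); (ii) each $\theta^m_t(x)$ is asymptotically multiplicative and $*$-preserving on finite sums of elementary tensors; and (iii), the key point for multiplicativity of $\alpha_t$ itself, whenever $x,y\in P_R(\wgn)$ with $d(x,y)\leq r_1$ the maps $\theta^m_t(x)$ and $\theta^m_t(y)$, after passing to a common finite dimensional affine subspace, agree asymptotically, so that $\sum_y\theta^m_t(x)(S'_{x,y})\theta^m_t(x)(T'_{y,z})$ and $\sum_y\theta^m_t(x)(S'_{x,y})\theta^m_t(y)(T'_{y,z})$ differ by $o(1)$; combined with (ii) this gives $\alpha_t(\mathbf{S}\mathbf{T})-\alpha_t(\mathbf{S})\alpha_t(\mathbf{T})\to0$. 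Every one of these estimates is driven by the rescaling $t^{-1}$ built into $g_t$, $h_t$ and $B_{m,t}(x)$, which shrinks the relevant geometric discrepancies --- differences of basepoints, of the subspaces $W_k$, of the Clifford translations $C_{ba}$ --- to zero at a rate controlled only by the constants $r_1,r_2,k,K,c$ of Definition \ref{twisted}, by the uniform distortion functions $\rho_\pm$ of the embeddings $f_n:\wgn\to\V_n$, and by uniform bounded geometry of the $\wgn$. As all of these data are uniform in $n$, the error terms are $o(1)$ uniformly in $n$ --- precisely what is needed to conclude the asymptotic $*$-homomorphism property for the \emph{uniform} products. Norm-continuity and uniform boundedness of $t\mapsto\alpha_t(\mathbf{T})$ are comparatively soft: the former follows from continuity in $t$ of the functional calculus $g_t$ applied to $B_{m,t}(x)$ (using $g\in\sw=C_0(\R)$), the latter from $\|g_t(\cdot)\|\leq\|g\|_\infty$ together with the bound $K$ on the number of elementary tensors.

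For the localization statement the one extra observation is that $\alpha_t$ acts entrywise in the index variables: the $(x,y)$-entry of $\alpha_t(\mathbf{T})^{(n)}$ depends only on the $(x,y)$-entry of $\mathbf{T}^{(n)}$, so $\alpha_t$ preserves propagation exactly. Hence $\alpha_t$ carries $\ualtwist$ into $\ualk$, the condition $\sup_n\mathrm{prop}(f^{(n)}(t))\to0$ is automatically inherited, and the same $\limsup$ argument, now with the localization norm, extends $(\alpha_t)$ to $\ulmtwist\to\umlk$; by construction $\alpha_t$ commutes with the evaluation-at-zero maps, so the upper two squares of diagram \eqref{ddd} will commute. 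Finally $\alpha_t$ respects the $\Z/2$-gradings ($B_{m,t}(x)$ is odd and $g\in\sw$ is graded), so everything stays in the graded category demanded by the graded $K$-theory of Section \ref{twistsec}. The only genuinely hard ingredient is (iii), the asymptotic agreement of the Bott--Dirac functional calculi at different but nearby basepoints, together with the demand that the error bounds be uniform in $n$: the former is the technical heart inherited from \cite[Section 7]{Yu:200ve}, and the uniformity in $n$ --- the one new feature --- is exactly what uniform bounded geometry and the Julg--Valette identity \eqref{distort} are there to supply.
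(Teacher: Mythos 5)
Your proposal is correct and follows essentially the same route as the paper: the same reduction to the three asymptotic estimates (independence of $m$, multiplicativity and $*$-preservation of $\theta^m_t(x)$, and asymptotic agreement of $\theta^m_t(x)$ and $\theta^m_t(y)$ for nearby basepoints), all inherited from \cite[Section 7]{Yu:200ve} with uniformity in $n$ supplied by uniform bounded geometry and the Julg--Valette identity, followed by extension to the completion via the universal property of the maximal norm. The only point the paper makes explicit that you leave implicit is why the entries $g_t(B_{m,t}(x)|_{W_m(x)})\widetilde{h_t}$ are actually compact (hence the image lies in $\umk$), which the paper attributes to ellipticity and the Rellich lemma via \cite{Higson:1999be}.
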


\begin{proof}
We will only consider the case of 
$$
\alpha_t:\umtwist\to \umk;
$$
the case of the localization algebras is similar.  Note that it follows from the remarks we have already made on equivariance and the local compactness of the operators $g(B_{m,t}(x)|_{W_m(x)})$ (which in turn follows from ellipticity and the Rellich lemma -- see for example \cite[page 8]{Higson:1999be}) that the image of $\alpha_t$ really is in $\umk$.

Fix $m\in\N$ and $R>0$, and $K,r,c>0$.  Let $\epsilon>0$.  Let $x$ be any element of any $P_R(\wgn)$.  Denote by 
$$
\mathcal{A}(W_m(x))\hat{\otimes}\mathcal{K}(\Hi)_{K,r,c}
$$
the subset of $\mathcal{A}(W_m(x))\hat{\otimes}\mathcal{K}(\Hi)$ consisting of all elements of the form
$$
\sum_{i=1}^Kg_i\hat{\otimes}h_i\hat{\otimes}k_i
$$
where $g_i\in\mathcal{S}$, $h_i\in \mathcal{C}(W_m(x))$, $k_i\in\mathcal{K}(\Hi)$ and such that
\begin{itemize}
\item each $g_i$ is supported in $[-r,r]$;
\item each $g_i$ and $h_i$ are continuously differentiable and satisfy $\|g_i'\|\leq c$, $\|\nabla_wh_i\|\leq c$ for all $w\in W_k(x)$ such that $\|w-f(x)\|\leq 1$
\end{itemize}
(in other words, the set of elements satisfying most of the conditions on matrix entries from Definition \ref{twisted} uniformly).  It follows from \cite[Lemma 7.5]{Yu:200ve} (which in turn uses \cite[Lemma 2.9]{Higson:1999be}) and the uniformly bounded geometry of the sequence $(\wgn)_{n\in\N}$ that there exists $t_0>0$ so that for all $t>t_0$ and all $a,b\in \mathcal{A}(W_m(x))\hat{\otimes}\mathcal{K}(\Hi)_{K,r,c}$,
$$
\|\theta^m_t(ab)-\theta^m_t(a)\theta^m_t(b)\|,~~\|\theta^m_t(a^*)-\theta^m_t(a)^*\|<\epsilon.
$$

Now, consider $\mathcal{A}(W_m(x))\hat{\otimes}\mathcal{K}(\Hi)$ and $\mathcal{A}(W_m(y))\hat{\otimes}\mathcal{K}(\Hi)$ as subalgebras of $\ahn\hat{\otimes}\mathcal{K}(\Hi)$, and use this to make sense of their intersection.  From the above, it follows that to show that 
$$
\alpha_t : \uatwist\to \umk
$$
is an asymptotic morphism, equivalently, defines a $*$-homomorphism into the asymptotic algebra 
\begin{equation}\label{aa}
\mathfrak{A}(\umk):=\frac{C_b([1,\infty),\umk)}{C_0([1,\infty),\umk)}
\end{equation}
(see \cite[Definition 1.1]{Guentner:2000fj}), it suffices to show that for any $r_1>0$ and $m,K,r,c>0$ as above and $\epsilon>0$ there exists $t_0>0$ such that if $x,y\in P_R(\wgn)$ satisfy $d(x,y)\leq r_1$ then for any
$$a\in \mathcal{A}(W_m(x))\hat{\otimes}\mathcal{K}(\Hi)_{K,r,c}\cap \mathcal{A}(W_m(y))\hat{\otimes}\mathcal{K}(\Hi)_{K,r,c}$$
and all $t>t_0$,
$$
\|\theta^m_t(x)(a)-\theta^m_t(y)(a)\|<\epsilon
$$
(this uses the uniform bounded geometry property of $(G_n)_{n\in\N}$ again, and from here the fact that there are only uniformly finitely many non-zero entries in any row or column of the sort of finite propagation matrices that we are dealing with).
This, however, follows from \cite[Lemmas 7.3 and 7.4, and proof of Lemma 7.2]{Yu:200ve}.

Finally, we must show that $\alpha_t$ extends from a $*$-homomorphism
$$
\alpha_t : \uatwist\to \mathfrak{A}(\umk)
$$
to the $C^*$-algebraic closure $\umtwist$ of the left hand side; this is immediate from the universal property of the maximal norm, however, so we are done.
\end{proof}

We denote by 
$$
\alpha_*: K_*(\umtwist)\to K_*(\umk)
$$
and 
$$
\alpha_*:K_*(\ulmtwist)\to K_*(\umlk)
$$
the maps induced by these asymptotic morphisms on $K$-theory.

\subsection*{The Bott map $\beta$}

For each $t\in[1,\infty)$, define a map
$$
\beta_t :\mathcal{S}\hat{\odot}\prod^U\C[\wgn]^{\Ga_n}\to \umtwist
$$
by the formula
$$
(\beta_t (g\hat{\otimes}\mathbf{T}))^{(n)}_{x,y}=\beta(x)(g_t)\hat{\otimes} T^{(n)}_{x,y},
$$
where 
$$\beta(x):\mathcal{S}\cong\mathcal{A}(\{f(x)\})\to \mathcal{A}(\V_n)$$
is the $*$-homomorphism from Definition \ref{dubyas} above.  Applying the above maps pointwise similarly defines a family of maps
$$
\beta_t:\mathcal{S}\hat{\odot}\prod^{U,L}\C[\wgn]^{\Ga_n}\to \ulmtwist.
$$

The proof of the following lemma is similar to that of \cite[Lemma 7.6]{Yu:200ve}.

\begin{betalem}\label{betalem}
The maps $\beta_t$ defined above extend to asymptotic morphisms
$$
\beta_t:\mathcal{S}\hat{\otimes}\prod^{U,max}C^*(\wgn)^{\Ga_n}\to \umtwist
$$
and 
$$
\beta_t:\mathcal{S}\hat{\otimes}\prod^{U,L,max}C^*(\wgn)^{\Ga_n}\to \ulmtwist.
$$
\end{betalem}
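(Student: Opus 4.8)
The plan is to transcribe the proof of \cite[Lemma 7.6]{Yu:200ve}, using properness of the $\Ga_n$-actions to make everything equivariant and the uniform distortion estimate \eqref{distort} (together with uniform bounded geometry of $(G_n)$) to keep all constants independent of $n$; the argument runs closely parallel to that of Lemma \ref{alphalem}. First I would pass to the dense $*$-subalgebra of $\mathcal{S}\hat{\odot}\prod^U\C[\wgn]^{\Ga_n}$ spanned by elementary tensors $g\hat{\otimes}\mathbf{T}$ with $g$ in the Schwartz class (so that $g$ is $C^1$ and the Fourier-analytic estimates below apply), and check that for each fixed $t\in[1,\infty)$ the sequence $\beta_t(g\hat{\otimes}\mathbf{T})$ genuinely lies in $\uatwist$. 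Equivariance is inherited from $\mathbf{T}$ and from equivariance of $x\mapsto\beta(x)$; conditions 2--4 of Definition \ref{twisted} pass over verbatim since $\beta_t$ merely tensors each matrix entry $T^{(n)}_{x,y}\in\mathcal{K}$ with the bounded element $\beta(x)(g_t)\in\ahn$, changing neither the pattern of nonzero entries nor the propagation and keeping $\|\beta(x)(g_t)\hat{\otimes}T^{(n)}_{x,y}\|\le\|g\|\,\|T^{(n)}_{x,y}\|$; condition 5 holds with a ($t$-dependent) radius from the description of the center of $\ahn$ as $C_0(\V_n\times\R_+)$ and the functional-calculus formula for $\beta(x)$ in Definition \ref{dubyas}; and conditions 6--7 follow by using compatibility of the connecting maps of the directed system defining $\ahn$ to realize $\beta(x)(g_t)\hat{\otimes}T^{(n)}_{x,y}$ as $(\beta_k(x)\hat{\otimes}1)$ applied to an element of $\mathcal{A}(W_k(x))\hat{\otimes}\mathcal{K}$ that is a product of (the image of) $g_t$ with $T^{(n)}_{x,y}$, whose number of elementary tensors and directional derivatives are controlled uniformly because $g$ is Schwartz and $(\wgn)$ has uniform bounded geometry.

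Next I would establish the two asymptotic-morphism conditions on this dense subalgebra. Writing $a=g\hat{\otimes}\mathbf{S}$, $b=h\hat{\otimes}\mathbf{T}$, the $(x,z)$ matrix entry of $\beta_t(a)\beta_t(b)$ is $\sum_y\big(\beta(x)(g_t)\beta(y)(h_t)\big)\hat{\otimes}\big(S^{(n)}_{x,y}T^{(n)}_{y,z}\big)$, while that of $\beta_t(ab)=\beta_t\big((gh)\hat{\otimes}(\mathbf{S}\mathbf{T})\big)$ is $\sum_y\big(\beta(x)((gh)_t)\big)\hat{\otimes}\big(S^{(n)}_{x,y}T^{(n)}_{y,z}\big)$; since $\beta(x)$ is a $*$-homomorphism and $(gh)_t=g_th_t$, we have $\beta(x)((gh)_t)=\beta(x)(g_t)\beta(x)(h_t)$, so the two entries differ by $\sum_y\beta(x)(g_t)\big(\beta(y)(h_t)-\beta(x)(h_t)\big)\hat{\otimes}\big(S^{(n)}_{x,y}T^{(n)}_{y,z}\big)$. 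Likewise, the failure of $\beta_t$ to be $*$-preserving is governed by the terms $\beta(x)(g_t)^*-\beta(y)(g_t)^*$. In both cases the offending quantity is bounded by $\|g\|$ (or $\|h\|$) times $\sup\{\|\beta(x)(\cdot)-\beta(y)(\cdot)\| : d(x,y)\le r_1\}$, where $r_1$ is the (uniform) propagation of the operator involved, and the number of nonzero terms in the $y$-sum is uniformly bounded by condition 2 of Definition \ref{twisted} and uniform bounded geometry.

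Everything therefore reduces to the single quantitative estimate $\|\beta(x)(g_t)-\beta(y)(g_t)\|\to0$ as $t\to\infty$, uniformly over $n$ and over $x,y\in P_R(\wgn)$ with $d(x,y)$ bounded, and this is the step I expect to be the main obstacle. It is the equivariant, uniform-in-$n$ analogue of \cite[Lemma 7.6]{Yu:200ve}: writing $\beta(x)(g_t)=g_t(D_x)$ with $D_x$ the odd (Dirac--Clifford) unbounded multiplier of $\ahn$ attached to the basepoint $f(x)$, one has that $D_x-D_y$ is Clifford multiplication by the constant vector $f(y)-f(x)$, a bounded operator, so for $g$ with compactly supported Fourier transform a standard commutator estimate gives $\|g_t(D_x)-g_t(D_y)\|\le C(g)\,t^{-1}\,\|f(x)-f(y)\|$, and a density argument handles general Schwartz $g$. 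The uniformity over $n$ is exactly where one uses \eqref{distort} and uniform bounded geometry: boundedness of $d(x,y)$ forces boundedness of $\|f_n(x)-f_n(y)\|$ with a constant independent of $n$, while $C(g)$ depends only on $g$. Granting this, the two asymptotic-morphism conditions hold and $\beta_t$ defines a $*$-homomorphism from the dense subalgebra into the asymptotic algebra $\mathfrak{A}(\umtwist)=C_b([1,\infty),\umtwist)/C_0([1,\infty),\umtwist)$; I would then extend to the maximal completion $\mathcal{S}\hat{\otimes}\prod^{U,max}C^*(\wgn)^{\Ga_n}$ via the universal property of the maximal norm, exactly as at the end of the proof of Lemma \ref{alphalem}, and obtain the localization-algebra statement by running all of the above pointwise in the localization parameter $s\in[0,\infty)$, using that the construction never increases propagation.
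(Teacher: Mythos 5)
Your proposal is correct and takes essentially the same route as the paper: verify the image lands in the twisted algebra, reduce the asymptotic-morphism conditions to the uniform estimate $\|\beta(x)(g_t)-\beta(y)(g_t)\|\to0$ (which the paper obtains by citing \cite[Lemma 3.2]{Higson:2004la} and \cite[Lemma 7.3]{Yu:200ve}, and which you usefully unpack via the Clifford-multiplier-difference/commutator argument), then extend to the $C^*$-completion. One small imprecision: the final extension is not quite ``exactly as at the end of the proof of Lemma~\ref{alphalem}'' --- because the domain here is a tensor product $\mathcal{S}\hat{\otimes}\prod^{U,max}C^*(\wgn)^{\Ga_n}$ with $\hat{\otimes}$ the spatial tensor product, one also needs the universal property of the maximal tensor product and nuclearity of $\mathcal{S}$ (so that maximal and spatial tensor products coincide), as the paper notes explicitly.
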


\begin{proof}
Again, we only consider the case of 
$$
\beta_t:\mathcal{S}\hat{\otimes}\prod^{U,max}C^*(\wgn)^{\Ga_n}\to \umtwist;
$$
the case of localization algebras is similar.  Note first that each of the individual $\beta^{(n)}_t$s has image in the $\Ga_n$-invariant part of 
$$C^*_{max}(\wgn;\ahn)\subseteq \umtwist,$$
whence the image of each $\beta_t$ is indeed in $\umtwist$.

The fact that $\beta_t$ defines a $*$-homomorphism into the asymptotic algebra 
$$
\mathfrak{A}(\umtwist):=\frac{C_b([1,\infty),\umtwist)}{C_0([1,\infty),\umtwist)}
$$
(cf.\ line \eqref{aa} above) follows from the argument of \cite[Lemma 3.2]{Higson:2004la} combined with that of \cite[Lemma 7.3]{Yu:200ve} to show that for all $R,r,c,\epsilon>0$ there exists $t_0$ such that for all $n$ and all $x,y\in P_R(\wgn)$ such that $d(x,y)\leq r$, all $t>t_0$ and all $g\in \mathcal{S}$ such that $\text{supp(g)}\subseteq [-r,r]$ and $\|g'(s)\|\leq c$ then
$$
\|\beta(x)(g_t)-\beta(y)(g_t)\|<\epsilon.
$$
The fact that $\beta_t$ extends to a $*$-homomorphism
$$
\beta_t:\mathcal{S}\hat{\otimes}\prod^{U,max}C^*(\wgn)^{\Ga_n}\to \mathfrak{A}(\umtwist)
$$ 
now follows from the universal property of the norm on $\prod^{U,max}C^*(\wgn)^{\Ga_n}$, the universal property of the maximal tensor product, and nuclearity of $\mathcal{S}$ so that the maximal tensor product agrees with the spatial tensor product.
\end{proof}

We denote by 
$$
\beta_*:K_*(\prod^{U,max}C^*(\wgn)^{\Ga_n})\to K_*(\umtwist)
$$
and 
$$
\beta_*:K_*(\prod^{U,L,max}C^*(\wgn)^{\Ga_n})\to K_*( \ulmtwist)
$$
the corresponding homomorphisms induced on $K$-theory.

\subsection*{The Gamma map $\gamma$}

We now define the last of our asymptotic morphisms $\gamma$.  It is a very close analogue of the usual `$\gamma$-element' appearing in $KK$- or $E$-theoretic proofs of the Baum-Connes conjecture for groups admitting a proper affine isometric action on a Hilbert space.

Here for each $t\in [1,\infty)$ we define 
$$
\gamma_t:\mathcal{S}\hat{\odot}\prod^U\C[\wgn]^{\Ga_n}\to \umk
$$
by the formula 
\begin{equation}\label{gammadef}
(\gamma_t (g\hat{\odot}\mathbf{T}))^{(n)}_{x,y}=T^{(n)}_{x,y}\hat{\otimes}g_{t^2}(B_{0,t}(x))
\end{equation}
Define 
$$
\gamma_t:\prod^{U,L}\C[\wgn]^{\Ga_n}\to \umlk
$$
by applying the above map pointwise.

\begin{gammalem}\label{gammalem}
The maps defined above extend to asymptotic morphisms
$$
\gamma_t:\mathcal{S}\hat{\otimes}\prod^{U,max}C^*(\wgn)^{\Ga_n}\to \umk
$$
and 
$$
\gamma_t:\mathcal{S}\hat{\otimes}\prod^{U,L,max}C^*(\wgn)^{\Ga_n}\to \umlk.
$$
\end{gammalem}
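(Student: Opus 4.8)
The plan is to follow the same three-part template used in the proofs of Lemmas \ref{alphalem} and \ref{betalem}, and, as there, to treat in detail only the non-localized case
$$
\gamma_t:\mathcal{S}\hat{\otimes}\prod^{U,max}C^*(\wgn)^{\Ga_n}\to \umk,
$$
the localization version following by applying the same estimates pointwise in $t$. First I would check that $\gamma_t$, as defined on $\mathcal{S}\hat{\odot}\prod^U\C[\wgn]^{\Ga_n}$, genuinely takes values in $\umk$: the $\Ga_n$-equivariance condition follows from the equivariance of the family $\{B_{0,t}(x)\}_{x\in P_R(\wgn)}$ noted above (each $g\in\Ga_n$ conjugates $B_{0,t}(x)$ to $B_{0,t}(gx)$); the local compactness of each $g_{t^2}(B_{0,t}(x))$, hence the membership of each matrix entry $T^{(n)}_{x,y}\hat{\otimes}g_{t^2}(B_{0,t}(x))$ in $\mathcal{K}(\Hi)\hat{\otimes}\mathcal{K}(\mathcal{L}^2_n)$, follows because $B_{0,t}(x)$ is a sum of harmonic-oscillator-type operators $D_k+C_{k,x}$ and so has compact resolvent (cf.\ \cite{Higson:1999be}); and the finiteness, boundedness and propagation conditions are inherited verbatim from $\mathbf{T}$, since tensoring the entry $T^{(n)}_{x,y}$ with the norm-$\leq\|g\|_\infty$ operator $g_{t^2}(B_{0,t}(x))$ alters neither the support in $P_R(\wgn)\times P_R(\wgn)$ nor the row/column counts.

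Next I would show that $\gamma_t$ is asymptotically multiplicative and $*$-preserving, i.e.\ that it descends to a $*$-homomorphism into the asymptotic algebra $\mathfrak{A}(\umk)=C_b([1,\infty),\umk)/C_0([1,\infty),\umk)$. Expanding a product of elementary tensors, the $(x,z)$-entry of $\gamma_t(g_1\hat{\otimes}\mathbf{T}_1)\gamma_t(g_2\hat{\otimes}\mathbf{T}_2)$ is $\sum_y (T_1^{(n)})_{x,y}(T_2^{(n)})_{y,z}\hat{\otimes}(g_1)_{t^2}(B_{0,t}(x))(g_2)_{t^2}(B_{0,t}(y))$ (up to Koszul signs), while that of $\gamma_t((g_1\hat{\otimes}\mathbf{T}_1)(g_2\hat{\otimes}\mathbf{T}_2))$ is $\sum_y (T_1^{(n)})_{x,y}(T_2^{(n)})_{y,z}\hat{\otimes}(g_1g_2)_{t^2}(B_{0,t}(x))$; since both sums run over the (uniformly finitely many) $y$ with $(T_1^{(n)})_{x,y}\neq0$, for which $d(x,y)$ is bounded by the uniform propagation bound, these expressions will agree to within $\epsilon$ for large $t$ once I establish the uniform estimate: for all $R,r,c,\epsilon>0$ there is $t_0>0$ such that for all $n$, all $x,y\in P_R(\wgn)$ with $d(x,y)\leq r$, all $t>t_0$, and all $g\in\mathcal{S}$ with $\text{supp}(g)\subseteq[-r,r]$ and $\|g'\|\leq c$,
$$
\big\|g_{t^2}(B_{0,t}(x))-g_{t^2}(B_{0,t}(y))\big\|<\epsilon .
$$
($*$-compatibility is immediate from self-adjointness of $B_{0,t}(x)$.) This is the analogue for the Bott--Dirac operators $B_{0,t}(x)$ of \cite[Lemmas 7.3 and 7.4]{Yu:200ve}: after organizing the direct-sum decompositions of $\mathcal{L}^2_n$ along the filtrations $(W_k(x))$ and $(W_k(y))$, the operators $B_{0,t}(x)$ and $B_{0,t}(y)$ differ by a perturbation whose size is controlled by $\|f_n(x)-f_n(y)\|$, which by uniformity of the coarse embeddings $f_n:\wgn\to\V_n$ and uniform bounded geometry of $(G_n)$ is bounded in terms of $d(x,y)$ independently of $n$; since $g_{t^2}(s)=g(t^{-2}s)$ while $B_{0,t}$ carries coefficients $1+kt^{-1}$, this perturbation is scaled to zero in operator norm as $t\to\infty$, at a rate depending only on $r,c$ and the common distortion functions $\rho_\pm$, hence uniformly in $n$; the analytic details would be quoted from \cite[Section 7]{Yu:200ve}.

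Finally, having realized $\gamma_t$ as a $*$-homomorphism from $\mathcal{S}\hat{\odot}\prod^U\C[\wgn]^{\Ga_n}$ into $\mathfrak{A}(\umk)$, I would extend it to the completion exactly as at the end of the proof of Lemma \ref{betalem}: its restriction to the $\prod^U\C[\wgn]^{\Ga_n}$-factor extends to $\prod^{U,max}C^*(\wgn)^{\Ga_n}$ by the universal property of the maximal norm, and since in the defining formula the $\mathcal{S}$-factor acts on the $\mathcal{L}^2_n$ coordinate and the $\prod^U\C[\wgn]^{\Ga_n}$-factor on the $l^2(Z_R,\Hi)$ coordinate, the two resulting $*$-homomorphisms into $\mathfrak{A}(\umk)$ have commuting ranges and so combine --- via the universal property of the maximal tensor product and nuclearity of $\mathcal{S}$ --- to the desired $*$-homomorphism $\mathcal{S}\hat{\otimes}\prod^{U,max}C^*(\wgn)^{\Ga_n}\to\mathfrak{A}(\umk)$; the same argument applied pointwise in $t$ handles the localization statement. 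I expect the main obstacle to be the uniform functional-calculus estimate in the second step: comparing $g_{t^2}(B_{0,t}(x))$ with $g_{t^2}(B_{0,t}(y))$ is delicate precisely because these are unbounded operators built from \emph{different} exhaustions of $\mathcal{L}^2_n$, but, as in the earlier sections, equivariance forces the relevant constants to be uniform over $n$ once they are phrased through $d(x,y)$ and the distortion control of the $f_n$.
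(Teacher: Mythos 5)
Your proposal is correct and follows the same template the paper uses for $\alpha_t$ and $\beta_t$; the paper itself disposes of Lemma \ref{gammalem} in one line by appealing to the proof of Lemma \ref{betalem}, while you spell out the three steps (values in $\umk$, asymptotic $*$-homomorphism via a uniform estimate on $\|g_{t^2}(B_{0,t}(x))-g_{t^2}(B_{0,t}(y))\|$, extension to the completion via universality and nuclearity of $\mathcal{S}$). One small observation: the paper invokes the $\beta_t$ argument, which compares $\beta(x)(g_t)$ and $\beta(y)(g_t)$ inside $\mathcal{A}(\V_n)$, whereas the estimate you correctly identify as the crux for $\gamma_t$ --- comparing functional calculus of the full Bott--Dirac operators $B_{0,t}(x)$ and $B_{0,t}(y)$ --- is actually closer in flavor to the $\alpha_t$ analysis and to \cite[Lemmas 7.3--7.4]{Yu:200ve}; your version is the more precise account of what the elliptic-operator estimates are doing, though the structural outline is the same as the paper intends.
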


\begin{proof}
The proof is essentially the same as that showing $\beta_t$ gives rise to an asymptotic morphism in Lemma \ref{betalem} above.
\end{proof}

We denote the maps induced on $K$-theory by these asymptotic morphisms
$$
\gamma_*:K_*(\prod^{U,max}C^*(\wgn)^{\Ga_n})\to K_*(\umk)
$$
and 
$$
\gamma_*:K_*(\prod^{U,L,max}C^*(\wgn)^{\Ga_n})\to K_*(\umlk)
$$
respectively.

\begin{gammaiso}\label{gammiso}
The asymptotic morphisms $\gamma_t$ in Lemma \ref{gammalem} above induce isomorphisms on $K$-theory.
\end{gammaiso}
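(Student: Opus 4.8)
The plan is to transplant the infinite-dimensional Bott periodicity argument of Higson--Kasparov--Trout \cite{Higson:1999be}, in the coarse form of \cite[Section 7]{Yu:200ve}, making it equivariant for the (proper) actions of the $\Ga_n$ and uniform over $n$. As in Lemma \ref{gammalem} there are two assertions to verify, for
$$
\gamma_t:\mathcal{S}\hat{\otimes}\prod^{U,max}C^*(\wgn)^{\Ga_n}\to\umk
\qquad\text{and for}\qquad
\gamma_t:\mathcal{S}\hat{\otimes}\prod^{U,L,max}C^*(\wgn)^{\Ga_n}\to\umlk ;
$$
I will treat the first, the second being obtained by repeating everything with the extra $[0,\infty)$-parameter of the localization algebras treated as a spectator, checking only that the condition ``$\sup_n\text{prop}(f^{(n)}(t))\to 0$'' is preserved under each homotopy.

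The starting point is the structural description implied by \eqref{gammadef}: the $(x,y)$-entry of $\gamma_t(g\hat{\otimes}\mathbf{T})$ is $T^{(n)}_{x,y}\hat{\otimes}g_{t^2}(B_{0,t}(x))$, so $\gamma_t$ is the external product of the identity on $\prod^{U,max}C^*(\wgn)^{\Ga_n}$ with the family of Bott--Dirac asymptotic morphisms $g\mapsto g_{t^2}(B_{0,t}(x))$, $\mathcal{S}\to\mathcal{K}(\mathcal{L}^2_n)$, indexed by the points $x\in P_R(\wgn)$. No single member of this family is $\Ga_n$-equivariant -- the affine action of $\Ga_n$ on $\V_n$ is proper, hence fixed-point free -- but the family is equivariant, $g\in\Ga_n$ carrying the member at $x$ to the member at $gx$ (the equivariance remark following the definition of $B_{m,t}(x)$); this is precisely the device by which a fixed-point-free proper action substitutes for an action with a fixed point. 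One also records that $B_{0,t}(x)$ has a canonical unit ground state $\xi_x\in\mathcal{L}^2_n$ -- the image of $1\in\mathbb{C}\cong\mathcal{L}^2(W_0(x))$ under the structural inclusion, using that each $\xi_0\in\mathcal{L}^2(V_k(x))$ lies in $\ker(D_k+C_{k,x})$ -- and that $\{\xi_x\}_x$ is again an equivariant family; write $P_{\xi_x}$ for the rank-one projection onto $\xi_x$.

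The key steps are then: (i) deform $\gamma_t$ -- via the rescaling homotopy $\lambda\mapsto\sum_k(1+\lambda kt^{-1})(D_k+C_{k,x})$, $\lambda\in[0,1]$, together with a straight-line homotopy of the base point inside $W_1(x)$ where it is needed -- to a more rigid asymptotic morphism, all homotopies remaining inside the same maximal uniform algebra and changing propagations only by amounts uniform in $n$; (ii) run the Higson--Kasparov rotation homotopy, doubling $\mathcal{L}^2_n$ and rotating the two copies into one another, to deform (the stabilization of) $\gamma_t$ into the external product of the identity with the fibered equivariant Bott--Dirac morphism $\mathcal{S}\to\mathcal{K}(\mathcal{L}^2_n)$ attached to the base points $f_n(x)$; (iii) conclude, using that the latter morphism is a $K$-theory isomorphism by the infinite-dimensional Bott periodicity theorem of \cite{Higson:1999be}, and that the ``fibered over $P_R(\wgn)$, equivariantly for $\Ga_n$'' decoration does not affect $K$-theory because the equivariant family $\{\xi_x\}$ lets one pass down to the corner $\mathbf{T}\mapsto(T^{(n)}_{x,y}\hat{\otimes}P_{\xi_x})_{x,y}$, a $K$-theory equivalence. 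Throughout, each analytic estimate in (ii) is reduced to the finite-dimensional subspaces $W_k(x)\subseteq\V_n$ of Definition \ref{dubyas}, where it is the classical finite-dimensional Bott computation, and then uniform bounded geometry of the $\wgn$ -- equivalently, uniformity of the embeddings $f_n$ -- is used to take all constants ($t_0$'s, Lipschitz bounds, propagation increments) independent of $n$; this is where ``equivariance forces uniformity''. One also checks at each stage, via the universal property of the maximal norms, nuclearity of $\mathcal{S}$, and the uniform bound on the number of nonzero entries in any row or column, that one never leaves the uniform maximal algebras.

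The main obstacle is step (ii). The rotation homotopy is the analytic heart of infinite-dimensional Bott periodicity, and here it must be carried out simultaneously \emph{entrywise} (to keep propagations uniformly controlled), \emph{equivariantly} (which works because the doubling and the rotation are manufactured solely from the linear structure of $\V_n$, on which $\Ga_n$ acts by affine isometries, so the equivariant family structure of the operators $B_{m,t}(x)$ survives every stage), and \emph{with the maximal norm} (there is no recourse to exactness or to a reduced completion, so the universal property of the maximal norm must be invoked at each step). Once step (ii) is in place, the localized statement follows by the same argument with the localization parameter as a spectator, noting that the condition ``propagation $\to 0$'' is stable under all the homotopies; this establishes Lemma \ref{gammiso}.
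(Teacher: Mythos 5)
Your outline misses the essential mechanism of the paper's proof, and without it the steps you describe do not go through. The paper does \emph{not} invoke the Higson--Kasparov--Trout rotation homotopy and infinite-dimensional Bott periodicity at this point. Instead, following the a-T-menable argument of Higson--Kasparov, it first writes each proper affine action $\sigma^{(n)}(g):v\mapsto\pi^{(n)}(g)v+b^{(n)}(g)$ and introduces the one-parameter family of actions $\sigma^{(n)}_s(g):v\mapsto\pi^{(n)}(g)v+s\,b^{(n)}(g)$. This produces a continuous field $\umk_{[0,1]}$ of $C^*$-algebras and a compatible field of asymptotic morphisms $\gamma_{s,t}$ given by the same formula but with $B_{0,t}(s f(x))$; since the evaluation maps $\epsilon_s$ all induce $K$-theory isomorphisms, one is reduced to $s=0$, where the action is \emph{linear} and the basepoint in $B_{0,t}(0)$ is $x$-independent. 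At that point a simple rescaling homotopy (not a rotation homotopy) identifies $\gamma_{0,t}$ on $K$-theory with $g\hat{\otimes}\mathbf{T}\mapsto(T^{(n)}_{x,y}\hat{\otimes}g(0)P)_{x,y}$, with $P$ the rank-one projection onto $\ker B_{0,t}(0)$, which is manifestly a $K$-theory equivalence. No appeal to Bott periodicity is needed.

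The reason your proposed step (ii) fails is exactly the obstruction the action homotopy is designed to remove: the rotation on $\V_n\oplus\V_n$ is a \emph{linear} isometry, while $\Ga_n$ acts by \emph{affine} isometries with nontrivial cocycle $b^{(n)}$. These do not commute (a short computation shows $g R(\theta) g^{-1}=R(\theta)$ forces $b^{(n)}(g)=0$), so conjugating by $g$ does \emph{not} carry the rotation-deformed operator at $x$ to the one at $gx$, and the ``equivariant family'' property of the $B_{m,t}(x)$ is destroyed mid-homotopy. Your parenthetical justification that the rotation ``is manufactured solely from the linear structure of $\V_n$'' is precisely why it is incompatible with the affine structure of the action; this is the point where you must deform the action to its linear part before rotating, and having done so the rotation is no longer needed. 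A secondary, smaller problem: the map $\mathbf{T}\mapsto(T^{(n)}_{x,y}\hat{\otimes}P_{\xi_x})_{x,y}$ in your step (iii) is not a $*$-homomorphism, since $P_{\xi_x}P_{\xi_z}\neq P_{\xi_x}$ when $x\neq z$; one would need the off-diagonal rank-one operators $|\xi_x\rangle\langle\xi_y|$ or, better, the paper's device of first making the ground state $x$-independent via the action homotopy.
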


\begin{proof}
The following argument is adapted from the proof of the Baum-Connes conjecture for a-T-menable groups -- see for example \cite[proof of Theorem 3.11]{Higson:2004la}.  We will, as usual, only consider the map
$$
\gamma_*:K_*(\prod^{U,max}C^*(\wgn)^{\Ga_n})\to K_*(\umk);
$$
the case of the localization algebras is similar.

For each $n\in\N$ write the $\Ga_n$ action on $\V_n$ as 
$$
\sigma^{(n)}(g):v\mapsto \pi^{(n)}(g)v+b^{(n)}(g),
$$
where $\pi^{(n)}$ is the linear part of the action (and thus defines a group homomorphism from $\Ga_n$ into the linear isometries on $\V$), and $b^{(n)}:\Ga_n\to \V_n$ is a proper cocycle for this action.  Define for each $s\in[0,1]$ a new action by 
$$\sigma_s^{(n)}:v\mapsto \pi^{(n)}(g)v+sb^{(n)}(g)$$
(thus in some sense $\sigma^{(n)}_s$ forms a `homotopy' between the original action and its linear part).  Write $\umk_s$ for the $C^*$-algebra defined as $\umk$, but where the $\sigma^{(n)}_s$-action has been used to define the $\Ga_n$ action on $\mathcal{K}(\mathcal{L}^2_n)$ for each $n$.  Define now for each $s\in[0,1]$ an asymptotic morphism 
$$
\gamma_{s,t}:\prod^{U,max}C^*(P_R(\wgn))^\Ga_n\to \umk_s
$$
by the formula 
$$
(\gamma_t (g\hat{\odot}\mathbf{T}))^{(n)}_{x,y}=T^{(n)}_{x,y}\hat{\otimes}g_{t^2}(B_{0,t}(sf(x))),
$$
(the same argument as in Lemma \ref{gammalem} shows that these are all asymptotic morphisms).

The $C^*$-algebras $\umk_s$ then fit together into a continuous field over $[0,1]$, which we denote by 
$$
\umk_{[0,1]},
$$
and which is equipped with an asymptotic morphism 
$$
\gamma_{[0,1],t}:\prod^{U,max}C^*(P_R(\wgn))^{\Ga_n}\to \umk_{[0,1]}
$$
defined by piecing together the various $\gamma_{s,t}$s.  Note moreover that $\umk_{[0,1]}$ is equipped with a family of evaluation maps 
$$\epsilon_s:\umk_{[0,1]}\to\umk_s$$
that fit into a commutative diagram
$$
\xymatrix{ \prod^{U,max}C^*(P_R(\wgn))^{\Ga_n} \ar@{=}[d] \ar[r]^(.42){\gamma_{[0,1],t}} & \umk_{[0,1]} \ar[d]^{\epsilon_s} \\ \prod^{U,max}C^*(P_R(\wgn))^{\Ga_n} \ar[r]^(.42){\gamma_{s,t}}&  \umk_s };
$$
as the maps $\epsilon_s$ all induce isomorphisms on $K$-theory, to show that $\gamma_t=\gamma_{1,t}$ induces an isomorphism on $K$-theory, it suffices to show that the map $\gamma_{0,t}$ does.

Let then $p^{(n)}$ be the projection onto the kernel of $B_{0,t}(0)$ in $\mathcal{K}(\mathcal{L}^2_n)$.  Then $\gamma_{0,t}$, which is given by the formula
$$
(\gamma_{0,t} (g\hat{\odot}\mathbf{T}))^{(n)}_{x,y}=T^{(n)}_{x,y}\hat{\otimes}g_{t^2}(B_{0,t}(0))
$$
is homotopic to the map defined by 
$$
(\gamma_{P,t}(g\hat{\odot} \mathbf{T}))^{(n)}_{x,y} \mapsto T^{(n)}_{x,y}\hat{\otimes}g(0) P
$$
via the homotopy
$$
(\gamma_{0,t}(s)g\hat{\otimes}\mathbf{T})^{(n)}_{x,y}:=\left\{\begin{array}{ll} T^{(n)}_{x,y}\hat{\otimes}g_t(s^{-1}B_{0,t}(0)) & s\in (0,1] \\ T^{(n)}_{x,y}\hat{\otimes}g(0)P & s=0 \end{array}\right.;
$$
this completes the proof.
\end{proof}

At this point, we have that the diagram in line \eqref{ddd} above exists; it is moreover immediate from the definitions that it is commutative.  The proof of Theorem \ref{dddthe}, and therefore also of Theorem \ref{maxthe} will thus be completed by the next lemma.

\begin{complem}\label{complem}
On the level of both the uniform products of Roe algebras, and of the uniform products of localization algebras, the composition of Dirac and Bott morphisms is the Gamma morphism; in symbols $\alpha_*\circ\beta_*=\gamma_*$.
\end{complem}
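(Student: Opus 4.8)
The plan is to prove the identity $\alpha_t\circ\beta_t=\gamma_t$ \emph{on the nose} at the level of the dense algebraic cores, and then to invoke the formal behaviour of composition of asymptotic morphisms to transfer this to $K$-theory. The statement for the uniform products of Roe algebras and the one for the uniform products of localization algebras are proved by exactly the same computation, the latter being obtained from the former by applying everything pointwise in the localization parameter, so I will only discuss the Roe algebra case.

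First I would unwind the composite on an elementary tensor $g\hat{\odot}\mathbf{T}$, with $g\in\mathcal{S}$ and $\mathbf{T}=(T^{(0)},T^{(1)},\dots)\in\prod^U\C[\wgn]^{\Ga_n}$. By definition the $n$-th component of $\beta_t(g\hat{\otimes}\mathbf{T})$ has $(x,y)$-entry $\beta(x)(g_t)\hat{\otimes}T^{(n)}_{x,y}\in\ahn\hat{\otimes}\mathcal{K}(\Hi)$. The key point is that $\beta(x)=\beta_0(x)$ is exactly the structure homomorphism $\mathcal{A}(W_0(x))\to\ahn$ from Definition \ref{dubyas}, and that $W_0(x)=\{f_n(x)\}$ is zero-dimensional, so that $\mathcal{A}(W_0(x))\cong\mathcal{S}$ and this entry is already written in the form $(\beta_0(x)\hat{\otimes}1)(T'_{x,y})$ required by part 6 of Definition \ref{twisted}, with $m=0$ and $T'_{x,y}=g_t\hat{\otimes}1\hat{\otimes}T^{(n)}_{x,y}$ (the middle factor being the unit of $\mathcal{C}(W_0(x))\cong\C$). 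Thus here $\alpha_t$ is \emph{forced} to use $m=0$, and no appeal to the independence-of-$m$ remark in the proof of Lemma \ref{alphalem} is needed. Applying $\theta^0_t(x)$, and using that $B_{0,t}(x)|_{W_0(x)}$ is simply the full Bott--Dirac operator $B_{0,t}(x)$ since $W_0(x)$ carries no directions, gives
\[
\alpha_t\big(\beta_t(g\hat{\otimes}\mathbf{T})\big)^{(n)}_{x,y}=\theta^0_t(x)\big(g_t\hat{\otimes}1\hat{\otimes}T^{(n)}_{x,y}\big)=(g_t)_t\big(B_{0,t}(x)\big)\hat{\otimes}T^{(n)}_{x,y}=g_{t^2}\big(B_{0,t}(x)\big)\hat{\otimes}T^{(n)}_{x,y};
\]
the decisive arithmetic is that the rescaling $g'\mapsto g'_t$ built into $\theta^0_t$ composes with the rescaling $g\mapsto g_t$ already present in $\beta_t$ to produce exactly the single rescaling $g\mapsto g_{t^2}$ appearing in the definition of $\gamma_t$. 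Up to the flip $\mathcal{K}(\mathcal{L}^2_n)\hat{\otimes}\mathcal{K}(\Hi)\cong\mathcal{K}(\Hi)\hat{\otimes}\mathcal{K}(\mathcal{L}^2_n)$ --- which carries no Koszul sign since $\Hi$ is trivially graded, and which induces the identity on $K$-theory --- the right-hand side is precisely the $(x,y)$-entry $T^{(n)}_{x,y}\hat{\otimes}g_{t^2}(B_{0,t}(x))$ of $\gamma_t(g\hat{\odot}\mathbf{T})$ from line \eqref{gammadef}. Hence $\alpha_t\circ\beta_t=\gamma_t$ as families of maps on $\mathcal{S}\hat{\odot}\prod^U\C[\wgn]^{\Ga_n}$.

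It remains to pass to $K$-theory. Since the termwise composite $t\mapsto\alpha_t\circ\beta_t$ agrees with $\gamma_t$ on the dense $*$-subalgebra $\mathcal{S}\hat{\odot}\prod^U\C[\wgn]^{\Ga_n}$, and $\gamma_t$ extends to an asymptotic morphism into $\umk$ by Lemma \ref{gammalem}, the termwise composite is itself an asymptotic morphism, equal to $\gamma$. A termwise composition of two asymptotic morphisms which happens to be an asymptotic morphism represents their composite in the asymptotic (E-theory) category --- so that no reparametrisation of the index is needed in this case, cf.\ \cite{Guentner:2000fj} --- and therefore $\alpha_*\circ\beta_*=(\alpha_t\circ\beta_t)_*=\gamma_*$, as claimed; the case of the localization algebras is identical. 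I expect the only delicate point to be this last categorical remark, together with the bookkeeping of the two rescalings and the tensor-factor flip: the lemma carries no analytic content and really just records that the Dirac, Bott and Gamma constructions of this section were arranged compatibly.
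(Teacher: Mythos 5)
Your proof is correct, and it is genuinely more explicit than the paper's. The paper's proof is two sentences of citations: it appeals to \cite[Lemma A.2]{Willett:2010ca} to identify $\alpha_*\circ\beta_*$ with the map induced by the termwise ``naive composition'' $\alpha_t\circ\beta_t$, and then invokes the argument of \cite[Proposition 4.2]{Higson:1999be} (cf.\ \cite[Proposition 7.7]{Yu:200ve}) to conclude that $\alpha_t\circ\beta_t$ is \emph{asymptotically equivalent} to $\gamma_t$. You instead unwind the definitions on elementary tensors and show the composition agrees with $\gamma_t$ \emph{on the nose}: the double rescaling $(g_t)_t=g_{t^2}$, the triviality of $\mathcal{C}(W_0(x))\cong\C$ and of $\widetilde{1_t}$, and the tensor-factor flip (with no Koszul sign because $\Hi$ is trivially graded) are exactly the right observations. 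This sharpens the paper's claim from asymptotic equivalence to literal equality of families of maps, which makes the appeal to Higson--Kasparov--Trout's estimates unnecessary, and is arguably cleaner.

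Two points to be aware of, neither a real gap. First, you say $\alpha_t$ is ``forced'' to use $m=0$; strictly speaking the definition of $\alpha_t$ allows any $m$ for which part 6 of Definition \ref{twisted} is witnessed, and the paper defers well-definedness (up to asymptotic equivalence) to the proof of Lemma \ref{alphalem}. Your computation is really ``with the natural choice $m=0$ the composite equals $\gamma_t$ exactly'', and then well-definedness of $\alpha_t$ is what guarantees this represents $\alpha_*$. It would be cleaner to say you \emph{choose} $m=0$, which is legitimate since $\beta_t(g\hat{\otimes}\mathbf{T})$ is manifestly in the image of $\beta_0(x)\hat{\otimes}1$. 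Second, the remark that a termwise composition which happens to be an asymptotic morphism automatically represents the E-theoretic composite is precisely the content the paper outsources to \cite[Lemma A.2]{Willett:2010ca}; it is not a formal triviality of the asymptotic category and deserves a citation to that lemma (or an equivalent statement in \cite{Guentner:2000fj}) rather than being stated as folklore. With that reference in place your argument is complete.
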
 

\begin{proof}
Using \cite[Lemma A.2]{Willett:2010ca}, the product $\alpha_*\circ\beta_*$ is the map induced on $K$-theory by the `naive composition' of asymptotic morphisms $\alpha_t\circ\beta_t$.  This, however, is asymptotically equivalent to $\gamma_t$ by the argument of \cite[Proposition 4.2]{Higson:1999be} (cf.\ also \cite[proof of Proposition 7.7]{Yu:200ve}).
\end{proof}

\section{Geometric property (T)}\label{geotsec}

In this section we introduce \emph{geometric property (T)}, and show that the maximal coarse assembly map fails to be surjective for any space of graphs $X=\sqcup G_n$ with geometric property (T). Thus geometric property (T) forms a strong opposite to having girth tending to infinity.  There is a good analogy here with Kazhdan's property (T) for a group $\Ga$, which is an obstruction to surjectivity of the maximal Baum-Connes assembly map for $\Ga$; we also show that the space of graphs constructed from a sequence of finite quotients of a group $\Ga$ had geometric property (T) if and only if $\Ga$ itself has property (T).  The definition of geometric property (T) was suggested by work of Oyono-Oyono and the second author \cite{Oyono-Oyono:2009ua}. 

\begin{geot}\label{geot}
Let $X=\sqcup G_n$ be a space of graphs, and recall that 
$$\Delta:=\prod_n(\Delta_n\otimes q)\in\C[X]$$
is defined to be the direct product of the graph Laplacians on each $G_n$ tensored by some rank one projection $q\in\mathcal{K}$.
$X$ is said to have \emph{geometric property (T)} if the spectrum of $\Delta$ in $C^*_{max}(X)$ is contained in $\{0\}\cup[c,2]$ for some $c>0$.
\end{geot}
\noindent
Of course, if we assume that the spectrum of $\Delta$ as an element of $C^*(X)$ is contained in $\{0\}\sqcup[c,2]$ for some $c>0$, we are asserting precisely that $X$ is an expander.  Thus `being an expander' is weaker than `having geometric property (T)'.

One could also define geometric property (T) for general (possibly disconnected) graphs, and probably even for general metric spaces; it seems likely, moreover, that any reasonable definition would be a coarse invariant (although we have not checked this).  Geometric property (T) may have fairly different properties outside of the current context: for example, a connected graph with \emph{property A} (e.g.\ the Cayley graph of an exact group) has geometric property (T) if and only if it is not \emph{amenable} in the sense of \cite{Block:1992qp} (and therefore if and only if not amenable in the group-theoretic sense, if the Cayley graph of a group).  These issues seem to merit further investigation.

The following lemma gives some examples.

\begin{geotex}\label{geotex}
Say $\Gamma$ is a finitely generated infinite group and $(\Gamma_n)_{n\in\N}$ an infinite nested sequence of finite index normal subgroups such that $\cap\Ga_n=\{e\}$.  Then $X=\sqcup \Ga/\Ga_n$ has geometric property (T) if and only if $\Ga$ has property (T).
\end{geotex}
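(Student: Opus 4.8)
The plan is to prove the spectral identity
$$
\mathrm{spec}_{C^*_{max}(X)}(\Delta)=\mathrm{spec}_{C^*_{max}(\Ga)}(\Delta_\Ga),
$$
where $\Delta_\Ga:=1-\tfrac1{|S|}\sum_{s\in S}s\in C^*_{max}(\Ga)$ and $S$ is the (symmetric) generating set defining the Cayley graphs $G_n=\Ga/\Ga_n$, and then to invoke the classical characterisation of property (T): $\Ga$ has property (T) if and only if $0$ is isolated in $\mathrm{spec}_{C^*_{max}(\Ga)}(\Delta_\Ga)$, equivalently this spectrum lies in $\{0\}\cup[c,2]$ for some $c>0$ (equivalently, $C^*_{max}(\Ga)$ contains a Kazhdan projection). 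Given the displayed identity, both implications of the lemma are immediate from the definition of geometric property (T). Throughout I suppress the rank-one projection $q$ appearing in the definition of $\Delta$: tensoring an operator with $q$ only ever adjoins the point $0$ to its spectrum, and $0$ lies in all the spectra below.

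\emph{Step 1 (the structural exact sequence).} Since the $\Ga_n$ are nested with $\cap_n\Ga_n=\{e\}$, the (generally \emph{non-universal}) covers $\Ga\to\Ga/\Ga_n$, with deck groups $\Ga_n$ and $\Ga$ carrying its Cayley graph, are asymptotically faithful: for each $R$ they are eventually $2R$-metric covers. The proof of Lemma \ref{unises} uses only asymptotic faithfulness of the covers together with uniform bounded geometry — not the large-girth hypothesis, nor any Hilbert space embedding — so the same argument, applied to these covers, produces a short exact sequence
$$
0\to\mathcal{K}(l^2(X,\Hi))\to C^*_{max}(X)\to Q\to0,\qquad Q:=\frac{\prod^{U,max}C^*(\Ga)^{\Ga_n}}{\oplus C^*_{max}(\Ga)^{\Ga_n}},
$$
whose quotient map $\psi$ sends $\Delta$ to the class of the constant sequence $\mathbf T=(\Delta_\Ga)_{n\in\N}$, each entry being the ($\Ga_n$-equivariant, translation-invariant) Laplacian of the Cayley graph of $\Ga$. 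Because $\mathcal K(l^2(X,\Hi))$ is an essential ideal with a unique irreducible representation, every irreducible representation of $C^*_{max}(X)$ is either the defining one on $l^2(X,\Hi)$ or factors through $Q$; hence $\mathrm{spec}_{C^*_{max}(X)}(\Delta)=\overline{\bigcup_n\mathrm{spec}(\Delta_n)}\cup\mathrm{spec}_Q(\psi(\Delta))$. As $l^2(\Ga/\Ga_n)$ is a unitary representation of $\Ga$ carrying $\Delta_\Ga$ to $\Delta_n$, the first set lies in $\mathrm{spec}_{C^*_{max}(\Ga)}(\Delta_\Ga)$, so it remains to compute $\mathrm{spec}_Q(\psi(\Delta))$.

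\emph{Step 2 (the spectrum in $Q$).} I claim $\mathrm{spec}_Q([\mathbf T])=\mathrm{spec}_{C^*_{max}(\Ga)}(\Delta_\Ga)=:\Sigma$. The constant sequences $\mathbf u_g:=(\lambda(g))_n$ lie (after restoring $q$, inside the corner cut by the projection $\mathbf e:=(1\otimes q)_n$) in $\prod^U\C[\Ga]^{\Ga_n}$, satisfy $\mathbf u_g\mathbf u_h=\mathbf u_{gh}$ and $\mathbf u_g^*\mathbf u_g=\mathbf u_g\mathbf u_g^*=\mathbf e$, and $\mathbf T=\mathbf e-\tfrac1{|S|}\sum_s\mathbf u_s$; hence any $*$-representation $\sigma$ of $\prod^U\C[\Ga]^{\Ga_n}$ restricts, via $g\mapsto\sigma(\mathbf u_g)$, to a unitary representation of $\Ga$ on $\sigma(\mathbf e)H$ with respect to which $\sigma(\mathbf T)$ acts as the image of $\Delta_\Ga$; so $\|p(\sigma(\mathbf T))\|\le\|p(\Delta_\Ga)\|_{C^*_{max}(\Ga)}$ for every polynomial $p$, giving $\mathrm{spec}_{\prod^{U,max}C^*(\Ga)^{\Ga_n}}(\mathbf T)\subseteq\Sigma$. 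Conversely, coordinate evaluations extend to $*$-homomorphisms $\prod^{U,max}C^*(\Ga)^{\Ga_n}\to C^*_{max}(\Ga)^{\Ga_n}$, and $\mathrm{spec}_{C^*_{max}(\Ga)^{\Ga_n}}(\Delta_\Ga)=\Sigma$: one inclusion comes from $g\mapsto\lambda(g)\colon C^*_{max}(\Ga)\to M(C^*_{max}(\Ga)^{\Ga_n})$, the other from the fact that every unitary representation $\pi$ of $\Ga$ embeds into $\mathrm{Ind}_{\Ga_n}^\Ga\mathrm{Res}_{\Ga_n}^\Ga\pi\cong\pi\otimes l^2(\Ga/\Ga_n)$ (finite index). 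Thus $\mathrm{spec}_{\prod^{U,max}C^*(\Ga)^{\Ga_n}}(\mathbf T)=\Sigma$. Finally, passing to the quotient by $\oplus C^*_{max}(\Ga)^{\Ga_n}$ does not shrink this spectrum: for $\lambda\notin\Sigma$, $(\Delta_\Ga-\lambda)^{-1}$ is a norm-limit of polynomials in $\Delta_\Ga$, so the constant sequence it defines lies in $\prod^{U,max}C^*(\Ga)^{\Ga_n}$ and inverts $\mathbf T-\lambda\mathbf e$; for $\lambda\in\Sigma$, $\Delta_\Ga-\lambda$ is non-invertible in every $C^*_{max}(\Ga)^{\Ga_n}$, so $\|(\Delta_\Ga-\lambda)a-1\|\ge1$ there for all $a$, precluding invertibility of $\mathbf T-\lambda\mathbf e$ modulo $\oplus C^*_{max}(\Ga)^{\Ga_n}$. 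Hence $\mathrm{spec}_Q([\mathbf T])=\Sigma$, which combined with Step 1 yields the displayed identity.

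\emph{Main obstacle.} I expect the two delicate points to be: (i) verifying that the construction and proof of Lemma \ref{unises} genuinely go through for the non-universal covers $\Ga\to\Ga/\Ga_n$ — the large-girth hypothesis enters the rest of the paper but is not used in building the splitting $\psi$, which needs only that the covers become metric covers of every radius — and (ii) controlling the maximal norm on $\prod^{U,max}C^*(\Ga)^{\Ga_n}$, which is a priori strictly larger than $\sup_n\|\cdot\|_{max}$. The device resolving (ii) is the ``diagonal'' copy of $\Ga$ sitting in the uniform product via $g\mapsto(\lambda(g))_n$, which forces every representation of the uniform product to carry a representation of $C^*_{max}(\Ga)$ sending $\Delta_\Ga$ to $\mathbf T$ on the corner cut by $\mathbf e$; everything else is bookkeeping with the rank-one projection $q$ and standard functional-calculus and induced-representation facts.
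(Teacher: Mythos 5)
Your argument is correct in outline but takes a fundamentally different and substantially longer route than the paper.  The paper's own proof is a single paragraph: the map $g\mapsto v_g\otimes q$ extends to an \emph{injective} $*$-homomorphism $C^*_{max}(\Ga)\to C^*_{max}(X)$ (injectivity is \cite[Proposition 2.8]{Oyono-Oyono:2009ua}, whose hypothesis is supplied by the existence of a $\Ga$-invariant state on $l^\infty(X)$), and an injective $*$-homomorphism of $C^*$-algebras preserves spectra, so $\mathrm{spec}_{C^*_{max}(X)}(\Delta)=\mathrm{spec}_{C^*_{max}(\Ga)}(\Delta_\Ga)$ and the equivalence with property (T) is classical.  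You instead reconstruct the needed spectral information from the structural short exact sequence of Lemma \ref{unises} (adapted to the non-universal covers $\Ga\to\Ga/\Ga_n$, which is fine, as these are asymptotically faithful by nestedness and triviality of $\cap\Ga_n$) together with an explicit spectral computation in the uniform product via the ``diagonal'' copy of $\Ga$.  Where the paper's route ``black-boxes'' the difficulty into the cited injectivity result, yours re-derives the same conclusion by hand; both end at the identity of spectra, but yours carries considerably more bookkeeping (coordinate evaluations, induced representations, $c_0$-ideal non-invertibility) that would all need to be checked line by line.

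One genuine slip: you assert that $\mathcal K(l^2(X,\Hi))$ is an \emph{essential} ideal in $C^*_{max}(X)$.  Essentiality here is equivalent to injectivity of the regular representation $C^*_{max}(X)\to C^*(X)\subseteq B(l^2(X,\Hi))$ (an ideal $J$ with $J\cap\mathcal K=0$ necessarily satisfies $J\mathcal K=0$, hence $J\subseteq\ker\lambda$, and conversely $\ker\lambda\cap\mathcal K=0$), i.e.\ to ``$\max=\text{reduced}$'' for the Roe algebra of $X$ -- which you should certainly not assume in the very setting where geometric property (T) is being tested.  Fortunately you do not actually need essentiality: for \emph{any} closed ideal $I\cong\mathcal K(H)$ in a $C^*$-algebra $A$, the primitive spectrum splits as $\hat A=\hat I\sqcup\widehat{A/I}$, and $\hat I$ is a single point whose unique irreducible representation of $A$ is determined (up to unitary equivalence) on the dense subspace $\pi(I)H$ by $\pi(a)\pi(k)\xi=\pi(ak)\xi$; this already forces $\mathrm{spec}_A(\Delta)=\mathrm{spec}(\lambda(\Delta))\cup\mathrm{spec}_{A/I}([\Delta])$, which is the only consequence you use.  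Replacing the essentiality appeal with this standard decomposition repairs Step 1; the rest of your argument then goes through as you sketch.
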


\begin{proof}
Fix a rank one projection $q\in\mathcal{K}$, and note that there is a map
\begin{equation}\label{alginj}
\C[\Ga]\to \C[X],~~~g\mapsto v_g\otimes q,
\end{equation}
where $v_g$ is the unitary element coming from the right action of $\Ga$ on each $\Ga/\Ga_n$.  Note that if $S\subseteq\Ga$ is the symmetric generating set with respect to which the graph structures on each $\Ga/\Ga_n$ are defined, then 
$$\Delta_\Ga:=I-\frac{1}{|S|}\sum_{s\in S}s\in\C[\Ga]$$
maps to $\Delta\in \C[X]$ under the $*$-homomorphism above. 

Now, it follows from \cite[Proposition 2.8]{Oyono-Oyono:2009ua} and the fact that $l^\infty(X)$ has a $\Ga$-invariant state that the map in line \eqref{alginj} above extends to an injective $*$-homomorphism 
$$
C^*_{max}(\Ga)\to C^*_{max}(X).
$$
Hence the spectrum of $\Delta_\Ga$ in $C^*_{max}(X)$ is the same as its spectrum in $C^*_{max}(\Ga)$; the latter is well-known to be contained in some $\{0\}\cup[c,2]$ if and only if $\Ga$ has property (T), however.
\end{proof}
\noindent
Thus geometric property (T) for spaces of graphs naturally extends the notion of `Margulis-type expander from a property (T) group'.  We currently do not know any other examples, but suspect such exist (see problem \ref{gtprob} at the end of this section).

\begin{geotthe}\label{geotthe}
Say $X=\sqcup G_n$ has geometric property (T).  Then the maximal coarse assembly map 
$$
\mu:\lim_{R\to\infty}K_*(P_R(X))\to K_*(C^*_{max}(X))
$$
is not surjective.
\end{geotthe}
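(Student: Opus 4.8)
The plan is to imitate, in the coarse setting, the proof that Kazhdan's property (T) obstructs surjectivity of the maximal Baum--Connes assembly map: geometric property (T) lets us build a \emph{Kazhdan projection} inside $C^*_{max}(X)$ whose $K$-theory class is not hit by $\mu$. First, since the spectrum of $\Delta$ in $C^*_{max}(X)$ lies in $\{0\}\cup[c,2]$, the point $0$ is isolated in it, so for any continuous $\chi\colon[0,2]\to[0,1]$ that equals $1$ near $0$ and $0$ on $[c,2]$ the element $p:=\chi(\Delta)$ is a projection in the unitization of $C^*_{max}(X)$ and $q_0:=1-p=(1-\chi)(\Delta)$ is a projection in $C^*_{max}(X)$ itself (because $(1-\chi)(0)=0$), giving a class $[q_0]\in K_0(C^*_{max}(X))$ with $[q_0]=[1]-[p]$ in $K_0$ of the unitization. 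Writing $P^{(n)}$ for the rank one projection onto the constants in $\ell^2(G_n)$ and $q$ for the chosen rank one projection in $\mathcal{K}$, one checks that $q_0=\bigoplus_n(1-P^{(n)})\otimes q$, that the Kazhdan projection $p_K:=\bigoplus_n P^{(n)}\otimes q$ also lies in $C^*_{max}(X)$, and that $p_K=(1\otimes q)-q_0$, so $[1\otimes q]=[p_K]+[q_0]$. To see that $[q_0]$ is nonzero, note that $(1\otimes q)C^*_{max}(X)(1\otimes q)$ is a corner isomorphic to a maximal uniform Roe algebra of $X$, on which averaging matrix coefficients over each $G_n$ and taking a limit along a free ultrafilter $\omega$ on $\N$ defines a bounded trace $\tau$ (bounded because $|\tau(T)|\le\|T\|$), and $\tau(q_0)=\lim_{n\to\omega}(1-1/|G_n|)=1$; since $C^*_{max}(X)$ is stable and $1\otimes q$ is full, the corner inclusion is a $K$-equivalence and hence $[q_0]\neq 0$ in $K_0(C^*_{max}(X))$.

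It remains to show $[q_0]$ is not in the image of $\mu$. Recall that $\mu$ factors through the localization algebra \cite{Yu:1997kb}, as $\mu=e_*\colon \lim_RK_*(P_R(X))\cong K_*(C^*_{L,max}(X))\to K_*(C^*_{max}(X))$ with $e$ evaluation at $t=0$, so every class in the image of $\mu$ is represented by (a matrix of) elements of $C^*_{max}(X)$ of \emph{finite} propagation; in particular $1\otimes q$, which has propagation $0$, lies in the image of $\mu$ via the constant path, so it is equivalent to show $[p_K]\notin\mathrm{im}\,\mu$. The projection $p_K$, by contrast, has propagation $\mathrm{diam}(G_n)\to\infty$ in an essential way. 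To turn this into a proof I would pass to the six-term exact sequence of $0\to\mathcal{K}(l^2(X,\Hi))\to C^*_{max}(X)\to Q\to 0$ and the parallel sequence for the localization algebras, and compare them: because the $G_n$ are finite, the assembly map for the ideal part is assembled from the individual assembly maps of the finite graphs $G_n$, which factor through the $K$-trivial $C^*$-algebras $C^*(G_n)\cong\mathcal{B}(\ell^2(G_n,\Hi))$, and it then remains to check that the image of $p_K$ in $K_*(Q)$ is nonzero and escapes the induced map on the quotients. The required analysis of Roe algebras of disjoint unions of finite spaces is supplied by the work of Oyono-Oyono and the second author \cite{Oyono-Oyono:2009ua}.

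The hard part is this last step. For property (T) \emph{groups} one concludes immediately, because the Kazhdan projection vanishes in the reduced group $C^*$-algebra and the reduced assembly map is (split) injective; but for a space of graphs the Kazhdan projection survives in the reduced Roe algebra $C^*(X)$, so comparison with the reduced coarse assembly map gives nothing and the naive analogue of the group argument fails. One therefore genuinely has to exploit the finiteness of the $G_n$ together with the ideal structure above to see that the "spread out" Kazhdan class cannot be represented by the finite-propagation data that the localization-algebra assembly map produces. Everything else — the existence and explicit form of the projection, the relation $[1\otimes q]=[p_K]+[q_0]$, and the detection of nontriviality by the trace $\tau$ — is routine once geometric property (T) is in hand.
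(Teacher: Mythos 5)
Your set-up of the Kazhdan projection is correct and matches the paper's in spirit: geometric property (T) isolates $0$ in the spectrum of $\Delta$, so $p=\lim_{t\to\infty}e^{-t\Delta}$ (equivalently $\chi(\Delta)$ for a suitable bump function $\chi$) is a projection in $C^*_{max}(X)$, and you correctly observe that the naive group-case argument (vanishing of the Kazhdan projection in the reduced algebra plus injectivity of the reduced assembly map) does not transfer, since $p$ survives in $C^*(X)$. But you then explicitly defer the entire content of the theorem: you write ``it then remains to check that the image of $p_K$ in $K_*(Q)$ is nonzero and escapes the induced map on the quotients'' and call this ``the hard part.'' That unproved step \emph{is} the theorem, so what you have is a correct framing of the problem rather than a proof.

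The paper closes exactly this gap with two ingredients you do not invoke. First, it uses the $*$-homomorphism $\phi:C^*_{max}(X)\to\prod_nC^*(\wgn)^{\Ga_n}/\oplus_nC^*(\wgn)^{\Ga_n}$ from \cite[Corollary 3.9]{Willett:2010ud} -- a map to (a quotient of) the Roe algebras of the \emph{universal covers} $\wgn$, not the quotient $Q=C^*_{max}(X)/\mathcal{K}(l^2(X,\Hi))$ you have in mind. Because the trees $\wgn$ are non-amenable, the lifted Laplacians $\widetilde{\Delta}_n$ have a spectral gap at $0$, so the lift of $p$ vanishes and $\phi(p)=0$ (this is \cite[Lemma 5.6]{Willett:2010ud}). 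Second, the Atiyah $\Ga$-index theorem (\cite[Theorem 6.4]{Willett:2010ud}) is used, exactly as in \cite[Lemma 6.5]{Willett:2010ud}, to show that if $[p]=\mu(x)$ for some geometric cycle $x$, then $\phi_*[p]$ would have to be detected by the $\Ga_n$-equivariant indices on the covers, contradicting $\phi(p)=0$. Your proposal to push $p_K$ into $K_*(Q)$ and use a trace shows $[q_0]\neq 0$ in $K_0(C^*_{max}(X))$, but that by itself says nothing about the image of $\mu$; detecting nontriviality is not the same as showing a class escapes the range of assembly, and the ultrafilter trace you build is in any case not obviously well defined on the image of $\mu$ after stabilization. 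You would need to supply the covering-space comparison and the $\Ga$-index argument (or a genuine substitute) before this becomes a proof.
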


\begin{proof}
Let $p=\lim_{t\to\infty}e^{-t\Delta}$ be the spectral projection associated to $0\in\text{spectrum}(\Delta)$ in $C^*_{max}(X)$; the limit exists by geometric property (T).  The result of \cite[Corollary 3.9]{Willett:2010ud} is that there is a $*$-homomorphism
$$
\phi:C^*_{max}(X)\to\frac{\prod_nC^*(\wgn)^{\Ga_n}}{\oplus_nC^*(\wgn)^{\Ga_n}}
$$
which is such that $\phi(p)=0$ by the proof of \cite[Lemma 5.6]{Willett:2010ud}.   Essentially the same proof as in \cite[Lemma 6.5]{Willett:2010ud} using the Atiyah $\Ga$-index theorem (\cite[Theorem 6.4]{Willett:2010ud}) shows that $[p]\in K_0(C^*(X))$ is not in the image of the maximal coarse assembly map.
\end{proof}

We explicitly note the following corollaries, the last two of which are purely geometric.  We do not, however, know of any way to prove them that does not use operator K-theory.

\begin{geotc}\label{geotc} 
\begin{verbatim}\end{verbatim}
\begin{enumerate}[(i)]
\item Bounded geometry expanders with girth tending to infinity do not have geometric property (T).
\item No bounded geometry expander with girth tending to infinity is coarsely equivalent to a Margulis-type expander constructed from a property (T) group.
\item No Margulis-type expander constructed from a property (T) group can be coarsely embedded in a group using the techniques of Gromov \cite{Gromov:2003gf} and Arzhantseva--Delzant \cite{Arzhantseva:2008bv} (at least not without somehow dropping the girth assumption in their work). \qed
\end{enumerate}
\end{geotc}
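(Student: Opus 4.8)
The plan is to derive all three parts from results already in hand---Theorem \ref{maxthe}, Theorem \ref{geotthe}, and Lemma \ref{geotex}---without introducing anything new. Part (i) is then immediate: a bounded geometry expander with girth tending to infinity is in particular a space of graphs with large girth and bounded geometry, so Theorem \ref{maxthe} applies and its maximal coarse assembly map $\mu$ is an isomorphism, hence surjective; but Theorem \ref{geotthe} states that $\mu$ fails to be surjective for any space of graphs with geometric property (T). These two facts are incompatible, so such an expander cannot have geometric property (T).

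For part (ii) I would argue by contradiction: suppose a bounded geometry expander $X$ with girth tending to infinity is coarsely equivalent to a Margulis-type expander $Y=\sqcup\Ga/\Ga_n$ built from a property (T) group $\Ga$. By Lemma \ref{geotex}, $Y$ has geometric property (T), so by Theorem \ref{geotthe} its maximal coarse assembly map is not surjective, hence not an isomorphism; by Theorem \ref{maxthe}, the maximal coarse assembly map for $X$ is an isomorphism. This contradicts the fact that ``the maximal coarse assembly map is an isomorphism'' is a coarse invariant of bounded geometry metric spaces: the source $\lim_{R\to\infty}K_*(P_R(\cdot))$ is the coarse $K$-homology and so is visibly a coarse invariant, the target $K_*(C^*_{max}(\cdot))$ is a coarse invariant for bounded geometry spaces, and the assembly map is natural with respect to coarse maps. (If one grants that geometric property (T) is itself a coarse invariant---plausible, but, as remarked above, not verified here---then (ii) drops straight out of (i); the argument just given sidesteps that gap.)

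For part (iii) I would use that the graphical small cancellation machinery of Gromov \cite{Gromov:2003gf} and Arzhantseva--Delzant \cite{Arzhantseva:2008bv} takes as input a sequence of finite graphs whose girth tends to infinity---this hypothesis being exactly what makes the small cancellation condition available---and outputs a finitely generated group into whose Cayley graph the sequence coarsely embeds. Hence, were such a construction to apply to a Margulis-type expander built from a property (T) group, that expander would in particular have girth tending to infinity, whence by part (i) it could not have geometric property (T), contradicting Lemma \ref{geotex}. (One can also see directly that such an expander never has girth tending to infinity: a property (T) group is never free, so its Cayley graph has some finite girth $g_0$, and then every quotient graph $\Ga/\Ga_n$ has girth at most $g_0$.)

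The one genuinely non-formal ingredient is the coarse invariance of ``the maximal coarse assembly map is an isomorphism'' used in part (ii); for the reduced Roe algebra this is classical, and for the maximal Roe algebra it follows in the same way once one knows $K_*(C^*_{max}(\cdot))$ is a coarse invariant for bounded geometry spaces, so I expect this to be the only point requiring care. Parts (i) and (iii) are otherwise purely formal consequences of the main theorems, with (iii) unavoidably as informal as the phrase ``using the techniques of Gromov and Arzhantseva--Delzant'' itself.
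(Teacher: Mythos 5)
Your proof is correct and matches the argument the paper tacitly intends (the corollary is stated with no written proof, so the argument is meant to be immediate from Theorem \ref{maxthe}, Theorem \ref{geotthe}, and Lemma \ref{geotex}). Part (i) is exactly the intended juxtaposition of Theorems \ref{maxthe} and \ref{geotthe}. For part (ii) you correctly identify that one should not lean on coarse invariance of geometric property (T) itself (which the paper explicitly declines to verify) and instead route the contradiction through coarse invariance of the statement ``$\mu$ is an isomorphism''; this does require knowing that $K_*(C^*_{max}(\cdot))$, like its reduced counterpart, is a coarse invariant of bounded geometry spaces with assembly natural for coarse maps, and you are right to flag this as the one input needing a citation or a short argument, but it is the standard fact being invoked. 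For part (iii), your first route (Gromov/Arzhantseva--Delzant requires girth tending to infinity as input, contradicting (i) and Lemma \ref{geotex}) is the paper's intended reading. Your parenthetical observation -- that a property (T) group is never free, so its Cayley graph has some finite girth $g_0$, and every quotient $\Ga/\Ga_n$ then has girth at most $g_0$ -- is a correct and genuinely elementary proof of (iii) that avoids operator $K$-theory entirely, and is in mild tension with the paper's remark that the authors ``do not know of any way to prove them that does not use operator $K$-theory''; your version of (iii) is strictly simpler, though it does not help with (ii), where coarse equivalence destroys any control on girth and the $K$-theoretic argument seems genuinely necessary.
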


We conclude this section with some problems that seem interesting.

\begin{gtprob}\label{gtprob}
Find purely geometric conditions that guarantee geometric property (T).  Develop geometric property (T), or possibly some variant, for spaces other than unions of finite graphs.
\end{gtprob} 

A good geometric criterion for geometric property (T) (possibly modeled after work of \.{Z}uk on spectral criteria for property (T) itself \cite{Zuk:2003jf}) may also provide an approach to the following problem.  Note in this regard that `random graphs' seem to have `small girth' (see for example the remark on \cite[page 22]{Arzhantseva:2008bv} in this regard).

\begin{gtprob2}\label{gtprob2}
Is geometric property (T) `generic' among all spaces of graphs $X=\sqcup G_n$?
\end{gtprob2} 

Finally, part (iii) of Corollary \ref{geotc} makes the following question very natural.  

\begin{gtprob3}\label{gtprob3}
Can an expander with geometric property (T) be coarsely embedded in a countable discrete group?
\end{gtprob3}

\bibliography{Generalbib}

\end{document}